%
\documentclass{amsart}
\pdfoutput=1
\setcounter{tocdepth}{1}

\usepackage{float}
\floatstyle{boxed}
\restylefloat{figure}
\floatplacement{figure}{b}

\usepackage{mathtools}
\usepackage{amssymb}
\usepackage{mathrsfs}
\usepackage[all]{xy}
\UseComputerModernTips
\CompileMatrices
\usepackage{enumitem}
\usepackage{graphicx} 
\usepackage{tikz}
\usepackage[colorlinks,allcolors=blue]{hyperref}
\usepackage{ifthen}
\usepackage{latexsym}
\allowdisplaybreaks
%
\numberwithin{equation}{section}
%
\newtheorem{theorem}[equation]{Theorem}

\newtheorem{proposition}[equation]{Proposition}
\newtheorem{corollary}[equation]{Corollary}
%
\newtheorem*{namedthm}{Theorem \namedthmname}
\newcounter{namedthm}

\makeatletter
\newenvironment{namedtheorem}[1]
  {\def\namedthmname{#1}%
   \refstepcounter{namedthm}%
   \namedthm\def\@currentlabel{#1}}
  {\endnamedthm}
\makeatother
%
\theoremstyle{definition}
\newtheorem{definition}[equation]{Definition}

%
\theoremstyle{remark}
\newtheorem{remark}[equation]{Remark}

\hyphenation{pa-ra-met-rized}
\renewcommand{\phi}{\varphi}
\DeclareMathSymbol{\boxprod}{\mathbin}{AMSa}{"03} 
\DeclareMathSymbol{\mixprod}{\mathbin}{AMSa}{"4F} 

\newcommand{\dirsum}{\oplus}

\newcommand{\disjunion}{\sqcup}

\newcommand{\dual}{^\vee}

\newcommand{\homeo}{\approx}

\newcommand{\iso}{\cong}

\newcommand{\Mackey}[1]{\underline{#1}\vphantom{#1}}

\newcommand{\smsh}{\wedge}

\newcommand{\susp}{\Sigma}
\newcommand{\tensor}{\otimes}

\newcommand{\C}{{\mathbb C}}

\newcommand{\PP}{\mathbb{P}}

\newcommand{\R}{{\mathbb R}}

\newcommand{\sX}{{\mathcal X}}

\newcommand{\Z}{\mathbb{Z}}
%
%

%

%

%

\newcommand{\rels}[1]{\left\langle #1 \right\rangle}
\newcommand{\eval}[2]{\left\langle #1, #2 \right\rangle}

\DeclareMathOperator{\Hom}{Hom}

\newcommand{\ROev}{RO_{\mathrm{e}}}
\newcommand{\GG}{{C_2}}
%
\newcommand{\tensorS}{\tensor_{\HS}}
%

\newcommand{\HS}{{\mathbb H}}

\newcommand{\tE}{\tilde E}

\newcommand{\cw}{c_{\omega}}
\newcommand{\cxw}{c_{\chiw}}
\newcommand{\cl}{c_{\lambda}}
\newcommand{\cxl}{c_{\chi\lambda}}

\newcommand{\cwd}[1][]{\widehat{c}_{\omega}^{\ifthenelse{\equal{#1}{}}{}{{\:#1}}}}
\newcommand{\cxwd}[1][]{\widehat{c}_{\chi\omega}^{\ifthenelse{\equal{#1}{}}{}{{\:#1}}}}
\newcommand{\cld}[1][]{\widehat{c}_{\lambda}^{\ifthenelse{\equal{#1}{}}{}{{\:#1}}}}
\newcommand{\cxld}[1][]{\widehat{c}_{\chi\lambda}^{\ifthenelse{\equal{#1}{}}{}{{\:#1}}}}

\newcommand{\clod}[1][]{\widehat{c}_{\omega_1}^{\ifthenelse{\equal{#1}{}}{}{{\:#1}}}}
\newcommand{\cxlod}[1][]{\widehat{c}_{\chi\omega_1}^{\ifthenelse{\equal{#1}{}}{}{{\:#1}}}}
\newcommand{\cltd}[1][]{\widehat{c}_{\omega_2}^{\ifthenelse{\equal{#1}{}}{}{{\:#1}}}}
\newcommand{\cxltd}[1][]{\widehat{c}_{\chi\omega_2}^{\ifthenelse{\equal{#1}{}}{}{{\:#1}}}}
\newcommand{\cltensd}[1][]{\widehat{c}_{\omega_1\tensor\omega_2}^{\ifthenelse{\equal{#1}{}}{}{{\:#1}}}}
\newcommand{\cxltensd}[1][]{\widehat{c}_{\chi\omega_1\tensor\omega_2}^{\ifthenelse{\equal{#1}{}}{}{{\:#1}}}}
\newcommand{\cd}[1][]{\widehat{c}^{\ifthenelse{\equal{#1}{}}{}{{\:#1}}}}
%
\newcommand{\Cpq}[2]{\C^{#1+#2\sigma}}
\newcommand{\Cq}[1]{\C^{#1\sigma}}
\newcommand{\Xpq}[2]{\PP(\Cpq{#1}{#2})}
\newcommand{\Xp}[1]{\PP(\C^{#1})}
\newcommand{\Xq}[1]{\PP(\Cq{#1})}

\newcommand{\chiw}{\chi\omega}
\DeclareMathOperator{\Gr}{Gr}
\DeclareMathOperator{\grad}{grad}
\newcommand{\gr}{\Diamond}      
\newcommand{\ext}{\mathsf{\Lambda}}     
\begin{document}

\title
    {The $\GG$-equivariant ordinary cohomology of $BU(2)$}

\author{Steven R. Costenoble}
\address{Steven R. Costenoble\\Department of Mathematics\\Hofstra University\\
  Hempstead, NY 11549, USA}
\email{Steven.R.Costenoble@Hofstra.edu}
\author{Thomas Hudson}
\address{Thomas Hudson, College of Transdisciplinary Studies, DGIST, 
Daegu, 42988, Republic of Korea}
\email{hudson@dgist.ac.kr}

\keywords{Equivariant cohomology, equivariant characteristic classes, grassmannian, characteristic numbers}
\subjclass[2020]{Primary 55N91;
Secondary 14M15, 14N15, 55R40, 55R91, 57R20, 57R85}
%
%
\abstract
We calculate the ordinary $\GG$-cohomology, with Burnside ring coefficients, of
$BU(2)$, the classifying space for $\GG$-equivariant complex 2-plane bundles,
using an extended grading that allows us to capture a more natural set of generators.
This allows us to define characteristic classes for such bundles.
Combined with earlier calculations, it also allows us to define characteristic numbers
for equivariant complex lines and surfaces and we give some sample computations.
\endabstract
\maketitle
\tableofcontents

\section{Introduction}

In nonequivariant algebraic topology, characteristic classes are fundamental tools
in the study of vector bundles and manifolds. 
Among these are the Chern classes, invariants of complex vector bundles, 
whose definition is closely linked to the computation of the cohomology of the classifying spaces $BU(n)$, 
the infinite Grassmannians of $n$-dimensional planes in $\C^\infty$, also denoted $\Gr_n(\C^\infty)$. 
In this classical setting, we have
\begin{equation}\label{classicBUn}
H^*(BU(n)_+;\Z)\iso \Z[c_1,...,c_n]\,,
\end{equation}
where the generator $c_i$ lives in grading $2i$ and is the $i$th Chern class of the tautological bundle $\omega(n)$ over $BU(n)$. 
For any rank $n$ bundle $E\to B$, the Chern classes of $E$ are then obtained by pulling back the generators 
along the classifying map $B\to BU(n)$.

A natural question to ask is how (\ref{classicBUn}) has to be modified when one considers 
topological spaces and vector bundles endowed with the action of a group $G$. 
In other words, how can we define $G$-equivariant characteristic classes? 
Inherent in this question is another: In what equivariant cohomology theory should these classes live? 

Our opinion is that that the natural environment for $G$-equivariant characteristic classes
is the equivariant ordinary cohomology with expanded grading
developed for this purpose by the first author and Stefan Waner in  \cite{CostenobleWanerBook}.
Before discussing the reasons behind this choice, we present our main theorem, which shows how (\ref{classicBUn})
translates to equivariant ordinary cohomology in the specific case of $BU(n)$ for $n=2$ and $G=\GG$. 
The case $n=1$ was studied in \cite{Co:InfinitePublished}, in which the first author, employing the Euler classes defined in \cite{CostenobleWanerBook}, described the characteristic classes of $\GG$-equivariant line bundles.

In order to state our main result we need to introduce a bit of notation. 
We denote by $\C$ the trivial complex representation of $\GG$ and by $\C^\sigma$ the sign representation. 
As a model of $BU(2)$ we shall use $\Gr_2(\C^\infty\oplus \C^{\infty\sigma})$, 
the Grassmannian of $2$-dimensional complex planes in $\C^\infty\oplus \C^{\infty\sigma}$, with the action of $\GG$ induced
by its action on $\C^\infty\oplus \C^{\infty\sigma}$. 
Over $BU(2)$ we have $\omega$, the tautological vector bundle of rank 2,  and its determinant line bundle $\lambda:=\ext^2 \omega$. 
Both can be tensored with $\C^\sigma$, giving bundles $\chi\omega$ and $\chi\lambda$, respectively. 
For any bundle $V$, we will denote by $c_V$ its Euler class. 
Finally, we denote by $\HS$ the $RO(\GG)$-graded equivariant ordinary cohomology of a point with Burnside ring coefficients
and by $ H_\GG^\gr(B U(2)_+)$ 
the cohomology of $BU(2)$ graded on $RO(\Pi BU(2)) \iso \Z^4$.
(See Appendix~\ref{app:ordinarycohomology} for more about $\HS$ and $H_\GG^\gr$.)

The following is our $C_2$-equivariant analogue of (\ref{classicBUn}) when $n=2$. 

\vspace{0.2 cm}

\begin{namedtheorem}{A}\label{thm:A}
As a module, $ H_\GG^\gr(B U(2)_+)$ is free over $\HS$, and as commutative algebra we have
\begin{equation}\label{eqBU2}
H_\GG^\gr(B U(2)_+)\iso \HS[\zeta_0,\zeta_1,\zeta_2,\cl,\cxl,\cw,\cxw]/I\,,
\end{equation}
where $I$ is the ideal generated by the relations
\begin{align*}
	\zeta_0\zeta_1\zeta_2 &= \xi \\
	\zeta_1 \cxl  &= (1-\kappa)\zeta_0\zeta_2 \cl + e^2 \\
	\zeta_2^2 \cxw &= (1-\kappa)\zeta_0^2 \cw  + e^2 \cxl \,.
\end{align*}
\qed
\end{namedtheorem}

\vspace{0.2 cm}
Thus, for any $\GG$-equivariant complex 2-plane bundle $E\to B$, we have characteristic classes
$\cw(E)$, $\cxw(E)$, and so on, in the cohomology of $B$,
given by pulling back elements of the cohomology of $BU(2)$ along a classifying map
for $E$.
The elements $\zeta_i$ are elements that come from the component structure of the fixed points $BU(2)^\GG$
and are described in detail in \cite[\S16]{Co:InfinitePublished}, while $\xi$, $\kappa$, and $e$ are elements of $\HS$.

Note that the calculation (\ref{eqBU2}) specializes to (\ref{classicBUn}) on forgetting the $\GG$-action,
since $\zeta_0$, $\zeta_1$ and $\zeta_2$ restrict to 1 nonequivariantly, the classes $\cl$ and $\cxl$ restrict to $c_1$, and
$\cw$ and $\cxw$ restrict to $c_2$. 
The three relations become trivial because $\xi$ restricts to 1 while $\kappa$ and $e^2$ restrict to 0.

One important use of nonequivariant characteristic classes is in defining characteristic numbers of manifolds,
which are invariants of bordism.
As an example of a possible application of our equivariant characteristic classes, we shall use Theorem~\ref{thm:A},
together with earlier calculations in \cite{CHTFiniteProjSpace},
to compute characteristic numbers for some complex lines and surfaces,
showing that, at least in these cases, 
these equivariant characteristic numbers are sufficient to distinguish the manifolds' equivariant bordism classes.

We promised earlier to motivate our choice of equivariant ordinary cohomology with expanded grading
as the natural framework for the definition of characteristic classes of vector bundles.
For this, it is probably useful to say a few words about the alternative choices that we have decided not to use.

The best known equivariant generalization of ordinary cohomology is probably Borel cohomology,
which has many useful features, including ease of calculation. For example, with $BH$ denoting Borel cohomology, we have that
\begin{align*}
    BH_\GG^*(BU(2)_+;\Z) &\iso BH_\GG^*(S^0;\Z)\tensor H^*(BU(2)_+;\Z) \\
    &\iso H^*(\GG;\Z)\tensor H^*(BU(2)_+;\Z).
\end{align*}
But this is too blunt an instrument for our purposes.
For example, in \cite{CHTAlgebraic}, we gave an equivariant version of B\'ezout's theorem using Borel cohomology
and showed that it does not give us any information about fixed sets, whereas 
the Euler class computed in our preferred ordinary cohomology encodes information
that completely determines both the cardinality and location of the fixed sets of intersections of hypersurfaces
in equivariant projective space.
The underlying problem is that Borel cohomology is an ``invariant'' theory, using the term
introduced by Peter May in \cite{May:Borel}. That means that, if $f\colon X\to Y$ is a $G$-map
such that $f$ is a nonequivariant equivalence, not necessarily a $G$-equivalence, 
then $f$ induces an isomorphism in Borel cohomology.
May gave a complete description of the characteristic classes in such theories,
including the general version of the calculation above, with the conclusion that
``no such theory is powerful enough to support a very useful theory of characteristic classes.''

Another option is $RO(G)$-graded ordinary (Bredon) cohomology, 
which was recently used by Hill, Hopkins, and Ravenel in \cite{HHRKervaire} to prove the Kervaire invariant one conjecture. 
Calculations related to the one we do here have been done in the $RO(G)$-graded theory,
notably those by Lewis \cite{LewisCP}; more recent examples include work by Dugger
\cite{DuggerGrass}, Kronholm \cite{KronholmSerre}, Hogle \cite{Hogle} and Hazel \cite{HazelFundamental,HazelSurfaces}.
But the most basic characteristic class we would expect to have is the Euler class, and the only vector bundles with Euler
classes in $RO(G)$-graded ordinary cohomology are those for which there is a single representation $V$ of $G$
such that every fiber of the bundle is a copy of $V$ (restricted to the isotropy group of the point of the base
over which the fiber sits).
Thus, many interesting vector bundles, including the tautological bundle over $BU(2)$,
fail to have Euler classes in this theory.
For this reason, $RO(G)$-graded ordinary cohomology
is also too limited to host a satisfactory theory of characteristic classes and needs to be extended. 

That was one of the reasons behind \cite{CostenobleWanerBook}, in which the first author and Stefan Waner developed 
equivariant ordinary cohomology with an extended grading.
From now on, when we refer to ``equivariant ordinary cohomology,'' this is the theory we mean.
The extended grading is exactly what is needed to allow us to define Euler classes for all vector bundles.

The extension of the grading comes at a considerable increase in complexity. 
$RO(\GG)$-graded cohomology has a grading with two indices regardless of the space involved, 
because $RO(\GG) \iso \Z^2$, so is often referred to as a \emph{bi-graded} theory. 
The extended grading, though, depends on the space itself and more specifically on the topological features of its fixed points. 
The more complex the structure of the fixed points, the more complicated the grading will become. 
There are, however, considerable advantages to be reaped in compensation and the resulting
calculations may actually end up being simpler.

We invite the skeptical reader to compare Lewis's remarkable (and yet quite mysterious) computations of the 
$RO(\GG)$-graded cohomology of finite projective spaces in \cite{LewisCP} to the
significantly simpler description of their equivariant ordinary cohomology with extended grading 
we (together with Sean Tilson) obtained in \cite{CHTFiniteProjSpace}. 
In particular, \cite[Section 5.2]{CHTFiniteProjSpace} gives a comparatively more homogeneous description
of the $RO(\GG)$-graded part.
The $RO(\GG)$-graded calculation is just a slice of the complete picture, one that does not contain
the natural generators, hence is more complicated to describe without the proper context.

Another advantage is the existence of push-forward maps, which allowed us in \cite{CHTAlgebraic} and \cite{CH:geometric} 
to provide a geometric interpretation of what would otherwise have been just an algebraic picture.
In particular, we showed that every additive generator of the cohomology of an equivariant projective space
could be viewed as the fundamental class of singular manifold. 
As a byproduct, we obtained a refinement of the classical B\'ezout theorem for the intersection of an arbitrary number of algebraic hypersurfaces.

Another comment about our approach:
Nonequivariantly, the cohomology of $BU(2)$ is often derived from the cohomology of $BT^2$, which is obtained
easily from the cohomology of $BT^1 = BU(1)$. Equivariantly, the relations between these cohomologies are
more complicated, so here we calculate the cohomology of $BU(2)$ directly.
We will explore the relations between these cohomologies in \cite{CH:bt2},
which will concentrate on $BT^2$.

In future work we hope to use the calculations done here to look at the cohomology of finite
Grassmannians. In \cite{CHTFiniteProjSpace}, we used the cohomology of $BU(1)$ in calculating
the cohomologies of finite projective spaces and showed that they are not simply quotients
of the cohomology of the infinite case.
Preliminary work on finite Grassmannians shows that the relation between their cohomologies
and the cohomology of $BU(2)$ may be yet more complicated.

Of course, we would also like to know the cohomology of $BU(n)$ for every $n$,
but we do not yet have the machinery to tackle that general problem.
One difficulty is that, in $BU(1)$ and $BU(2)$, just as nonequivariantly,
the generators can be understood as Euler classes, so we have ready candidates.
However, even nonequivariantly, the Chern classes $c_i$ for $1 < i < n$ are not
Euler classes of bundles over $BU(n)$, so we will need other ways of identifying
generators for the equivariant cohomology when $n>2$.

We will assume some familiarity with ordinary cohomology graded on representations
of the fundamental groupoid of a space, but Appendix~\ref{app:ordinarycohomology} contains
a brief summary and references;
see particularly the recent guide by Beaudry et al., \cite{Beaudry:Guide}.
Because we will need to use rings graded on several different grading groups, we adopt the following notation:
If $A$ is a ring graded on $R$, we write $A^R$ to denote $A$ with that grading.
Write $RO(\Pi BU(2))$ for the representation ring of the fundamental groupoid of $BU(2)$.
Although our usual notation for the cohomology of $BU(2)$ graded on $RO(\Pi BU(2))$ is
$H_\GG^{RO(\Pi BU(2))}(BU(2)_+)$, we will simplify and write $H_\GG^\gr(BU(2)_+)$,
and we will use a superscript $\gr$ throughout this paper to indicate grading specifically on $RO(\Pi BU(2))$.
Where other grading is being used, we will give it explicitly.

The paper is structured as follows. 
We begin, in the following section, by proving that, in ``even'' gradings,
the cohomology of $BU(n)$ injects into the cohomology of its fixed sets,
which we compute in \S\ref{sec:fixed sets}.
Computations are easier there and we use this to verify that the relations listed
in Theorem~\ref{thm:A} actually hold.
In \S\ref{sec:freeness}, we show that the algebra proposed in Theorem~\ref{thm:A}
is free as a module over $\HS$, with tedious parts of the proof relegated to Appendix~\ref{app:resolution}.
Because we need to relate several graded rings with different grading groups,
in \S\ref{sec:change of grading} we briefly discuss what happens when we perform a change of grading to make them match,
which, in our situations, amounts to adjoining some new invertible elements.
We finally prove Theorem~\ref{thm:A} in \S\ref{sec:mainresult},
restated as Theorem~\ref{thm:main}.
The method of proof is one we used in \cite{CHTFiniteProjSpace}, based on a result that
says that, if a set of elements restricts to a basis both nonequivariantly and on taking fixed points,
then it is a basis over $\HS$, an approach suggested to us by the referee of that paper.
We apply this to the elements from a basis of the algebra that we proved free in \S\ref{sec:freeness}.
Following that, in \S\ref{sec:bases}, we give several examples of how the basis elements
are arranged and how they relate to nonequivariant bases.
In \S\ref{sec:units}, we find the units in the cohomology ring of $BU(2)$ and use them in
calculating the Euler classes of duals of bundles.
Finally, in \S\ref{sec:char numbers}, we give some example computations of characteristic numbers
of complex lines and surfaces, combining the computations from this paper with those
from~\cite{CHTFiniteProjSpace}.

\subsection*{Acknowledgements} 
Both authors would like to thank Sean Tilson for his help
in laying the foundations of this work. The first author thanks Hofstra University for released time to work on this project.
The second author was partially supported by the National Research Foundation of Korea (NRF) grant funded by the Korea government (MSIT) (No. RS-2024-00414849).

\section{A useful monomorphism}

Let $E\GG$ be the universal free $\GG$-space and let $\tE\GG$ be the based cofiber in the sequence
\[
    (E\GG)_+\to S^0 \to \tE\GG
\]
where the first map is induced by the projection $E\GG\to *$ to a point.
The $RO(\GG)$-graded cohomologies of $E\GG$ and $\tE\GG$ were computed and used
extensively in \cite{Co:InfinitePublished}.
The following result applies to all of the $\GG$-spaces $BU(n)$ and $BT^n$
and was used in \cite{Co:InfinitePublished} for $BU(1)$.

\begin{theorem}\label{thm:diagram}
Suppose that $B$ is a $\GG$-space and suppose that $B^\GG$ has a component $B^0$
such that the inclusion $B^0\to B$ is a nonequivariant equivalence. Then we have the
following commutative diagram with exact rows and columns and
with the indicated isomorphisms, monomorphisms, and epimorphisms.
(In this diagram, we write $R = RO(\Pi B)$ for brevity.)
\[
\def\objectstyle{\scriptstyle}
 \xymatrix@C-0.5em{
    H_\GG^R(B_+\smsh_B\tE\GG) \ar[r]^\eta_\iso \ar[d]
     &  H_\GG^R(B_+^\GG\smsh_B\tE\GG) \ar[d] \\
    H_\GG^R(B_+) \ar[r]^\eta \ar[d]
     &  H_\GG^R(B_+^\GG) \ar[r] \ar[d]
     & \susp^{-1} H_\GG^{R}(B/_B B^\GG) \ar[d]^\iso \\
    H_\GG^R(B_+\smsh_B (E\GG)_+) \ar@{>->}[r]^\eta \ar[d]
     &  H_\GG^R(B_+^\GG\smsh_B (E\GG)_+) \ar@{->>}[r] \ar[d]
     & \susp^{-1} H_\GG^{R}(B/_B B^\GG\smsh_B (E\GG)_+) \\
   \susp^{-1} H_\GG^{R}(B_+\smsh_B\tE\GG) \ar[r]^\eta_\iso
     & \susp^{-1} H_\GG^{R}(B_+^\GG\smsh_B\tE\GG)
  }
\]
\end{theorem}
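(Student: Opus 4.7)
The plan is to assemble the diagram from two perpendicular cofiber sequences and then verify each labeled structural property by identifying an appropriate vanishing term. The rows are obtained by smashing $B_+$, $B^\GG_+$, and $B\PQ{B}B^\GG$ (fiberwise over $B$) with the cofiber sequence $(E\GG)_+ \to S^0 \to \tE\GG$, while the columns come from smashing $(E\GG)_+$, $S^0$, and $\tE\GG$ with the cofiber sequence $B^\GG_+ \to B_+ \to B\PQ{B}B^\GG$. Commutativity is then automatic from naturality of smash products and of the induced long exact sequences in cohomology.

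The key geometric observation is that $B\PQ{B}B^\GG$ has trivial $\GG$-fixed points, since the quotient collapses all of $B^\GG$ to the basepoint. For any based $\GG$-space $X$ with $X^\GG = \{*\}$, the equivariant Whitehead theorem gives that $X \smsh (E\GG)_+ \to X$ is a $\GG$-equivalence (it is a nonequivariant equivalence and agrees on fixed points), so equivalently $X \smsh \tE\GG$ is $\GG$-contractible. Applied to $X = B\PQ{B}B^\GG$, this immediately yields the isomorphisms in the top and bottom rows, since the cofiber of $B^\GG_+ \smsh \tE\GG \to B_+ \smsh \tE\GG$ is $(B\PQ{B}B^\GG) \smsh \tE\GG$, and it yields the right-hand vertical isomorphism in the middle row, since the cofiber of $(B\PQ{B}B^\GG) \smsh (E\GG)_+ \to B\PQ{B}B^\GG$ is the same contractible space.

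For the monomorphism in the third row I would use the hypothesis on the component $B^0$. Since $B^0 \to B$ is a nonequivariant equivalence and both $B^0_+ \smsh (E\GG)_+$ and $B_+ \smsh (E\GG)_+$ have trivial fixed points, the equivariant Whitehead theorem again promotes this to a $\GG$-equivalence after smashing with $(E\GG)_+$. Factoring through the component inclusion $B^0 \subseteq B^\GG$ exhibits this $\GG$-equivalence as a composite $B^0_+ \smsh (E\GG)_+ \to B^\GG_+ \smsh (E\GG)_+ \to B_+ \smsh (E\GG)_+$, so in cohomology the map $H_\GG^R(B_+ \smsh_B (E\GG)_+) \to H_\GG^R(B^\GG_+ \smsh_B (E\GG)_+)$ admits a retraction and is thereby a split monomorphism. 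The epimorphism in the third row is then automatic from the long exact sequence associated to $B^\GG_+ \smsh_B (E\GG)_+ \to B_+ \smsh_B (E\GG)_+ \to (B\PQ{B}B^\GG) \smsh_B (E\GG)_+$: injectivity of the following map forces the preceding connecting map to be surjective.

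The principal obstacle is bookkeeping rather than conceptual: the cohomology is graded on $RO(\Pi B)$, so I need to verify that the parametrized smash products $\smsh_B$, the suspension shifts, and the long exact sequences are all interpreted correctly in this extended grading rather than in the $RO(\GG)$-graded setting. Beyond this, the argument is a formal consequence of the two cofiber sequences above combined with the two vanishing observations, together with the equivariant Whitehead theorem.
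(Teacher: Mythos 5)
Your proof is correct and follows essentially the same route as the paper's: the two interlocking families of long exact sequences, the $\GG$-contractibility of $(B\PQ{B}B^\GG)\smsh\tE\GG$ (equivalently, that $B^\GG_+\smsh_B\tE\GG\to B_+\smsh_B\tE\GG$ is an equivalence) to produce the top, bottom, and right-hand isomorphisms, and the splitting through the component $B^0$ to get the split monomorphism and hence the epimorphism in the third row. The only slip is that your opening sentence transposes rows and columns: in the displayed diagram the rows are the long exact sequences of the pair $(B,B^\GG)$ smashed with a fixed term of $(E\GG)_+\to S^0\to\tE\GG$, while the columns come from smashing that cofiber sequence with a fixed one of $B_+$, $B^\GG_+$, $B\PQ{B}B^\GG$.
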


\begin{proof}
The rows are exact because they are parts of the long exact sequences induced by the pair
$(B,B^\GG)$.
The columns are exact because they are parts of the long exact sequences induced by the cofibration
$(E\GG)_+\to S^0\to \tE\GG$.

For any $\GG$-space $B$, the map $B^\GG_+\smsh\tE\GG\to B_+\smsh\tE\GG$ is an equivalence,
hence we get the two horizontal isomorphisms shown at the top and bottom of the diagram. This implies that the missing
top-right and bottom-right corners of the diagram are 0 groups, hence we get the vertical isomorphism
shown on the right.

Because we are assuming that $B^0\to B$ is a nonequivariant equivalence, we have that
$B^0 \times E\GG\to B\times E\GG$ is a $\GG$-equivalence. The inclusion $B^0\to B^\GG$ then
gives us a splitting of the map
\[
    \eta\colon H_\GG^R(B_+\smsh_B (E\GG)_+) \to H_\GG^R(B_+^\GG\smsh_B (E\GG)_+),
\]
showing that the third row of the diagram is a split short exact sequence.
\end{proof}

Let us assume further that $B$ is nonequivariantly simply connected and that all of the components
of $B^\GG$ are simply connected. 
As in Appendix~\ref{app:ordinarycohomology}, if those components are labeled $B^0$, \dots, $B^n$, then we know
that
\[
    RO(\Pi B) = \Z\{1,\sigma, \Omega_0,\ldots, \Omega_n\}/\rels{{\textstyle\sum \Omega_i = 2\sigma-2}}.
\]

\begin{definition}\label{def:even grading}
With $B$ as above,
let $\ROev(\Pi B) \subset RO(\Pi B)$ be the subgroup defined by
\[
    \ROev(\Pi B) = \left\{ a + b\sigma + {\textstyle\sum m_{i}\Omega_{i}} \mid \text{$a$ and $b$ even} \right\}.
\]
We call this the subgroup of \emph{even gradings.}
\end{definition}

\begin{corollary}\label{cor:even degrees}
Suppose that $B$ satisfies the condition of Theorem~\ref{thm:diagram} and that, in addition,
we know that the nonequivariant cohomology of $B^\GG$ is free abelian and concentrated
in even degrees. Then
\[
    \eta\colon H_\GG^{\ROev(\Pi B)}(B_+) \to H_\GG^{\ROev(\Pi B)}(B^\GG_+)
\]
is a monomorphism.
\end{corollary}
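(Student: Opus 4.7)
The plan is a diagram chase in the diagram of Theorem~\ref{thm:diagram}, fed by one extra fact: ordinary cohomology in odd degrees vanishes on the product $B^\GG \times B\GG$ when the hypotheses hold.

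Starting with $x \in H_\GG^R(B_+)$ satisfying $\eta(x) = 0$ and $R \in \ROev(\Pi B)$, the goal is to show $x = 0$. First I would push $x$ down into the Borel row: because the third-row $\eta$ is a monomorphism and the relevant square commutes, the image of $x$ in $H_\GG^R(B_+ \smsh_B (E\GG)_+)$ must vanish. Exactness of the $B$-column (the cofibration $(E\GG)_+ \to S^0 \to \tE\GG$ applied to $B_+$) then produces $y \in H_\GG^R(B_+ \smsh_B \tE\GG)$ mapping vertically to $x$. The top-row isomorphism yields $y' = \eta(y) \in H_\GG^R(B^\GG_+ \smsh_B \tE\GG)$, and commutativity of the top-left square forces the vertical image $w(y')$ in $H_\GG^R(B^\GG_+)$ to equal $\eta(x) = 0$, where $w$ denotes the vertical map in the $B^\GG$-column.

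The crux is then to show that $w$ is injective for $R \in \ROev(\Pi B)$. By exactness of the $B^\GG$-column, $\ker w$ equals the image of the connecting map out of $H_\GG^{R-1}(B^\GG_+ \smsh_B (E\GG)_+)$, so it suffices to show this Borel-cohomology group vanishes. Because $B^\GG \times E\GG$ is $\GG$-free, there is a decomposition over the fixed components $B^j$ of $B^\GG$:
\[
H_\GG^{R-1}(B^\GG_+ \smsh_B (E\GG)_+) \iso \bigoplus_j H^{\dim(R-1)}(B^j \times B\GG;\Z),
\]
with the integer grading on the right read off from the underlying real dimension. A direct computation using $\Omega_j|_{B^j} = 2\sigma - 2$ and $\Omega_i|_{B^j} = 0$ for $i \ne j$ shows that $R = a + b\sigma + \sum_i m_i \Omega_i$ has $\dim R = a + b$ on every component. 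So when $R \in \ROev$, $\dim R$ is even and $\dim(R-1)$ is odd. The hypothesis that each $H^*(B^j;\Z)$ is free abelian in even degrees, together with $H^*(BC_2;\Z)$ being likewise concentrated in even degrees, causes every summand above to vanish by K\"unneth. Thus $\ker w = 0$, hence $y' = 0$, $y = 0$, and $x = 0$.

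The main obstacle is the bookkeeping around the dimension function on $RO(\Pi B)$ and the identification of a Borel-cohomology group with an ordinary nonequivariant cohomology of the fixed-point Borel construction; once those are settled the vanishing is immediate from the hypothesis, and the diagram chase closes.
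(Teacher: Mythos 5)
Your proof is correct and takes essentially the same approach as the paper: both are diagram chases in the diagram of Theorem~\ref{thm:diagram} that reduce the claim to the vanishing of $H_\GG^{R-1}(B^\GG_+\smsh_B(E\GG)_+)$ for $R\in\ROev(\Pi B)$ (equivalently, the vanishing of the Borel-type cohomology of $B^\GG$ in gradings of odd underlying dimension), the paper reaching that group through the right-hand column and the cokernel of the split monomorphism, you through the $\tE\GG$ terms and the top isomorphism. The only point worth flagging is that your untwisted K\"unneth identification is legitimate precisely because $R-1$ still has even $\sigma$-coefficient when $R\in\ROev(\Pi B)$, so no orientation twist appears in the coefficient system.
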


\begin{proof}
From Theorem~\ref{thm:diagram}, we know that we have an exact sequence
\[
    H_\GG^{RO(\Pi B)}(B/_B B^\GG\smsh_B (E\GG)_+) \to H_\GG^{RO(\Pi B)}(B_+) \to H_\GG^{RO(\Pi B)}(B^\GG_+)
\]
and that the first of these groups is the suspension of the cokernel of the split monomorphism
\[
    H_\GG^{RO(\Pi B)}(B_+\smsh_B (E\GG)_+) \to H_\GG^{RO(\Pi B)}(B^\GG_+\smsh_B (E\GG)_+)
\]
From our assumption on $B^\GG$ and the fact that the cohomology of $E\GG$ is concentrated in even degrees,
we get that $H_\GG^{RO(\Pi B)}(B^\GG_+\smsh_B (E\GG)_+)$ is concentrated in even degrees.
This tells us that $H_\GG^{RO(\Pi B)}(B/_B B^\GG\smsh_B (E\GG)_+)$, the suspension of the cokernel,
is 0 in even degrees. Hence we get that $\eta$ is a monomorphism in even degrees as claimed.
\end{proof}

This corollary is very useful for calculations, as it is easier to first calculate
the cohomology of $B^\GG$, which has trivial $\GG$-action.

We can go a step further: 
A diagram chase shows the following result, which we will not use in this paper,
but will need in \cite{CH:bt2}.

\begin{corollary}
Assume that $B$ satisfies the conditions in Corollary~\ref{cor:even degrees}.
Then the following is a pullback diagram:
\[
    \xymatrix{
        H_\GG^{\ROev(\Pi B)}(B_+) \ar[r]^\eta \ar[d]
            &  H_\GG^{\ROev(\Pi B)}(B_+^\GG) \ar[d] \\
        H_\GG^{\ROev(\Pi B)}(B_+\smsh_B (E\GG)_+) \ar@{>->}[r]^\eta
            &  H_\GG^{\ROev(\Pi B)}(B_+^\GG\smsh_B (E\GG)_+)
    }
\]
\qed
\end{corollary}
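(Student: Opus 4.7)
The strategy is a routine diagram chase in the commutative diagram of Theorem~\ref{thm:diagram} restricted to the subgrading $\ROev(\Pi B)$, drawing on three ingredients already in place: the monomorphism statement of Corollary~\ref{cor:even degrees} for the middle-row $\eta$, the monomorphism for the third-row $\eta$ (encoded by $\rightarrowtail$ in the theorem), and the vertical isomorphism on the right column.

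Uniqueness of the pullback lift is immediate: if two elements $z_1, z_2 \in H_\GG^{\ROev(\Pi B)}(B_+)$ produce the same pair of images, then in particular $\eta(z_1) = \eta(z_2)$, and so $z_1 = z_2$ by Corollary~\ref{cor:even degrees}.

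For existence, suppose $x \in H_\GG^{\ROev(\Pi B)}(B_+^\GG)$ and $y \in H_\GG^{\ROev(\Pi B)}(B_+ \smsh_B (E\GG)_+)$ share a common image $w := \eta(y)$ in $H_\GG^{\ROev(\Pi B)}(B_+^\GG \smsh_B (E\GG)_+)$, where $w$ is also obtained from $x$ via the middle-column downward map. The plan is first to lift $x$ to some $z \in H_\GG^{\ROev(\Pi B)}(B_+)$, then to verify that $z$ maps to $y$ down the left column. By exactness of the middle row, lifting $x$ reduces to showing that the connecting image $\alpha$ of $x$ in $\susp^{-1} H_\GG^R(B/_B B^\GG)$ vanishes. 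The decisive step is to transport $\alpha$ down the right-column isomorphism: by commutativity of the lower-right square, $\alpha$ coincides there with the third-row connecting image of $w$, and the latter is zero precisely because $w = \eta(y)$ already lies in the image of the third-row $\eta$. Invertibility of the right-column map then forces $\alpha = 0$, producing the desired $z$, which automatically lies in the $\ROev(\Pi B)$-graded piece since all the maps preserve grading.

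To finish, let $y'$ denote the image of $z$ under the left-column downward map. Commutativity of the middle square gives $\eta(y') = w = \eta(y)$, and injectivity of the third-row $\eta$ yields $y' = y$. The entire argument is essentially a standard ``$2\times 3$ plus exact columns'' chase, so no step presents a genuine obstacle; the substantive work has already been packaged into Theorem~\ref{thm:diagram} and Corollary~\ref{cor:even degrees}.
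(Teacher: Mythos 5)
Your chase is correct and is exactly the argument the paper has in mind: the paper dismisses this corollary with ``a diagram chase shows the following result,'' and your write-up supplies precisely that chase, using the even-degree monomorphism of Corollary~\ref{cor:even degrees} for uniqueness and the exact rows, exact middle column, split third-row monomorphism, and right-hand vertical isomorphism of Theorem~\ref{thm:diagram} for existence. No gaps; this matches the intended proof.
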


\section{The cohomology of the fixed set $B U(2)^\GG $}\label{sec:fixed sets}

From Corollary~\ref{cor:even degrees}, we know that
\[
    \eta\colon  H_\GG^\gr(B U(2)_+) \to  H_\GG^\gr(B U(2)^\GG _+)
\]
is injective in even degrees. We now compute the target of this map
and the images of the elements that we claim are generators of $H_\GG^\gr(B U(2)_+)$.

Write
\begin{align*}
 B U(2)^\GG  
 	&= \Gr_2(\C^\infty) \disjunion (\Gr_1(\C^\infty)\times\Gr_1({\Cq\infty})) \disjunion \Gr_2({\Cq\infty}) \\
    &= BU(2) \disjunion BT^2 \disjunion BU(2) \\
 	&= B^0 \disjunion B^1 \disjunion B^2.
\end{align*}
We have the following calculation.

\begin{proposition}\label{prop:fixedpoints}
The cohomology of $B U(2)^\GG $ is given by
\[
  H_\GG^\gr(B U(2)^\GG _+) 
  \iso \HS[c_1, c_2, \zeta_1^{\pm 1}, \zeta_2^{\pm 1}]
  \dirsum \HS[x_1, x_2, \zeta_0^{\pm 1}, \zeta_2^{\pm 1}]
  \dirsum \HS[c_1, c_2, \zeta_0^{\pm 1}, \zeta_1^{\pm 1}].
\]
Here, $\grad c_1 = 2$, $\grad c_2 = 4$, $\grad x_1 = 2$ and $\grad x_2 = 2$,
while $\grad \zeta_i = \Omega_{i}$.
\end{proposition}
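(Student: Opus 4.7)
The plan is to exploit that $BU(2)^\GG$ is a disjoint union of three pieces, each with trivial $\GG$-action, so its equivariant cohomology splits as a direct sum of the cohomologies of the components. Each summand then reduces essentially to a tensor product of $\HS$ with the nonequivariant cohomology of that component, modified by the extended grading.

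First, I would set up the direct sum decomposition using the three components $B^0$, $B^1$, $B^2$ identified before the proposition, noting that the $\GG$-action on each is trivial. This means the equivariant cohomology of each component is determined by its nonequivariant cohomology together with the grading data inherited from $RO(\Pi BU(2))$.

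Next, I would analyze how the grading restricts to each component. The inclusion $B^i\hookrightarrow BU(2)$ induces a restriction map $RO(\Pi BU(2))\to RO(\Pi B^i)\iso RO(\GG)$, under which the generator $\Omega_i$ (which encodes the structure of the $i$-th fixed component) maps to $0$, while $\Omega_j$ for $j\neq i$ maps to a nontrivial element of $RO(\GG)$. Correspondingly, the class $\zeta_i$ restricts to $1$ on $B^i$, and for $j\neq i$ the class $\zeta_j$ restricts to an invertible element of $H_\GG^{\Omega_j}(B^i_+)$. This explains why the $i$-th summand carries $\zeta_j^{\pm 1}$ precisely for $j\neq i$, and why there is no dependence on $\zeta_i$ itself.

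Finally, I would combine this grading analysis with the known nonequivariant cohomologies: $B^0$ and $B^2$ are copies of $BU(2)$ with $H^*(BU(2);\Z)=\Z[c_1,c_2]$, and $B^1=BT^2$ with $H^*(BT^2;\Z)=\Z[x_1,x_2]$, all free abelian and concentrated in even degrees. For a space with trivial $\GG$-action and free abelian nonequivariant cohomology in even degrees, the $RO(\GG)$-graded equivariant ordinary cohomology is simply $\HS$ tensored with the nonequivariant polynomial ring; adjoining the invertible $\zeta_j^{\pm 1}$ to account for the extended grading then yields the three summands displayed. The main obstacle is making precise the identification of the restrictions of $\Omega_i$ and the corresponding units $\zeta_i$ on each component; this is handled by the framework for $RO(\Pi B)$-graded cohomology summarized in Appendix~\ref{app:ordinarycohomology} and worked out for $BU(1)$ in \cite{Co:InfinitePublished}.
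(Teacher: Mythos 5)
Your overall strategy---split over the components $B^0$, $B^1$, $B^2$, use that each has trivial $\GG$-action and free, evenly graded nonequivariant cohomology to obtain $\HS[c_1,c_2]$ or $\HS[x_1,x_2]$, and then account for the extended grading by adjoining invertible elements---is the same as the paper's, which simply identifies the generators and refers the structural argument to the analogous computation for $BU(1)^\GG$ in \cite{Co:InfinitePublished}. However, your analysis of the grading is inverted, and as stated it would produce the wrong answer. The restriction $RO(\Pi BU(2))\to RO(\Pi B^i)\iso RO(\GG)$ sends $\Omega_j$ to $0$ for $j\neq i$ and sends $\Omega_i$ to $2\sigma-2$, not the other way around; this is exactly how the $\Omega_i$ are characterized in Appendix~\ref{app:ordinarycohomology}. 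Consequently the kernel of the regrading homomorphism is free on $\{\Omega_j \mid j\neq i\}$, and the change-of-grading mechanism of \S\ref{sec:change of grading} adjoins one invertible element for each of these generators, namely $\zeta_j^{\pm1}$ for $j\neq i$. There is no unit in degree $\Omega_i$ on $B^i$, because that degree is identified with $2\sigma-2$, where the coefficient group is generated by the non-invertible element $\xi$; indeed $\zeta_i$ restricts on $B^i$ not to $1$ but to $\xi\prod_{j\neq i}\zeta_j^{-1}$, consistent with Proposition~\ref{prop:fixedformulas} and the relation $\zeta_0\zeta_1\zeta_2=\xi$.

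With your assignment ($\Omega_i\mapsto 0$ and $\Omega_j$ nontrivial for $j\neq i$), the $i$-th summand would come out as $\HS[c_1,c_2,\zeta_i^{\pm1}]$: the adjoined unit would sit in degree $\Omega_i$, and the degrees $\Omega_j$ for $j\neq i$ would carry no invertible elements. The conclusion you then assert---that the $i$-th summand carries $\zeta_j^{\pm1}$ precisely for $j\neq i$---is the correct one, but it contradicts rather than follows from your premise. Once the restrictions of the $\Omega$'s are corrected, the rest of your argument goes through and agrees with the paper.
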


\begin{proof}
This is proved similarly to \cite[Proposition~7.4]{Co:InfinitePublished}.
The element 
\[
 c_1 \in  H_\GG^2(\Gr_2(\C^\infty)_+) =  H_\GG^2(\Gr_2(\Cq\infty)_+)
\]
is the nonequivariant first Chern class, while
\[
 c_2 \in  H_\GG^4(\Gr_2(\C^\infty)_+) =  H_\GG^4(\Gr_2(\Cq\infty)_+)
\]
is the second Chern class.
The element 
\[
 x_1 \in  H_\GG^2((\Gr_1(\C^\infty)\times \Gr_1({\Cq\infty}))_+)
\]
is the 
first Chern class for the first
factor, while $x_2$ is the first Chern class for the second factor.
\end{proof}

Now we compute the images of various elements under $\eta$.
Recall that $\cw$ is the Euler class of $\omega$, the tautological 2-plane bundle over $BU(2)$,
$\cxw$ is the Euler class of $\chi\omega = \omega\tensor\Cq{}$, $\cl$ is the Euler class
of $\lambda = \ext^2\omega$, and $\cxl$ is the Euler class of $\chi\lambda$.
The elements $\zeta_0$, $\zeta_1$, and $\zeta_2$ come from the component structure of $BU(2)^\GG$
as in \cite[\S16]{Co:InfinitePublished}.

\begin{proposition}\label{prop:fixedformulas}
Write elements of the cohomology of $B U(2)^\GG $ as triples, per
Proposition~\ref{prop:fixedpoints}.
Under the restriction map 
\[
 \eta\colon  H_\GG^\gr(B U(2)_+) \to  H_\GG^\gr(B U(2)^\GG _+),
\]
we have
\begin{align*}
 \eta(\zeta_0) &= (\xi\zeta_1^{-1}\zeta_2^{-1},\ \zeta_0,\ \zeta_0) \\
 \eta(\zeta_1) &= (\zeta_1,\ \xi\zeta_0^{-1}\zeta_2^{-1},\ \zeta_1) \\
 \eta(\zeta_2) &= (\zeta_2,\ \zeta_2,\ \xi\zeta_0^{-1}\zeta_1^{-1}) \\
 \eta(\cl) &= (c_1\zeta_1,\ (e^2+\xi(x_1+x_2))\zeta_0^{-1}\zeta_2^{-1},\ c_1\zeta_1) \\
 \eta(\cxl ) &= ( (e^2+\xi c_1)\zeta_1^{-1},\ (x_1+x_2)\zeta_0\zeta_2,\ 
 							(e^2+\xi c_1)\zeta_1^{-1}) \\
 \eta(\cw ) &= (c_2\zeta_1\zeta_2^2,\ x_1(e^2 + \xi x_2)\zeta_0^{-1}\zeta_2,\ 
 							(e^4 + e^2\xi c_1 + \xi^2 c_2)\zeta_0^{-2}\zeta_1^{-1} ) \\
 \eta(\cxw) &= ( (e^4 + e^2\xi c_1 + \xi^2 c_2)\zeta_1^{-1}\zeta_2^{-2},\ 
 							x_2(e^2+\xi x_1)\zeta_0\zeta_2^{-1},\ c_2\zeta_0^2\zeta_1)
\end{align*}
\end{proposition}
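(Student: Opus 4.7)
The proof would unfold in three stages: identify each bundle on each fixed component, apply the line-bundle Euler-class formulas from \cite{Co:InfinitePublished} together with multiplicativity, and track the $\zeta$-indexing via the grading relation $\Omega_0 + \Omega_1 + \Omega_2 = 2\sigma - 2$.

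First I would identify the restrictions of $\omega$ (and hence of $\lambda$, $\chi\omega$, $\chi\lambda$) to each component. Over $B^0 = \Gr_2(\C^\infty)$, $\omega$ is pulled back from the nonequivariant tautological bundle with trivial $\GG$-action; over $B^2 = \Gr_2(\Cq\infty)$ it is the $\chi$-twist of the nonequivariant tautological bundle, so $\lambda|_{B^2}$ is again untwisted (the two sign twists cancel in $\ext^2$); and over $B^1 = \Gr_1(\C^\infty)\times \Gr_1(\Cq\infty)$ it splits as $\pi_1^*\omega_1 \oplus \chi\,\pi_2^*\omega_2'$, where $\omega_1$ and $\omega_2'$ are the nonequivariant tautological line bundles on the two factors. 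The classes $\lambda$, $\chi\omega$, $\chi\lambda$ are then read off by applying $\ext^2$ and tensoring with $\Cq{}$ as appropriate.

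Second, I would invoke the line-bundle Euler-class formulas from \cite{Co:InfinitePublished}: for a nonequivariant, trivial-action line bundle $L$ with nonequivariant first Chern class $y$ over a $\GG$-fixed space, one has $c_L = y\,\zeta$ and $c_{\chi L} = (e^2 + \xi y)\,\zeta^{-1}$, where $\zeta$ carries the appropriate index for the ambient component. Multiplicativity of $c$ on direct sums then yields $\eta(\cw)$ and $\eta(\cxw)$ on $B^1$ directly. On $B^0$ and $B^2$ I would apply the splitting principle: writing $c_1 = y_1 + y_2$ and $c_2 = y_1 y_2$, the product $(e^2 + \xi y_1)(e^2 + \xi y_2)$ expands to $e^4 + e^2\xi c_1 + \xi^2 c_2$, matching the claimed formula for $\eta(\cw)$ on $B^2$ and $\eta(\cxw)$ on $B^0$ up to a $\zeta$-monomial prefactor.

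Third, the formulas for $\eta(\zeta_i)$ come directly from the grading relation: restricted to the component $B^i$, the class $\Omega_i$ becomes redundant (it is expressible in terms of $\sigma$ and the other $\Omega_j$), so $\zeta_i$ becomes invertible there, and the relation $\zeta_0\zeta_1\zeta_2 = \xi$ forces $\zeta_i \mapsto \xi\zeta_j^{-1}\zeta_k^{-1}$ on $B^i$; on the other two components, $\zeta_i$ restricts to the correspondingly labeled generator of Proposition~\ref{prop:fixedpoints}. The main obstacle will be bookkeeping of $\zeta$-exponents on each component; I would discipline this by checking that the total $RO(\Pi BU(2))$-grading of the proposed image agrees with that of the source Euler class, which pins down the $\zeta$-monomial up to an $\HS$-scalar, with the scalar then determined by the identification of the underlying nonequivariant bundle. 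Once this bookkeeping is set up, each of the seven formulas falls out mechanically.
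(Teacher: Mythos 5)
Your outline follows essentially the same route as the paper's proof: identify the restriction of each bundle to each fixed component, quote the line-bundle Euler class formulas from \cite{Co:InfinitePublished}, use multiplicativity over $B^1$ where $\omega$ genuinely splits as $L_1\oplus\chi L_2$, and handle the non-split rank-two restrictions by passing to a splitting space. What you call the splitting principle on $B^2$ is instantiated in the paper as the pullback along the canonical map $s\colon BT^2\to BU(2)$, and this is exactly the ``new argument'' the paper flags as necessary for $\cw$ and $\cxw$: the coefficient of $e^2\xi c_1$ in the $B^2$-component of $\eta(\cw)$ is invisible to both the nonequivariant restriction $\rho$ (since $\rho(e)=0$) and the fixed-point map $(-)^\GG$ (since $\xi^\GG=0$), so no amount of comparison with underlying or fixed-point data can detect it.

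Two points in your plan need shoring up. First, the splitting principle is not automatic in this graded equivariant setting: you must verify that $s^*$ is injective on the relevant graded piece, which the paper does concretely by exhibiting the $\Z^3$ basis $\{e^4,\ e^2\xi c_1,\ \xi^2 c_2\}\cdot\zeta_0^{-2}\zeta_1^{-1}$ and observing that $s^*c_1=x_1+x_2$ and $s^*c_2=x_1x_2$ keep these three elements independent in the cohomology of $BT^2$. Second, your closing claim that the $\HS$-scalar is ``determined by the identification of the underlying nonequivariant bundle'' is too weak as stated: the coefficient groups are typically copies of $A(\GG)$ (for instance the $B^0$-component of $\eta(\cw)$ lies in $A(\GG)\cdot c_2\zeta_1\zeta_2^2$), and $\rho$ only sees the augmentation $a+2b$ of $a+bg$. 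The paper pins down each scalar by combining $\rho$ with $(-)^\GG$, and, for the one coefficient where even that pair fails, by the pullback to $BT^2$. With those two repairs your argument coincides with the paper's.
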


\begin{proof}
The proofs are similar to the proofs of Propositions~7.5 and~8.6 of
\cite{Co:InfinitePublished}, though a new argument is needed in the cases of $\cw$ and $\cxw$.
We give the details for the calculation of $\eta(\cw) = (y_0,y_1,y_2)$.

We first note that
\[
    \grad\cw = \dim\omega = (4, 2+2\sigma, 4\sigma) = 4 + \Omega_1 + 2\Omega_2,
\]
which we determine by looking at the fibers of $\omega$, which are
$\C^2$ over $B^0$, $\Cpq 1{}$ over $B^1$, and $\Cq 2$ over $B^2$.

The first component of $\eta(\cw)$, $y_0$, lives in 
$H_\GG^{4+\Omega_1+2\Omega_2}(B^0_+)$.
By Proposition~\ref{prop:fixedpoints},
this group is isomorphic to $A(\GG)$, generated by $c_2\zeta_1\zeta_2^2$,
so $y_0 = \alpha c_2\zeta_1\zeta_2^2$
for some $\alpha\in A(\GG)$, say $\alpha = a + bg$.
We know that $y_0$ must restrict to $c_2$ nonequivariantly, hence $a + 2b = 1$.
On the other hand, $y_0^\GG$ is the Euler class of $\omega^\GG | B^0$, which is again
the tautological bundle over $B^0 = BU(2)$, hence $y_0^\GG = c_2$ as well, from which 
we see that $a = 1$. Thus, we must have $y_0 = c_2\zeta_1\zeta_2^2$.

The second component, $y_1$, lives in
\[
    H_\GG^{4+\Omega_1+2\Omega_2}(BT^2_+) = H_\GG^{2+2\sigma-\Omega_1+\Omega_2}(BT^2_+)
    \iso \Z^3
\]
with a basis given by $\{ e^2 x_1\zeta_0^{-1}\zeta_2, e^2 x_2\zeta_0^{-1}\zeta_2, \xi x_1x_2\zeta_0^{-1}\zeta_2 \}$.
Hence, we can write
\begin{equation}\label{eqn:w1}
    y_1 = (a_0e^2 x_1 + a_1e^2 x_2 + b\xi x_1x_2)\zeta_0^{-1}\zeta_2
\end{equation}
for some $a_0, a_1, b\in\Z$.
Reducing to nonequivariant cohomology, we get $bx_1x_2$, whereas $y_1$ should reduce to the
Euler class of the sum of the two tautological line bundles over $BT^2$, which is $x_1x_2$.
Hence $b = 1$.
Taking fixed points, $y_1^\GG$ is the Euler class of the first tautological line bundle over $BT^2$,
hence $y_1^\GG = x_1$. 
(Over $B^1$, the second tautological line bundle is taken with nontrivial action on its fibers,
hence it gives the 0 bundle on taking fixed points.)
But, taking the fixed points of (\ref{eqn:w1}) gives us $a_0x_1 + a_1x_2$,
hence $a_0 = 1$ and $a_1 = 0$. That is,
\[
    y_1 = (e^2 x_1 + \xi x_1x_2)\zeta_0^{-1}\zeta_2
\]
as claimed.

Finally, the component $y_2$ lives in
\[
    H_\GG^{4+\Omega_1+2\Omega_2}(B^2_+) = H_\GG^{4\sigma - 2\Omega_0 - \Omega_1}(BU(2)_+)
    \iso \Z^3
\]
with basis $\{e^4\zeta_0^{-2}\zeta_1^{-1}, e^2\xi c_1\zeta_0^{-2}\zeta_1^{-1}, \xi^2 c_2\zeta_0^{-2}\zeta_1^{-1}\}$.
Write
\begin{equation}\label{eqn:w2}
    y_2 = (a_1e^4 + a_2e^2\xi c_1 + a_3\xi^2 c_2)\zeta_0^{-2}\zeta_1^{-1}.
\end{equation}
In this case, reducing nonequivariantly and taking fixed points will allow us to see that
$a_1 = a_3 = 1$, but will not determine $a_2$. Instead, we consider the image of $y_2$
in the cohomology of $BT^2$ under the canonical map $s\colon BT^2\to BU(2)$.
There, $s^*y_2$ is the Euler class of $s^*\omega$, the sum of the two tautological line bundles over $BT^2$
with nontrivial action on their fibers.
From \cite{Co:InfinitePublished}, we know that
the Euler classes of these line bundles are (up to multiples of $\zeta$s) $e^2 + \xi x_1$
and $e^2 + \xi x_2$, and we know that the Euler class of a sum is the product of the Euler classes.
Hence, we must have
\begin{align*}
    s^* y_2 &= (e^2 + \xi x_1)(e^2 + \xi x_2)s^*(\zeta_0^{-2}\zeta_1^{-1}) \\
    &= (e^4 + e^2\xi(x_1 + x_2) + \xi^2 x_1x_2)s^*(\zeta_0^{-2}\zeta_1^{-1})
\end{align*}
Since $s^* c_1 = x_1 + x_2$ and $s^* c_2 = x_1x_2$, we see in (\ref{eqn:w2}) that
we must have $a_1 = a_2 = a_3 = 1$, verifying the claim in the proposition.
\end{proof}

\begin{corollary}\label{cor:relations}
In $H_\GG^\gr(B U(2)_+)$, we have the relations
\begin{align*}
	\zeta_0\zeta_1\zeta_2 &= \xi \\
	\zeta_1 \cxl &= (1-\kappa)\zeta_0\zeta_2 \cl + e^2 \\
	\zeta_2^2 \cxw &= (1-\kappa)\zeta_0^2 \cw + e^2 \cxl.
\end{align*}
\end{corollary}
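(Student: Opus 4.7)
The plan is to apply Corollary~\ref{cor:even degrees}. A quick inspection of the gradings involved — using that $\grad\zeta_i = \Omega_i$, that $\cw$, $\cxw$, $\cl$, $\cxl$ are Euler classes of complex bundles (so have even components in the $\Z\{1,\sigma\}$ part of their grading), and that $\xi$, $\kappa$, and $e^2$ all lie in gradings of the form $a + b\sigma$ with $a, b$ even — shows that each of the three proposed relations is homogeneous of degree in $\ROev(\Pi BU(2))$. So it suffices to verify them after applying the monomorphism
\[
  \eta\colon H_\GG^\gr(BU(2)_+) \to H_\GG^\gr(BU(2)^\GG_+),
\]
and for this we have completely explicit formulas from Propositions~\ref{prop:fixedpoints} and~\ref{prop:fixedformulas}.

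Writing both sides of each relation as triples according to the decomposition $BU(2)^\GG = B^0 \disjunion B^1 \disjunion B^2$, the verification becomes three separate calculations in the polynomial rings listed in Proposition~\ref{prop:fixedpoints}. The first relation $\zeta_0\zeta_1\zeta_2 = \xi$ is immediate: on every summand the $\zeta$-factors cancel and leave a single $\xi$. For the second and third relations, I would compute each side componentwise, yielding expressions whose $\zeta$-monomials automatically agree (the gradings force this) and whose $\HS[c_1,c_2]$- or $\HS[x_1,x_2]$-coefficients then need to be shown equal.

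The coefficient identities all reduce to two standard relations in $\HS$ recorded in \cite{Co:InfinitePublished}, namely $\kappa\xi = 0$ (equivalently $(1-\kappa)\xi = \xi$) and $\kappa e^2 = 2 e^2$. With those in hand, each of the six componentwise verifications for the last two relations collapses to a short monomial identity: on the $B^0$ and $B^2$ summands the $(1-\kappa)$ coefficient is converted to $1$ by $\kappa\xi = 0$, so the relation reduces to a tautology, while on the $B^1 = BT^2$ summand the cross-term produced by expanding $(1-\kappa)$ is absorbed by $e^2$ using $\kappa e^2 = 2 e^2$.

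The main obstacle, if one can call it that, is pure bookkeeping: one must track several monomials per summand and apply the $\HS$-identities in the right places. No new ideas beyond Corollary~\ref{cor:even degrees} and the explicit restriction formulas of Proposition~\ref{prop:fixedformulas} are required; the whole proof is a direct verification.
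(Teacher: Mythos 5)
Your proposal is correct and follows the same route as the paper: both sides of each relation lie in even gradings, so by Corollary~\ref{cor:even degrees} it suffices to check the identities after applying $\eta$, which is done componentwise using the formulas of Proposition~\ref{prop:fixedformulas} together with the identities $(1-\kappa)\xi = \xi$ and $(1-\kappa)e^2 = -e^2$ in $\HS$. The paper carries out exactly this verification (writing out the second relation in full and noting the others are similar), so nothing further is needed.
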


\begin{proof}
Because both sides of these equalities live in even gradings, 
Corollary~\ref{cor:even degrees} tells us that it suffices to
verify that they are true after applying $\eta$.

For example, for the second relation, we have
\begin{align*}
    \eta(\zeta_1 \cxl)
    &= \eta(\zeta_1)\eta(\cxl) \\
    &= (\zeta_1,\ \xi\zeta_0^{-1}\zeta_2^{-1},\ \zeta_1) \\
    &\qquad   \cdot ( (e^2+\xi c_1)\zeta_1^{-1},\ (x_1+x_2)\zeta_0\zeta_2,\ 
 							(e^2+\xi c_1)\zeta_1^{-1}) \\
    &= ( e^2+\xi c_1,\ \xi(x_1+x_2),\ e^2+\xi c_1) \\
\intertext{while}
    \eta((1-\kappa)\zeta_0\zeta_2 \cl + e^2)
    &= (1-\kappa)(\xi c_1,\ e^2+\xi(x_1+x_2),\ \xi c_1) + (e^2,\ e^2,\ e^2) \\
    &= ( e^2+\xi c_1,\ \xi(x_1+x_2),\ e^2+\xi c_1)
\end{align*}
as well, using that $(1-\kappa)\xi = \xi$ and $(1-\kappa)e^2 = -e^2$. 
The other two relations are verified similarly.
\end{proof}

\section{Freeness of the proposed structure}\label{sec:freeness}

We know of the following elements in $ H_\GG^\gr(B U(2)_+)$, with their associated degrees:
\begin{align*}
	\zeta_0 && \grad\zeta_0 &= \Omega_0 \\
	\zeta_1 && \grad\zeta_1 &= \Omega_1 \\
	\zeta_2 && \grad\zeta_2 &= \Omega_2 \\
	\cw  &= e(\omega) & \grad \cw  &= 4 + \Omega_1 + 2\Omega_2 \\
	\cxw &= e(\chi\omega) & \grad \cxw &= 4 + 2\Omega_0 + \Omega_1 \\
	\cl &= e(\lambda) & \grad \cl &= 2 + \Omega_1 \\
	\cxl  &= e(\chi\lambda) & \grad \cxl  &= 2 + \Omega_0 + \Omega_2.
\end{align*}
As in the proof of Proposition~\ref{prop:fixedformulas},
the gradings of the Euler classes are the equivariant dimensions of
the bundles, which are
found by looking at the fibers of the bundles
over each fixed set component.
We will often use the names of the bundles for their dimensions, writing, for example,
\[
    \omega = 4 + \Omega_1 + 2\Omega_2.
\]

\begin{definition}\label{def:structure}
Let $ P^\gr$ be the $RO(\Pi BU(2))$-graded commutative ring defined as the algebra over
$\HS$ generated by elements $\zeta_0$, $\zeta_1$, $\zeta_2$, $\cw $,
$\cxw$, $\cl$, and $\cxl $, of the gradings above,
subject to the relations
\begin{align*}
	\zeta_0\zeta_1\zeta_2 &= \xi \\
	\zeta_1 \cxl  &= (1-\kappa)\zeta_0\zeta_2 \cl + e^2 \\
	\zeta_2^2 \cxw &= (1-\kappa)\zeta_0^2 \cw  + e^2 \cxl .
\end{align*}
\end{definition}

By Corollary~\ref{cor:relations}, we get a ring map
\[
    P^\gr \to H_\GG^\gr(B U(2)_+)
\]
which we will ultimately show is an isomorphism.
In the remainder of this section we prove the following key result.

\begin{proposition}\label{prop:freeness}
$ P^\gr$ is a free module over $\HS$.
It has a basis consisting of all monomials in the generators
$\zeta_0$, $\zeta_1$, $\zeta_2$, $\cw $,
$\cxw$, $\cl$, and $\cxl $
that are {\em not} multiples of
\begin{itemize}
\item $\zeta_0\zeta_1\zeta_2$
\item $\zeta_1 \cxl $
\item $\zeta_0^2\zeta_2^2 \cl$
\item $\zeta_2^2 \cxw$
\item $\zeta_0^3\zeta_1 \cw $
\item $\zeta_0^4 \cl\cw $.
\end{itemize}
\end{proposition}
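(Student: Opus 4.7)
The plan is a Gröbner-basis / Diamond-Lemma argument over the coefficient ring $\HS$. The three defining relations of Definition~\ref{def:structure} become rewrite rules
\begin{align*}
    \zeta_0\zeta_1\zeta_2 &\to \xi, \\
    \zeta_1\cxl &\to (1-\kappa)\zeta_0\zeta_2\cl + e^2, \\
    \zeta_2^2\cxw &\to (1-\kappa)\zeta_0^2\cw + e^2\cxl,
\end{align*}
whose leading monomials are three of the six forbidden monomials in the proposition. A suitable weighted monomial order (for instance, giving $\zeta_0$, $\zeta_1$, $\cxl$, $\cxw$ large weight and $\zeta_2$, $\cl$, $\cw$ small weight) makes each of the eventual six rewrites strictly weight-decreasing, so termination is automatic.

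The remaining three forbidden monomials will be forced by critical-pair analysis. The non-trivial ambiguities among the three defining rules come from the monomials
\[
    \zeta_0\zeta_1\zeta_2\cxl, \qquad \zeta_0\zeta_1\zeta_2^2\cxw, \qquad \zeta_0^2\zeta_2^2\cl\cxw;
\]
reducing each in the two available ways and equating yields an identity of the shape $(1-\kappa)\,M \equiv N \pmod{I}$ where $M$ is one of $\zeta_0^2\zeta_2^2\cl$, $\zeta_0^3\zeta_1\cw$, or $\zeta_0^4\cl\cw$, and $N$ is a $\HS$-combination of basis monomials. Because $(1-\kappa)$ is a unit in $A(\GG)$ with $(1-\kappa)^2 = 1$, multiplying each identity by $(1-\kappa)$ and simplifying via $(1-\kappa)\xi = \xi$ and $(1-\kappa)e^2 = -e^2$ produces an explicit rewrite $M \to N'$ with $N'$ again a combination of basis monomials. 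A second sweep through the critical pairs of the augmented six-rule system then verifies confluence, so every monomial reduces in finitely many steps to an $\HS$-combination of basis monomials; this establishes the spanning half.

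For linear independence, I would present $P^\gr = R/I$ with $R = \HS[\zeta_0,\zeta_1,\zeta_2,\cw,\cxw,\cl,\cxl]$ and $I = (r_1, r_2, r_3)$, and construct the start of a free $R$-resolution
\[
    R^s \to R^3 \xrightarrow{(r_1,r_2,r_3)} R \to P^\gr \to 0
\]
in which the three critical-pair identities above provide an explicit generating set for the first syzygy module. A graded rank count then matches the $\HS$-rank of $P^\gr$ in each $RO(\Pi BU(2))$-grading to the number of basis monomials of that grading. This bookkeeping is the technical content deferred to Appendix~\ref{app:resolution}.

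The principal obstacle is that $\HS$ is not a domain, so one cannot straightforwardly cancel $(1-\kappa)$ from a critical-pair identity. The escape is the algebraic fact $\kappa(\kappa-2) = 0$ in $A(\GG)$, which makes $(1-\kappa)$ a unit; this single input is what pins down exactly three additional leading monomials (no more, no less), recovers the six-monomial list in the statement, and ensures that the rank count in the appendix lines up with the proposed basis.
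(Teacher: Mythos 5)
Your strategy is the same as the paper's: run Bergman's diamond lemma (in its commutative form) over $\HS$, use that $(1-\kappa)$ is a unit (since $\kappa^2=2\kappa$ gives $(1-\kappa)^2=1$) to convert the critical-pair identities into three additional rewrite rules with leading monomials $\zeta_0^2\zeta_2^2\cl$, $\zeta_0^3\zeta_1\cw$, and $\zeta_0^4\cl\cw$, and then check confluence of the resulting six-rule system; the paper does exactly this, with the overlap computations carried out in Appendix~\ref{app:resolution}. Two points where your write-up falls short. First, your sample weighting does not work: for the rule $\zeta_2^2\cxw \mapsto (1-\kappa)\zeta_0^2\cw + e^2\cxl$ one needs $2w(\zeta_2)+w(\cxw) > 2w(\zeta_0)+w(\cw)$, which fails when $\zeta_0$ is heavy while $\zeta_2$ and $\cw$ are light; a working choice (the paper's) is $w(\zeta_0)=w(\zeta_2)=1$, $w(\zeta_1)=2$, $w(\cl)=4$, $w(\cxl)=5$, $w(\cw)=6$, $w(\cxw)=7$, with ties broken lexicographically. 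Second, your separate linear-independence step via a free resolution and ``graded rank count'' is both unnecessary and, as described, circular: the diamond lemma already asserts that once every ambiguity of a terminating, order-compatible reduction system is resolvable, the irreducible monomials form an $\HS$-\emph{basis} of the quotient, not merely a spanning set, whereas computing ``the $\HS$-rank of $P^\gr$ in each grading'' independently would presuppose the very basis you are trying to establish (and building the full syzygy module over the non-domain $\HS$ is essentially equivalent to the confluence check anyway). Dropping that paragraph and completing the confluence verification for all overlapping pairs finishes the proof along the paper's lines.
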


\begin{proof}
We use Bergman's diamond lemma, Theorem~1.2 of \cite{Berg:diamondLemma}, as modified
for the commutative case by the comments in his \S10.3.
Let
\[
 E = \{ \zeta_0, \zeta_1, \zeta_2, \cl, \cxl , \cw , \cxw \}
\]
be the ordered set of generators and let $[E]$ be the free commutative monoid on $E$, i.e., the
set of monomials in the generators. Let $ Q$ be the ideal generated by the three relations
in the definition of $ P$, so that
\[
  P^\gr = \HS[E]/ Q.
\]
We impose the following order on $[E]$. We first give weights to the generators:
The weights of $\zeta_0$ and $\zeta_2$ are each 1; the weight of $\zeta_1$ is 2;
the weights of $\cl$ and $\cxl $ are 4 and 5, respectively;
and the weights of $\cw $ and $\cxw$ are 6 and 7, respectively.
This gives weights to each monomial in the obvious way. We then order the monomials first by weight,
then by lexicographical order for monomials of the same weight.
As needed to apply the diamond lemma, this order obeys the multiplicative property that,
if $A < B$, then $AC < BC$ for any monomial $C$.
It also obeys the descending chain condition, because there are only finitely many monomials
of any given weight.

We now specify a reduction system, consisting of pairs $(W,f)$ where $W\in [E]$ and
$f\in \HS[E]$ is a polynomial with each of its monomials preceding $W$ in the order on $[E]$.
We need further that $ Q$ is the ideal generated
by the collection of differences $W-f$ for all such pairs.
Here is the reduction system,\footnote{We thank the developers of Macaulay2, which was used to help find this reduction system in the form of a Gr\"obner basis.} 
with each pair $(W,f)$ written as $W\mapsto f$:
\begin{enumerate}[label=R\arabic*,series=reductions]
\item\label{red:1}
	$\zeta_0\zeta_1\zeta_2 \mapsto \xi$

\item\label{red:2}
	$\zeta_1 \cxl  \mapsto (1-\kappa)\zeta_0\zeta_2 \cl + e^2$

\item\label{red:3}
	$\zeta_0^2\zeta_2^2 \cl \mapsto \xi \cxl  + e^2\zeta_0\zeta_2$

\item\label{red:4}
	$\zeta_2^2 \cxw \mapsto (1-\kappa)\zeta_0^2 \cw  + e^2 \cxl $

\item\label{red:5}
	$\zeta_0^3\zeta_1 \cw  \mapsto \xi\zeta_2 \cxw - e^2\zeta_0^2\zeta_2 \cl + e^4\zeta_0$

\item\label{red:6}
	$\zeta_0^4 \cl\cw  \mapsto
		\xi \cxl \cxw + e^2\zeta_0^2 \cl \cxl  - e^2\zeta_0\zeta_2 \cxw$
\end{enumerate}
The reader can check that each monomial on the right precedes the monomial on the left in the order we defined on $[E]$.
Reductions \ref{red:1}, \ref{red:2}, and \ref{red:4} are the relations defining $ P$, so generate $ Q$.
The other three reductions follow from the relations so are also in $ Q$, as shown by the following
congruences modulo $ Q$:
\begin{align*}
	\zeta_0^2\zeta_2^2 \cl
		&= (1-\kappa)\zeta_0\zeta_2 \cdot (1-\kappa)\zeta_0\zeta_2 \cl \\
		&\equiv (1-\kappa)\zeta_0\zeta_1\zeta_2 \cxl  - (1-\kappa)e^2\zeta_0\zeta_2  \\
		&\equiv \xi \cxl  + e^2\zeta_0\zeta_2 \\
	\zeta_0^3\zeta_1 \cw 
		&= (1-\kappa)\zeta_0\zeta_1 \cdot (1-\kappa)\zeta_0^2 \cw  \\
		&\equiv (1-\kappa)\zeta_0\zeta_1\zeta_2^2 \cxw - (1-\kappa)e^2 \zeta_0\zeta_1 \cxl  \\
		&\equiv \xi\zeta_2 \cxw + e^2\zeta_0\zeta_1 \cxl  \\
		&\equiv \xi\zeta_2 \cxw - e^2\zeta_0^2\zeta_2 \cl + e^4\zeta_0 \\
	\zeta_0^4 \cl\cw 
		&= \zeta_0^2 \cl \cdot \zeta_0^2 \cw  \\
		&\equiv (1-\kappa)\zeta_0^2\zeta_2^2 \cl\cxw + e^2\zeta_0^2 \cl \cxl  \\
		&\equiv \xi \cxl \cxw + e^2\zeta_0^2 \cl \cxl  - e^2\zeta_0\zeta_2 \cxw
\end{align*}
The last step is to verify that we can resolve all ambiguities in this reduction system.
This somewhat tedious check we relegate to Appendix~\ref{app:resolution}.

Assuming the ambiguities resolved, the diamond lemma applies and proves the proposition.
\end{proof}

\section{Change of grading}\label{sec:change of grading}

In the following section we will be discussing maps between rings and modules with different gradings.
One example is the ring map $\rho\colon H_\GG^{RO(\GG)}(S^0) \to H^\Z(S^0;\Z)$ from the equivariant
cohomology of a point to the nonequivariant cohomology.
If $\alpha\in RO(\GG)$, this is a map
\[
    \rho\colon H_\GG^\alpha(S^0) \to H^{\rho(\alpha)}(S^0;\Z)
\]
where $\rho\colon RO(\GG)\to\Z$ is the augmentation map, $\rho(a+b\sigma) = a+b$.
It will be convenient in such cases to \emph{regrade} the target $H^\Z(S^0;\Z)$ on $RO(\GG)$
to get the ring $H^{RO(\GG)}(S^0;\Z)$ defined by
\[
    H^\alpha(S^0;\Z) := H^{\rho(\alpha)}(S^0;\Z).
\]

Now, the kernel of $\rho\colon RO(\GG)\to \Z$ is the subgroup generated by $\sigma - 1$.
In particular, we have
\[
    H^{\sigma-1}(S^0;\Z) = H^0(S^0;\Z)
\]
by definition. If we let $\iota\in H^{\sigma-1}(S^0;\Z)$ be the element corresponding to 1, then
multiplication by $\iota$ gives us the identification above, and in general multiplication by $\iota$
is an isomorphism $H^\alpha(S^0;\Z) \iso H^{\alpha+\sigma-1}(S^0;\Z)$, so $\iota$ is an invertible element
of the ring $H^{RO(\GG)}(S^0;\Z)$. In fact, we can write
\[
    H^{RO(\GG)}(S^0;\Z) = H^\Z(S^0;\Z)[\iota^{\pm 1}] = \Z[\iota^{\pm 1}]
\]
where $\grad \iota = \sigma - 1$.
(See also \cite[Proposition~3.8]{Co:InfinitePublished}.)

Another example is the map $\phi = (-)^\GG\colon H_\GG^{RO(\GG)}(S^0) \to H^\Z(S^0;\Z)$
that takes a cohomology class to its fixed point class \cite[Definition~1.13.26]{CostenobleWanerBook}.
In this case we have maps
\[
    \phi\colon H_\GG^\alpha(S^0) \to H^{\phi(\alpha)}(S^0;\Z)
\]
where $\phi\colon RO(\GG)\to \Z$ is given by $\phi(a+b\sigma) = (a+b\sigma)^\GG = a$.
The kernel of $\phi$ is generated by $\sigma$, so if we define
$H_\phi^{RO(\GG)}(S^0;\Z)$ by
\[
    H_\phi^\alpha(S^0;\Z) := H^{\alpha^\GG}(S^0;\Z),
\]
we get
\[
    H_\phi^{RO(\GG)}(S^0;\Z) = \Z[e^{\pm 1}]
\]
where $\grad e = \sigma$.

In general, if $A^R$ is a ring graded on an abelian group $R$ and $\gamma\colon Q\to R$ is a homomorphism,
we can regrade $A$ on $Q$ via
\[
    A^q := A^{\gamma(q)}
\]
to get the graded ring $A^Q$ (or $A_\gamma^Q$ if it is necessary to make unambiguous what homomorphism is being used). 
The following often holds in the cases we use.
(This result is related to \cite[Theorem~5.4]{Dade:GroupGraded} when $R$ is a trivial group.)

\begin{proposition}
If $A^R$ is a graded commutative ring and $\gamma\colon Q\to R$ is a split epimorphism of abelian groups, then
\[
    A^Q_\gamma \iso A^R[\zeta^\alpha \mid \alpha \in \ker\gamma]/
        \rels{\zeta^0 = 1,\ \zeta^\alpha\zeta^\beta = \zeta^{\alpha+\beta}}
\]
where $\grad\zeta^\alpha = \alpha$. (Note that the relations imply that each $\zeta^\alpha$ is invertible.)
\end{proposition}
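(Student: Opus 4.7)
The plan is to use the splitting to construct an explicit degreewise isomorphism. Let $s\colon R \to Q$ be a section of $\gamma$, so that $\gamma \circ s = \id_R$; this yields a direct-sum decomposition $Q = s(R) \oplus \ker\gamma$, and every $q \in Q$ has a unique expression $q = s(\gamma(q)) + \kappa(q)$ with $\kappa(q) := q - s(\gamma(q)) \in \ker\gamma$. The splitting is precisely what allows us to view $R$ as a subgroup of $Q$ and thereby regard $A^R$ and the symbols $\zeta^\alpha$ as sitting inside a common $Q$-graded ring; in particular it is what makes the right-hand side $B$ of the claimed isomorphism well-defined as a $Q$-graded ring.

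First I would define a graded ring map $\Phi\colon B \to A^Q_\gamma$ on generators. A homogeneous element $a \in A^r \subset B$ sits in $Q$-degree $s(r)$, and I send it to the same element viewed in $A^{s(r)}_\gamma = A^{\gamma(s(r))} = A^r$. Each $\zeta^\alpha$ with $\alpha \in \ker\gamma$ sits in $Q$-degree $\alpha$, and I send it to $1 \in A^0 = A^{\gamma(\alpha)} = A^\alpha_\gamma$. The defining relations $\zeta^0 = 1$ and $\zeta^\alpha\zeta^\beta = \zeta^{\alpha+\beta}$ are mapped to literal identities in $A^0_\gamma$, so $\Phi$ is a well-defined graded ring homomorphism.

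Next I would verify that $\Phi$ is bijective degree by degree. For $q \in Q$, the $q$-graded part of $B$ is spanned by monomials $a \cdot \zeta^\alpha$ with $a \in A^r$ and $s(r) + \alpha = q$; applying $\gamma$ forces $r = \gamma(q)$, and consequently $\alpha = \kappa(q)$. Thus $B^q = A^{\gamma(q)} \cdot \zeta^{\kappa(q)}$, canonically isomorphic to $A^{\gamma(q)}$ as an abelian group, and $\Phi$ restricted to this degree is precisely the tautological identification of $A^{\gamma(q)}$ with $A^q_\gamma = A^{\gamma(q)}$.

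I do not expect any real obstacle: once the splitting is fixed, everything reduces to bookkeeping with degrees. The only point to be handled with care is that, in interpreting $B$ as a $Q$-graded ring, the $R$-grading on $A^R$ must be pushed into $Q$ via the chosen $s$, since without a splitting this construction would not admit such a clean closed form; this is the role of the hypothesis that $\gamma$ is split.
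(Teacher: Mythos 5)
Your proof is correct and is essentially the paper's argument, made explicit: the paper simply defines $\zeta^\alpha\in A^\alpha_\gamma=A^0$ as the element corresponding to $1$ and leaves the rest implicit, while you spell out the induced map from the presented ring and the degreewise identification $B^q=A^{\gamma(q)}\cdot\zeta^{\kappa(q)}\iso A^{\gamma(q)}$ via the section $s$. Your observation that the splitting is what makes the right-hand side well-defined as a $Q$-graded ring is a worthwhile clarification, but the underlying approach is the same.
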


\begin{proof}
For each $\alpha\in\ker\gamma$, we have $A^\alpha = A^0$. We let $\zeta^\alpha\in A^\alpha$
be the element corresponding to $1\in A^0$.
\end{proof}

In the cases we use, $\ker\gamma$ is in fact a free abelian group, in which case we need
adjoin just one invertible element for each generator of $\ker\gamma$.
Examples are the groups $H_\rho^{RO(\GG)}(S^0;\Z)$ and $H_\phi^{RO(\GG)}(S^0;\Z)$
above.

All of this generalizes to graded modules.
Suppose that $M^R$ is a module over $A^R$ with $\gamma\colon Q\to R$ 
a homomorphism.
We can regrade $M$ on $Q$ to get a module $M^Q$ over $A^Q$.
If $M^R$ is free over $A^R$, then $M^Q$ is a free module over $A^Q$.
Moreover, modulo multiplication by invertible elements, there is a correspondence between
bases for $M^Q$ over $A^Q$ and bases for $M^R$ over $A^R$.

\section{The cohomology of $B U(2)$}\label{sec:mainresult}

As noted when we defined $P^\gr$, we have a ring map $P^\gr\to H_\GG^\gr(B U(2)_+)$.
There are two possible methods of proof to show that this map is an isomorphism.
One is the line of argument used in \cite{Co:InfinitePublished}, for which we would need
to show two isomorphisms,
\[
     P^\gr\tensorS H_\GG^{RO(\GG)}((E\GG)_+) \iso
         H_\GG^\gr(B U(2)_+\smsh (E\GG)_+)
\]
and
\[
     P^\gr\tensorS H_\GG^{RO(\GG)}(\tE\GG) \iso
         H_\GG^\gr(B U(2)_+\smsh \tE\GG).
\]
The simpler alternative we will use here is based on the result behind the proof used in \cite{CHTFiniteProjSpace},
which is the following.

\begin{proposition}[\cite{CHTFiniteProjSpace}, Proposition~4.4]\label{prop:proofofbasis}
Let $X$ be a $C_2$-ex-space over $Y$ of finite type and let
\[
 \sX := \big\{ x_\alpha \in H_{C_2}^{\gamma_\alpha}(X) \big\}
\]
be a collection of cohomology elements.
If $ \rho^*\sX$ is a basis 
for $H_\rho^{RO(\Pi Y)}(X;\Z)$
and $\sX^{C_2}$ is a basis for $H_\phi^{RO(\Pi Y)}(X^{C_2};\Z)$
(both as $H^{RO(\GG)}(S^0;\Z)$-modules),
then $\sX$ is a basis for $H_{C_2}^{RO(\Pi Y)}(X)$ as a module over
$\HS$.
\qed
\end{proposition}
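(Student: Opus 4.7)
The plan is to introduce the $\HS$-linear evaluation map
\[
    \Phi\colon F := \Dirsum_\alpha \HS\{x_\alpha\} \longrightarrow H_\GG^{RO(\Pi Y)}(X)
\]
from the free $\HS$-module on $\sX$ and to prove it is an isomorphism by means of the universal cofibration
\[
    (E\GG)_+ \longrightarrow S^0 \longrightarrow \tE\GG.
\]
Smashing fiberwise over $Y$ with $X$ produces a cofiber sequence $X\smsh_Y(E\GG)_+ \to X \to X\smsh_Y\tE\GG$ and hence a long exact sequence in $H_\GG^{RO(\Pi Y)}$. Freeness of $F$ over $\HS$ ensures that tensoring $F$ with the corresponding $H_\GG^{RO(\GG)}$-values of $(E\GG)_+$, $S^0$, and $\tE\GG$ likewise yields a long exact sequence, and naturality of $\Phi$ gives a ladder between the two. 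The five lemma then reduces the problem to checking that $\Phi$ becomes an isomorphism after smashing $X$ with $(E\GG)_+$ and with $\tE\GG$ separately.

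For the $(E\GG)_+$-piece, universality of the free action makes the $\HS$-module structure on $H_\GG^{RO(\Pi Y)}(X\smsh_Y(E\GG)_+)$ factor through an invariant (Borel-type) theory that, under the $\rho$-regrading of \S\ref{sec:change of grading}, is governed by $H_\rho^{RO(\GG)}(S^0;\Z) = \Z[\iota^{\pm 1}]$; this allows one to identify $\Phi_{(E\GG)_+}$ with the map induced by $\rho^*\sX$ on the free module over $H_\rho^{RO(\Pi Y)}(X;\Z)$, which is an isomorphism by hypothesis. For the $\tE\GG$-piece, only the fixed set of $X$ contributes, since $(X\smsh_Y\tE\GG)^\GG = X^\GG\smsh_{Y^\GG}\tE\GG$ and $\tE\GG$ is nonequivariantly contractible. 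Under the $\phi$-regrading the $\HS$-structure is governed by $H_\phi^{RO(\GG)}(S^0;\Z) = \Z[e^{\pm 1}]$, and the hypothesis that $\sX^\GG$ is a basis for $H_\phi^{RO(\Pi Y)}(X^\GG;\Z)$ then delivers the isomorphism $\Phi_{\tE\GG}$.

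The principal obstacle I anticipate is making these two identifications precise in the $RO(\Pi Y)$-graded, parametrized setting: one must verify that the Borel-like and geometric-fixed-point-like functors produced by smashing over $Y$ with $(E\GG)_+$ and with $\tE\GG$ reduce, after $\rho$- and $\phi$-regrading respectively, to the prescribed targets as modules over $\Z[\iota^{\pm 1}]$ and $\Z[e^{\pm 1}]$, in a way compatible with the fundamental-groupoid grading on $Y$. The finite-type hypothesis ensures the ladder of long exact sequences is well-behaved and the five lemma applies. Once the two identifications are in hand, the argument closes cleanly.
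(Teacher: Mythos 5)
Your overall architecture --- the map $\Phi$ out of a free $\HS$-module, the cofibration $(E\GG)_+\to S^0\to\tE\GG$ smashed with $X$ over $Y$, exactness of $F\tensor_{\HS}(-)$ by freeness, and the five lemma --- is exactly the skeleton of the argument in \cite{CHTFiniteProjSpace}, and the assembly step itself is sound.

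The gap lies in the two identifications you defer to your final paragraph, and as you have stated them they would fail rather than merely needing verification. The group $H_\GG^{RO(\Pi Y)}(X\smsh_Y(E\GG)_+)$ is a (twisted) Borel cohomology: it is a module over $H_\GG^{RO(\GG)}((E\GG)_+)$, which is essentially the $RO(\GG)$-graded cohomology of $B\GG$ and contains a great deal of $2$-torsion. It does not ``reduce under the $\rho$-regrading'' to $H_\rho^{RO(\Pi Y)}(X;\Z)$ as a module over $\Z[\iota^{\pm 1}]$, so $\Phi_{(E\GG)_+}$ cannot be identified with the map of free $\Z[\iota^{\pm 1}]$-modules induced by $\rho^*\sX$. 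Likewise $H_\GG^{RO(\GG)}(\tE\GG)$ is strictly larger than $\Z[e^{\pm 1}]$ (the class $\xi$ survives there), so the $\tE\GG$-piece is not the $\phi$-regraded cohomology of $X^{\GG}$. What is true, and what the cited proof establishes as separate preliminary lemmas, are \emph{detection} statements: a family whose image under $\rho$ is a basis of $H_\rho^{RO(\Pi Y)}(X;\Z)$ over $\Z[\iota^{\pm 1}]$ gives a basis of $H_\GG^{RO(\Pi Y)}(X\smsh_Y(E\GG)_+)$ over $H_\GG^{RO(\GG)}((E\GG)_+)$, and dually a family whose geometric fixed points give a basis over $\Z[e^{\pm 1}]$ gives a basis of $H_\GG^{RO(\Pi Y)}(X\smsh_Y\tE\GG)$ over $H_\GG^{RO(\GG)}(\tE\GG)$, using the equivalence $X^{\GG}\smsh_Y\tE\GG\simeq X\smsh_Y\tE\GG$. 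Proving these is where the finite-type hypothesis and the real work (the skeletal filtration of $E\GG$ and a convergence argument) enter; they are the actual content of the proposition, and your proposal leaves them as an acknowledged but mischaracterized obstacle rather than closing them.
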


We will apply this result with $X = Y = B U(2)$ and
$\sX$ being the image of a basis for $ P^\gr$.
In order to use this result, we need to examine the rings
$H_\rho^\gr(BU(2)_+;\Z)$ and
$H_\phi^\gr((B U(2))^\GG_+;\Z)$ and the restriction maps to these from
the equivariant cohomology of $B U(2)$.

\subsection{Restriction to nonequivariant cohomology}
We start with the map
\[
    \rho\colon H_\GG^\gr(B U(2)_+) \to H_\rho^\gr(BU(2)_+;\Z).
\]
We are (re)grading the ring on the right on $RO(\Pi BU(2))$
via the restriction map $\rho\colon RO(\Pi BU(2))\to \Z$ given by
\[
    \rho(1) = 1 \qquad \rho(\sigma) = 1 \qquad \rho(\Omega_i) = 0.
\]
The kernel of this map is the subgroup generated by the $\Omega_i$ together with $\sigma-1$,
and there are corresponding
units in $H^\gr(BU(2)_+;\Z)$ which we can identify with the elements $\rho(\zeta_i)$
and the element we have called $\iota$, respectively.
We write $\zeta_i = \rho(\zeta_i)$ again.
Note that
\[
    \zeta_0\zeta_1\zeta_2 = \iota^2.
\]

As in the equivariant case, write $\omega$ for the tautological line bundle
over $BU(2)$, and let $c_1 = c_1(\omega)$ and $c_2 = c_2(\omega)$ be its
Chern classes.
Combining the familiar structure of the cohomology of $BU(2)$ with the invertible elements introduced
by the regrading on $RO(\Pi BU(2))$, we get the following calculation.
\begin{equation}\label{eqn:nonequivariant}
    H^\gr(BU(2)_+;\Z) = \Z[c_1,c_2,\iota^{\pm 1},\zeta_i^{\pm 1}]
    \big/\rels{\zeta_0\zeta_1\zeta_2 = \iota^2}
\end{equation}
where
\[
    \grad c_1 = 2,\ \grad c_2 = 4,\ \grad\iota = \sigma - 1, \text{ and } \grad\zeta_i = \Omega_i.
\]

The restrictions of various elements from the equivariant
cohomology of $B U(2)$ are the following.
\begin{align*}
    \rho(\cw) &= \zeta_1\zeta_2^2 c_2 & \rho(\cxw) &= \zeta_0^2\zeta_1 c_1 \\
    \rho(\cl) &= \zeta_1 c_1 & \rho(\cxl) &= \zeta_0\zeta_2 c_1 \\
\end{align*}

The main calculation we need is the following.

\begin{proposition}\label{prop:rho iso}
\[
    P^\gr\tensorS H_\rho^{RO(\GG)}(S^0;\Z) \iso 
    \Z[\cw,\cl,\iota^{\pm 1},\zeta_i^{\pm 1}]
    \big/\rels{\zeta_0\zeta_1\zeta_2 = \iota^2}
\]
\end{proposition}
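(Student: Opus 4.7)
The plan is to compute the left-hand side directly from the presentation of $P^\gr$ given in Definition 4.1, since $-\tensorS \Z[\iota^{\pm 1}]$ applied to a quotient of an $\HS$-algebra by an ideal is the quotient of the base-changed algebra by the image of that ideal. The first task is to identify the images under $\rho \colon \HS \to H_\rho^{RO(\GG)}(S^0;\Z) = \Z[\iota^{\pm 1}]$ of the only three $\HS$-elements appearing in the relations, namely $\xi$, $\kappa$, and $e^2$. Using that $\xi$ has grading $2\sigma - 2$ and restricts nonequivariantly to $1$, the regrading convention from Section~\ref{sec:change of grading} forces $\rho(\xi) = \iota^2$. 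Since $\kappa \in \HS^0$ restricts to $0$, we have $\rho(\kappa) = 0$, and since $\rho(e^2)$ lands in $H^2(S^0;\Z) = 0$, we also get $\rho(e^2) = 0$ for degree reasons alone.

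Next I would substitute these values into the three defining relations of $P^\gr$. The resulting presentation of $P^\gr\tensorS\Z[\iota^{\pm 1}]$ is the quotient of $\Z[\iota^{\pm 1}][\zeta_0,\zeta_1,\zeta_2,\cw,\cxw,\cl,\cxl]$ by the relations
\begin{align*}
    \zeta_0\zeta_1\zeta_2 &= \iota^2, \\
    \zeta_1\cxl &= \zeta_0\zeta_2\cl, \\
    \zeta_2^2\cxw &= \zeta_0^2\cw.
\end{align*}
The key observation is that, because $\iota$ is already a unit in the coefficient ring, the first relation exhibits the product $\zeta_0\zeta_1\zeta_2$ as a unit, which forces each $\zeta_i$ individually to be invertible (with, e.g., $\zeta_0^{-1} = \iota^{-2}\zeta_1\zeta_2$).

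With all three $\zeta_i$ invertible, the remaining two relations solve uniquely for the extraneous generators as $\cxl = \zeta_0\zeta_1^{-1}\zeta_2\cl$ and $\cxw = \zeta_0^2\zeta_2^{-2}\cw$, so these can be eliminated, leaving exactly $\Z[\cw,\cl,\iota^{\pm 1},\zeta_i^{\pm 1}]/\rels{\zeta_0\zeta_1\zeta_2 = \iota^2}$, as claimed. The most delicate step will be the degree-tracked identification $\rho(\xi) = \iota^2$ rather than $1$, which depends on distinguishing the class in $H_\rho^{2\sigma-2}(S^0;\Z)$ from the class in $H_\rho^0(S^0;\Z)$; once the regrading framework from Section~\ref{sec:change of grading} is invoked carefully, the rest of the argument is a clean generators-and-relations manipulation.
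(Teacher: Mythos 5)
Your proposal is correct and follows essentially the same route as the paper: base-change the presentation of $P^\gr$ along $\rho$, use $\rho(\xi)=\iota^2$, $\rho(\kappa)=0$, $\rho(e^2)=0$ to simplify the three relations, and then use invertibility of the $\zeta_i$ to eliminate $\cxl$ and $\cxw$. Your write-up is in fact slightly more explicit than the paper's, since you justify why each $\zeta_i$ is individually invertible from the relation $\zeta_0\zeta_1\zeta_2=\iota^2$, a point the paper takes for granted.
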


\begin{proof}
We examine what happens with the relations defining $P^{RO(\GG)}$ when we change
the ground ring from $H_\GG^{RO(\GG)}(S^0)$ to $H^{RO(\GG)}(S^0;\Z)$.
\begin{align*}
    \zeta_0\zeta_1\zeta_2 &= \xi & \text{becomes } & & \zeta_0\zeta_1\zeta_2 &= \iota^2 \\
    \zeta_1 \cxl &= (1-\kappa)\zeta_0\zeta_2 \cl + e^2 & \text{becomes }
        & & \zeta_1 \cxl &= \zeta_0\zeta_2 \cl \\
    \zeta_2^2 \cxw &= (1-\kappa)\zeta_0^2 \cw + e^2 \cxl & \text{becomes }
        & & \zeta_2^2 \cxw &= \zeta_0^2 \cw
\end{align*}
Given the invertibility of the $\zeta_i$, these relations now simply express
$\cxl$, and $\cxw$ in terms of $\cl$ and $\cw$.
The result is then clear.
\end{proof}

\begin{corollary}\label{cor:rho}
The composite
\[
    P^\gr\to H_\GG^\gr(B U(2)_+) \xrightarrow{\rho}
        H_\rho^\gr(BU(2)_+;\Z)
\]
takes a basis for $P^\gr$ over $\HS$
to a basis for $H_\rho^\gr(BU(2)_+;\Z)$ over $H_\rho^{RO(\GG)}(S^0)$.
\end{corollary}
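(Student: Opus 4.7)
The plan is to reduce Corollary~\ref{cor:rho} to Proposition~\ref{prop:rho iso} plus the structural statement (\ref{eqn:nonequivariant}) by noting that the composite in the corollary factors through the extension of scalars $P^\gr \tensorS H_\rho^{RO(\GG)}(S^0;\Z)$.

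First, I would observe that the $\HS$-algebra map $P^\gr \to H_\GG^\gr(BU(2)_+) \xrightarrow{\rho} H_\rho^\gr(BU(2)_+;\Z)$ takes values in an algebra over $H_\rho^{RO(\GG)}(S^0;\Z)$, so by the universal property of base change it factors uniquely as
\[
    P^\gr \xrightarrow{a\mapsto a\tensor 1} P^\gr \tensorS H_\rho^{RO(\GG)}(S^0;\Z) \xrightarrow{\bar\rho} H_\rho^\gr(BU(2)_+;\Z).
\]
Since $P^\gr$ is a free $\HS$-module by Proposition~\ref{prop:freeness}, any basis for $P^\gr$ maps to a basis for $P^\gr \tensorS H_\rho^{RO(\GG)}(S^0;\Z)$ over $H_\rho^{RO(\GG)}(S^0;\Z)$. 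So it is enough to show that $\bar\rho$ is an isomorphism.

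Next, I would compare the presentation of $P^\gr \tensorS H_\rho^{RO(\GG)}(S^0;\Z)$ given by Proposition~\ref{prop:rho iso},
\[
    P^\gr \tensorS H_\rho^{RO(\GG)}(S^0;\Z) \iso \Z[\cw,\cl,\iota^{\pm 1},\zeta_i^{\pm 1}] \big/ \rels{\zeta_0\zeta_1\zeta_2 = \iota^2},
\]
with the description (\ref{eqn:nonequivariant}) of the target,
\[
    H_\rho^\gr(BU(2)_+;\Z) = \Z[c_1,c_2,\iota^{\pm 1},\zeta_i^{\pm 1}] \big/ \rels{\zeta_0\zeta_1\zeta_2 = \iota^2}.
\]
Using the formulas $\rho(\cl) = \zeta_1 c_1$ and $\rho(\cw) = \zeta_1\zeta_2^2 c_2$ computed just above Proposition~\ref{prop:rho iso}, together with the invertibility of the $\zeta_i$, the map $\bar\rho$ sends the generators $\cl$ and $\cw$ of the source to $\zeta_1 c_1$ and $\zeta_1\zeta_2^2 c_2$ and sends $\iota^{\pm 1}, \zeta_i^{\pm 1}$ identically. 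An inverse is defined by $c_1 \mapsto \zeta_1^{-1}\cl$ and $c_2 \mapsto \zeta_1^{-1}\zeta_2^{-2}\cw$, which is well-defined on the quotient since the unique relation $\zeta_0\zeta_1\zeta_2 = \iota^2$ is preserved. This shows that $\bar\rho$ is an isomorphism of $H_\rho^{RO(\GG)}(S^0;\Z)$-algebras.

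Combining the two steps, the composite of the corollary sends a basis of $P^\gr$ over $\HS$ to a basis of $H_\rho^\gr(BU(2)_+;\Z)$ over $H_\rho^{RO(\GG)}(S^0;\Z)$, as claimed. There is no real obstacle here; the only mild bookkeeping point is verifying that the generator change $(c_1,c_2) \leftrightarrow (\cl,\cw)$ is a well-defined isomorphism after inverting the $\zeta_i$, which is immediate given the explicit formulas for $\rho(\cl)$ and $\rho(\cw)$.
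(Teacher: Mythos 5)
Your proposal is correct and follows essentially the same route as the paper: the paper's proof likewise combines Proposition~\ref{prop:rho iso}, the calculation (\ref{eqn:nonequivariant}), the formulas $\rho(\cl)=\zeta_1 c_1$ and $\rho(\cw)=\zeta_1\zeta_2^2 c_2$, and the invertibility of the $\zeta_i$ to get the isomorphism $P^\gr\tensorS H_\rho^{RO(\GG)}(S^0)\iso H_\rho^\gr(BU(2)_+;\Z)$, and then concludes from the freeness of $P^\gr$. You have merely made explicit the base-change factorization and the inverse map, which the paper leaves implicit.
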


\begin{proof}
From the preceding proposition, the calculation (\ref{eqn:nonequivariant}),
the fact that $\rho(\cl) = \zeta_1 c_1$ and $\rho(\cw) = \zeta_1\zeta_2^2 c_2$,
and the invertibility of the $\zeta_i$, we get an isomorphism
\[
    P^\gr\tensorS H_\rho^{RO(\GG)}(S^0) \iso H_\rho^\gr(BU(2)_+;\Z).
\]
The statement in the corollary now follows from the freeness of $P^\gr$.
\end{proof}

\subsection{Restriction to fixed points}\label{subsec:restrict fixed}
We now consider the map
\[
    \phi = (-)^\GG\colon H_\GG^\gr(B U(2)_+)
    \to H_\phi^\gr(B U(2)^\GG_+;\Z)
\]
We expand a bit more on the grading on the right.
Recall that we have
\[
    B U(2)^\GG = B^0 \disjunion B^1 \disjunion B^2
\]
with 
\[
    B^0 \homeo B^2 \homeo BU(2) \quad\text{and}\quad B^1 \homeo BT^2.
\]
We first think of $H^\gr(B U(2)^\GG_+;\Z)$ as
graded on $\Z^3$ via
\[
    H^a(B U(2)^\GG_+;\Z) = H^{a_0}(B^0_+;\Z) \dirsum H^{a_1}(B^1_+;\Z) \dirsum H^{a_2}(B^2_+;\Z)
\]
if $a = (a_0,a_1,a_2) \in \Z^3$. We then have the fixed-point map
\[
    \phi\colon RO(\Pi BU(2)) \to \Z^3
\]
given by
\[
    \textstyle \phi(a + b\sigma + \sum_i m_i\Omega_i)
    = (a - 2m_0, a-2m_1, a-2m_2).
\]
Thus, if $\alpha = a + b\sigma + \sum_i m_i\Omega_i$, then
\[
    H^\alpha(B U(2)^\GG_+;\Z) = 
    H^{a-2m_0}(B^0_+;\Z) \dirsum H^{a-2m_1}(B^1_+;\Z) \dirsum H^{a-2m_2}(B^2_+;\Z).
\]

If we now examine the $i$th summand, we are regrading via a homomorphism
$RO(\Pi BU(2))\to \Z$ whose kernel has a basis given by $\sigma$ and
the $\Omega_{j}$ with $j\neq i$.
There are corresponding invertible elements given by
$e = e^\GG$ and $\zeta_j = \zeta_j^\GG$.
This gives us the calculation
\begin{multline}\label{eqn:fixedpoints}
    H^\gr(B U(2)^\GG)_+;\Z) \iso \\
    \Z[c_1,c_2,e^{\pm 1},\zeta_1^{\pm 1},\zeta_2^{\pm 1}]
    \dirsum \Z[x_1,x_2, e^{\pm 1}, \zeta_0^{\pm 1}, \zeta_2^{\pm 1}]
    \dirsum \Z[c_1,c_2,e^{\pm 1},\zeta_0^{\pm 1},\zeta_1^{\pm 1}]
\end{multline}
where
\begin{align*}
    \grad c_1 &= 2,\ \grad c_2 = 4,\ \grad x_1 = \grad x_2 = 2, \\
    \grad e &= \sigma, \text{ and } \grad\zeta_j = \Omega_j.
\end{align*}

Recall that $H_\phi^{RO(\GG)}(S^0;\Z) \iso \Z[e^{\pm 1}]$.
With this in mind, we can rewrite (\ref{eqn:fixedpoints}) as
\begin{multline}\label{eqn:fixedpoints2}
    H^\gr(B U(2)^\GG)_+;\Z) \iso \\
    H_\phi^{RO(\GG)}(S^0;\Z)[c_1,c_2,\zeta_1^{\pm 1},\zeta_2^{\pm 1}]
    \dirsum H_\phi^{RO(\GG)}(S^0;\Z)[x_1,x_2, \zeta_0^{\pm 1}, \zeta_2^{\pm 1}] \\
    \dirsum H_\phi^{RO(\GG)}(S^0;\Z)[c_1,c_2,\zeta_0^{\pm 1},\zeta_1^{\pm 1}]
\end{multline}

\begin{proposition}\label{prop:fixed point iso}
The map
\[
    P^\gr\tensorS H_\phi^{RO(\GG)}(S^0;\Z) \xrightarrow{\bar\phi} 
    H_\phi^\gr(B U(2)^\GG)_+;\Z)
\]
induced by $\phi$ is an isomorphism of $H_\phi^{RO(\GG)}(S^0;\Z)$-modules.
\end{proposition}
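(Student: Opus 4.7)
Writing $A = P^\gr\tensorS\Z[e^{\pm 1}]$ throughout, the plan is to find three orthogonal idempotents inside $A$ that lift the evident component idempotents of the target decomposition in~(\ref{eqn:fixedpoints2}), and then to analyze each summand separately. First I would record the action of $\phi$ on the ground ring. Since $(1-\kappa)e^2 = -e^2$ in $\HS$ and $e$ becomes invertible, we get $\phi(1-\kappa) = -1$; and $\phi(\xi) = 0$ because $\xi$ lives in grade $2\sigma-2$, which maps to $-2$ under $\phi$, and $H^{-2}(S^0;\Z) = 0$. The three defining relations of $P^\gr$ therefore transform inside $A$ into
\[
  \zeta_0\zeta_1\zeta_2 = 0, \qquad \zeta_1\cxl + \zeta_0\zeta_2\cl = e^2, \qquad \zeta_2^2\cxw + \zeta_0^2\cw = e^2\cxl.
\]

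The heart of the argument is producing the idempotents. By the first relation, $\zeta_1\cxl$ and $\zeta_0\zeta_2\cl$ have zero product; by the second, they sum to $e^2$. Hence $\epsilon_1 := e^{-2}\zeta_0\zeta_2\cl$ and $1-\epsilon_1 = e^{-2}\zeta_1\cxl$ are orthogonal idempotents summing to $1$. Inside $(1-\epsilon_1)A$ one obtains not only $\zeta_0\zeta_2\cl = 0$ but in fact $\zeta_0\zeta_2 = 0$ (because $\zeta_0\zeta_2 \cdot \zeta_1\cxl = 0$), so the third relation makes $\zeta_0^2\cw$ and $\zeta_2^2\cxw$ orthogonal with sum $e^2\cxl = e^4\zeta_1^{-1}$, a unit in $(1-\epsilon_1)A$. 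This yields a further splitting $1-\epsilon_1 = \epsilon_0 + \epsilon_2$ with $\epsilon_0 := e^{-4}\zeta_1\zeta_2^2\cxw$ and $\epsilon_2 := e^{-4}\zeta_0^2\zeta_1\cw$, and a direct computation using Proposition~\ref{prop:fixedformulas} confirms that each $\bar\phi(\epsilon_i)$ is precisely the projection onto the $i$th summand of~(\ref{eqn:fixedpoints2}).

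Once the decomposition $A = \epsilon_0 A \oplus \epsilon_1 A \oplus \epsilon_2 A$ is in hand, each summand is straightforward to analyze. In $\epsilon_i A$ the corresponding $\zeta_i$ vanishes while the other two $\zeta_j$'s become invertible (their inverses can be read off from the idempotent identities themselves). The transformed relations then force the ``dual'' classes $\cxl$ and $\cxw$ to be explicit monomials in $e$ and the invertible $\zeta$'s (and, on the middle summand, in $\cl$), leaving a polynomial ring in the surviving generators. Under $\bar\phi$ this matches the $i$th factor of~(\ref{eqn:fixedpoints2}) via identifications such as $c_1 = \cl\zeta_1^{-1}$ and $c_2 = \cw\zeta_1^{-1}\zeta_2^{-2}$ on $B^0$, and the analogous substitutions on $B^1$ and $B^2$.

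The main obstacle is verifying the idempotent identities $\epsilon_i^2 = \epsilon_i$ and $\sum_i \epsilon_i = 1$; this requires derived consequences of the three defining relations, particularly $\zeta_0^2\zeta_2^2\cl = e^2\zeta_0\zeta_2$ (reduction R3 from the proof of Proposition~\ref{prop:freeness}) and the analogous reductions R5 and R6, all used in establishing freeness via the diamond lemma. Once these derived identities are available, the idempotent calculus and the subsequent identification of each $\epsilon_i A$ with its target summand reduce to bookkeeping.
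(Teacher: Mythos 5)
Your proposal is correct, and at bottom it is the paper's own proof reorganized around an idempotent decomposition of the source. The paper also proceeds by exhibiting preimages of the three component units: its elements $\psi(1,0,0)=e^{-6}\zeta_1^2\zeta_2^2\cxl\cxw$, $\psi(0,1,0)=e^{-2}\zeta_0\zeta_2\cl$, and $\psi(0,0,1)=e^{-6}\zeta_0^2\zeta_1^2\cxl\cw$ are literally equal to your $\epsilon_0$, $\epsilon_1$, $\epsilon_2$ (multiply $\epsilon_0=e^{-4}\zeta_1\zeta_2^2\cxw$ and $\epsilon_2=e^{-4}\zeta_0^2\zeta_1\cw$ by $1-\epsilon_1=e^{-2}\zeta_1\cxl$ and use $\zeta_0\zeta_1\zeta_2=0$). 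The paper then defines a single global inverse on the generators of the target and declares the verification that it is well defined and inverse to $\bar\phi$ ``tedious but straightforward''; your version makes that verification systematic by first proving, inside $A=P^\gr\tensorS\Z[e^{\pm1}]$, that the $\epsilon_i$ are orthogonal idempotents summing to $1$, and then matching each corner ring $\epsilon_iA$ with the corresponding factor of (\ref{eqn:fixedpoints2}). That is a real gain in transparency. Two small corrections. First, the idempotent identities are easier than you suggest: orthogonality is immediate from $\zeta_0\zeta_1\zeta_2=0$, and the sums come straight from the second and third transformed relations, since $\epsilon_0+\epsilon_2=e^{-4}\zeta_1(\zeta_2^2\cxw+\zeta_0^2\cw)=e^{-2}\zeta_1\cxl=1-\epsilon_1$; the derived reductions \ref{red:3}, \ref{red:5}, \ref{red:6} are not needed at that stage and enter only when you check that each $\epsilon_iA$ carries no hidden relations (equivalently, when you verify $\psi_i\bar\phi=\mathrm{id}$ on the images of all seven generators). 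Second, your parenthetical about the middle summand is off: in $\epsilon_1A$ it is $\cl=e^2\zeta_0^{-1}\zeta_2^{-1}$ that becomes a monomial in $e$ and the $\zeta$'s, while $\cxl=e^{-2}(\zeta_0^2\cw+\zeta_2^2\cxw)$ is a sum rather than a monomial, and the surviving free generators there are $\cw$ and $\cxw$ (mapping to $e^2x_1\zeta_0^{-1}\zeta_2$ and $e^2x_2\zeta_0\zeta_2^{-1}$), not $\cl$. With that adjustment the bookkeeping you defer does close up.
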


\begin{proof}
We first record the value of $\bar\phi$ on various elements, where we write elements in the codomain
as triples, using (\ref{eqn:fixedpoints2}).
These values can be read off from the values of $\eta$ given in Proposition~\ref{prop:fixedformulas}.
\begin{align*}
    \bar\phi(\zeta_0) &= (0, \zeta_0, \zeta_0) \\
    \bar\phi(\zeta_1) &= (\zeta_1, 0, \zeta_1) \\
    \bar\phi(\zeta_2) &= (\zeta_2, \zeta_2, 0) \\
    \bar\phi(\cl) &= ( c_1\zeta_1, e^2\zeta_0^{-1}\zeta_2^{-1}, c_1\zeta_1 ) \\
    \bar\phi(\cxl) &= ( e^2\zeta_1^{-1}, (x_1+x_2)\zeta_0\zeta_2, e^2\zeta_1^{-1} ) \\
    \bar\phi(\cw) &= ( c_2\zeta_1\zeta_2^2, e^2x_1\zeta_0^{-1}\zeta_2, e^4\zeta_0^{-2}\zeta_1^{-1} ) \\
    \bar\phi(\cxw) &= ( e^4\zeta_1^{-1}\zeta_2^{-2}, e^2x_2\zeta_0\zeta_2^{-1}, c_2\zeta_0^2\zeta_1 )
\end{align*}
From these we get the following calculations.
\begin{align*}
    \bar\phi(e^{-2}\cl\cxl) &= (c_1, x_1+x_2, c_1) \\
    \bar\phi(e^{-4}\cw\cxw) &= (c_2, x_1x_2, c_2) \\
    \bar\phi( e^{-6}\zeta_1^2\zeta_2^2\cxl\cxw) &= (1,0,0) \\
    \bar\phi( e^{-2}\zeta_0\zeta_2\cl) &= (0,1,0) \\
    \bar\phi( e^{-6}\zeta_0^2\zeta_1^2 \cxl\cw) &= (0,0,1) \\
    \bar\phi(e^{-6}\zeta_1\zeta_2^2\cxl\cxw) &= (\zeta_1^{-1},0,0) \\
    \bar\phi(e^{-6}\zeta_1^2\zeta_2\cxl\cxw) &= (\zeta_2^{-1},0,0) \\
    \bar\phi(e^{-2}\zeta_2\cl(1 - e^{-2}\zeta_1\cxl)) &= (0,\zeta_0^{-1},0) \\
    \bar\phi(e^{-2}\zeta_0\cl(1 - e^{-2}\zeta_1\cxl)) &= (0,\zeta_2^{-1},0) \\
    \bar\phi(e^{-6}\zeta_0\zeta_1^2 \cxl\cw) &= (0,0,\zeta_0^{-1}) \\
    \bar\phi(e^{-6}\zeta_0^2\zeta_1 \cxl\cw) &= (0,0,\zeta_1^{-1}) \\
\end{align*}
We can now define an inverse $\psi$ to $\bar\phi$ as the unique algebra map with the following values:
\begin{align*}
    \psi(1,0,0) &= e^{-6}\zeta_1^2\zeta_2^2\cxl\cxw \\
    \psi(0,1,0) &= e^{-2}\zeta_0\zeta_2\cl \\
    \psi(0,0,1) &= e^{-6}\zeta_0^2\zeta_1^2 \cxl\cw \\
    \psi(\zeta_j,0,0) &= \zeta_j\psi(1,0,0) \text{ etc.} \\
    \psi(\zeta_1^{-1},0,0) &= e^{-6}\zeta_1\zeta_2^2\cxl\cxw \text{ etc.} \\
    \psi(c_1,0,0) &= e^{-2}\cl\cxl\psi(1,0,0) \\
    \psi(c_2,0,0) &= e^{-4}\cw\cxw\psi(1,0,0) \\
    \psi(0,x_1,0) &= e^{-2}\zeta_0\cw\psi(0,\zeta_2^{-1},0) \\
    \psi(0,x_2,0) &= e^{-2}\zeta_2\cxw\psi(0,\zeta_0^{-1},0) \\
    \psi(0,0,c_1) &= e^{-2}\cl\cxl\psi(0,0,1) \\
    \psi(0,0,c_2) &= e^{-4}\cw\cxw\psi(0,0,1)
\end{align*}
We can then check that $\psi$ is well-defined and is the inverse of $\bar\phi$, hence $\bar\phi$ is an isomorphism.
(The checks are tedious but straightforward using the relations in $P^\gr\tensorS H_\phi^{RO(\GG)}(S^0;\Z)$.)
\end{proof}

\begin{corollary}\label{cor:fixedpoints}
The composite
\[
    P^\gr \to H_\GG^\gr(B U(2)_+)
    \xrightarrow{\phi} H_\phi^\gr(B U(2)^\GG_+;\Z)
\]
takes a basis for $P^\gr$ over $\HS$ to a basis for
$H_\phi^\gr(B U(2)^\GG_+;\Z)$ over $H_\phi^{RO(\GG)}(S^0;\Z)$.
\qed
\end{corollary}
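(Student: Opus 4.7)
The plan is to deduce this corollary directly from Proposition~\ref{prop:fixed point iso} together with the freeness established in Proposition~\ref{prop:freeness}. The composite in the statement factors through the extension of scalars as
\[
    P^\gr \longrightarrow P^\gr\tensorS H_\phi^{RO(\GG)}(S^0;\Z) \xrightarrow{\bar\phi} H_\phi^\gr(BU(2)^\GG_+;\Z),
\]
where the first arrow sends $x$ to $x\tensor 1$ and the second is the map shown to be an isomorphism in Proposition~\ref{prop:fixed point iso}. So it suffices to check that a basis for $P^\gr$ over $\HS$ maps, under extension of scalars, to a basis of $P^\gr\tensorS H_\phi^{RO(\GG)}(S^0;\Z)$ over $H_\phi^{RO(\GG)}(S^0;\Z)$.

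This last point is precisely the general principle discussed at the end of \S\ref{sec:change of grading}: if $M^R$ is free over $A^R$ and $\gamma\colon Q\to R$ is a homomorphism, then regrading (equivalently, tensoring up along the associated ring map) produces a free module whose bases correspond, modulo invertible elements, to bases of the original. In our case, $A^R = \HS$ and the target ring $H_\phi^{RO(\GG)}(S^0;\Z) \iso \Z[e^{\pm 1}]$ is obtained from $\HS$ by the fixed-point map $\phi$. Since $P^\gr$ is free over $\HS$ by Proposition~\ref{prop:freeness}, a basis for $P^\gr$ over $\HS$ yields a basis of $P^\gr\tensorS H_\phi^{RO(\GG)}(S^0;\Z)$ over $H_\phi^{RO(\GG)}(S^0;\Z)$.

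Applying the isomorphism $\bar\phi$ of Proposition~\ref{prop:fixed point iso} then transports this basis to a basis of $H_\phi^\gr(BU(2)^\GG_+;\Z)$ over $H_\phi^{RO(\GG)}(S^0;\Z)$, completing the proof. There is no real obstacle here, since all the technical work has been absorbed into the preceding results; the only thing to be careful about is that the map $P^\gr \to H_\GG^\gr(BU(2)_+) \xrightarrow{\phi} H_\phi^\gr(BU(2)^\GG_+;\Z)$ genuinely agrees with $\bar\phi$ precomposed with extension of scalars, which is immediate from the construction of $\bar\phi$ as the map induced by $\phi$.
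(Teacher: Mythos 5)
Your proposal is correct and is exactly the argument the paper intends: the corollary is stated with a \qed because it follows immediately from Proposition~\ref{prop:fixed point iso} together with the freeness of $P^\gr$ (Proposition~\ref{prop:freeness}), in the same way that Corollary~\ref{cor:rho} follows from Proposition~\ref{prop:rho iso}. Your factorization through extension of scalars and the observation that bases are preserved under base change of a free module is precisely the intended reasoning.
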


\subsection{The cohomology of $B U(2)$}
We can now prove Theorem~\ref{thm:A}, which we restate here with slightly more information.

\begin{theorem}\label{thm:main}
$ H_\GG^\gr(B U(2)_+)$ is generated as a commutative algebra over $\HS$
by the elements $\zeta_0$, $\zeta_1$, $\zeta_2$ $\cl$, $\cxl$, $\cw$, and $\cxw$, modulo the relations
\begin{align*}
	\zeta_0\zeta_1\zeta_2 &= \xi \\
	\zeta_1 \cxl  &= (1-\kappa)\zeta_0\zeta_2 \cl + e^2 \\
	\zeta_2^2 \cxw &= (1-\kappa)\zeta_0^2 \cw  + e^2 \cxl .
\end{align*}
It is a free module over $\HS$, with the basis given in
Proposition~\ref{prop:freeness}.
\end{theorem}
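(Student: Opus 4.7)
The plan is to bundle together the pieces assembled in the previous sections. By Corollary~\ref{cor:relations} we already have a ring homomorphism $P^\gr \to H_\GG^\gr(BU(2)_+)$, and by Proposition~\ref{prop:freeness} the source $P^\gr$ is free over $\HS$ with the explicit monomial basis described there. Let $\sX$ denote the image of that basis in $H_\GG^\gr(BU(2)_+)$. It suffices to show that $\sX$ is itself an $\HS$-basis of $H_\GG^\gr(BU(2)_+)$, for then the ring map sends a basis to a basis and must be an isomorphism.

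To verify that $\sX$ is a basis, I would apply Proposition~\ref{prop:proofofbasis} with $X = Y = BU(2)$, noting that $BU(2)$ is a $\GG$-space of finite type. This reduces the task to two checks: that $\rho^*\sX$ is a basis for the nonequivariant cohomology $H_\rho^\gr(BU(2)_+;\Z)$ over $H_\rho^{RO(\GG)}(S^0;\Z)$, and that $\sX^\GG$ is a basis for the fixed-point cohomology $H_\phi^\gr(BU(2)^\GG_+;\Z)$ over $H_\phi^{RO(\GG)}(S^0;\Z)$. Both statements have been arranged in advance: the first is Corollary~\ref{cor:rho}, and the second is Corollary~\ref{cor:fixedpoints}.

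Once these two conditions are in place, Proposition~\ref{prop:proofofbasis} immediately yields that $\sX$ is a basis, whence the map $P^\gr \to H_\GG^\gr(BU(2)_+)$ is an isomorphism of $\HS$-modules. Since this map was already a ring homomorphism, it is an isomorphism of $\HS$-algebras, and the explicit basis from Proposition~\ref{prop:freeness} gives the second assertion of the theorem.

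There is no essential obstacle left in this final proof; the heavy lifting has already been done, first in establishing the freeness of $P^\gr$ via the diamond lemma in Section~\ref{sec:freeness}, and then in the explicit identification of the restriction maps in Propositions~\ref{prop:rho iso} and~\ref{prop:fixed point iso}. If one wishes to flag the subtlest preceding step, it is the construction of the inverse $\psi$ in Proposition~\ref{prop:fixed point iso}, where one must exhibit preimages for every generator on each of the three components of $BU(2)^\GG$; that bookkeeping is what ultimately underpins the whole argument.
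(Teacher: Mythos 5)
Your proposal is correct and follows essentially the same route as the paper: the paper's proof likewise combines Proposition~\ref{prop:proofofbasis} with Corollaries~\ref{cor:rho} and~\ref{cor:fixedpoints} to conclude that the ring map $P^\gr \to H_\GG^\gr(BU(2)_+)$ carries a basis to a basis and is therefore an isomorphism, with the algebra presentation and free basis then read off from Definition~\ref{def:structure} and Proposition~\ref{prop:freeness}.
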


\begin{proof}
Proposition~\ref{prop:proofofbasis} and
Corollaries~\ref{cor:rho} and~\ref{cor:fixedpoints}
imply that a basis of $ P^\gr$ over $\HS$ is taken to a basis
of $ H_\GG^\gr(B U(2)_+)$,
hence the ring map $ P^\gr\to  H_\GG^\gr(B U(2)_+)$
is an isomorphism.
The theorem then follows from the definition of $ P^\gr$
and Proposition~\ref{prop:freeness}.
\end{proof}

\section{Examples of bases}\label{sec:bases}

It may be illuminating to see the locations of some of the basis elements.
The specific basis we use is the one given by Proposition~\ref{prop:freeness},
but the locations of the elements would not change if we used a different basis.
We look at ``$RO(\GG)$ pages,'' that is, 
at the groups graded by cosets $\alpha+RO(\GG)$
for fixed $\alpha\in RO(\Pi BU(2))$.

As the first example, we consider the $RO(\GG)$ grading itself, which we think of as the coset
of gradings $0 + RO(\GG)$. 
Below, we will also list the fixed sets of the basic elements, using the map
\[
    (-)^\GG\colon H_\GG^\gr(B U(2)_+) \to H^{\Z^3}(B U(2)^\GG_+;\Z),
\]
grading the target on $\Z^3$ rather than regrading on $RO(\Pi BU(2))$.
(See \S\ref{subsec:restrict fixed} for more details.)

The beginning of the list of basic elements in the $RO(\GG)$ grading
is as follows, along with the grading of each and its fixed sets.
\begin{align*}
    &x && \grad x && x^\GG \\
    &1 && 0 && (1,1,1) \\
    &\zeta_0\zeta_2\cl && 2\sigma && (0,1,0) \\
    &\zeta_0^2\zeta_1\cw && 4\sigma && (0,0,1) \\
    &\cl\cxl && 2 + 2\sigma && (c_1,x_1+x_2,c_1) \\
    &\zeta_0\zeta_2\cl^2\cxl && 2 + 4\sigma && (0,x_1+x_2,0) \\
    &\zeta_0^2\cl\cw && 2 + 4\sigma && (0,x_1,c_1) \\
    &\zeta_0^3\zeta_2\cl^2\cw && 2 + 6\sigma && (0,x_1,0) \\
    &\cl^2\cxl^2 && 4 + 4\sigma && (c_1^2,(x_1+x_2)^2,c_1^2) \\
    &\cw\cxw && 4 + 4\sigma && (c_2,x_1x_2,c_2) \\
    &\zeta_0\zeta_2\cl^3\cxl^2 && 4 + 6\sigma && (0,(x_1+x_2)^2,0) \\
    &\zeta_0\zeta_2\cl\cw\cxw && 4 + 6\sigma && (0,x_1x_2,0) \\
    &\zeta_0^2\cl^2\cxl\cw && 4 + 6\sigma && (0,x_1(x_1+x_2),c_1^2) \\
    &\zeta_0^3\zeta_2\cl^3\cxl\cw && 4 + 8\sigma && (0,x_1(x_1+x_2),0) \\
    &\zeta_0^2\zeta_1\cw^2\cxw && 4 + 8\sigma && (0,0,c_2)
\end{align*}
We draw the locations of these basis elements (and more) on a grid in which the location $a + b\sigma\in RO(\GG)$
is shown at point $(a,b)$. Because $a$ and $b$ are always even, the spacing of the grid lines is every 2, not 1.
The numbers in each circle represent the number of basis elements in that grading.
\begin{center}
\begin{tikzpicture}[scale=0.4] 
    \draw[step=1cm,gray,very thin] (-0.9,-0.9) grid (4.9,6.9);
    \draw[thick] (-1,0) -- (5,0);
    \draw[thick] (0,-1) -- (0,7);
    \node[right] at (5,0) {$a$};
    \node[above] at (0,7) {$b\sigma$};
    \node[below] at (2,-1) {$RO(\GG)$};

    \node[fill=white,scale=0.7] at (0,0) {1}; \draw (0,0) circle(0.38cm);
    \node[fill=white,scale=0.7] at (0,1) {1}; \draw (0,1) circle(0.38cm);
    \node[fill=white,scale=0.7] at (0,2) {1}; \draw (0,2) circle(0.38cm);

    \node[fill=white,scale=0.7] at (1,1) {1}; \draw (1,1) circle(0.38cm);
    \node[fill=white,scale=0.7] at (1,2) {2}; \draw (1,2) circle(0.38cm);
    \node[fill=white,scale=0.7] at (1,3) {1}; \draw (1,3) circle(0.38cm);

    \node[fill=white,scale=0.7] at (2,2) {2}; \draw (2,2) circle(0.38cm);
    \node[fill=white,scale=0.7] at (2,3) {3}; \draw (2,3) circle(0.38cm);
    \node[fill=white,scale=0.7] at (2,4) {2}; \draw (2,4) circle(0.38cm);

    \node[fill=white,scale=0.7] at (3,3) {2}; \draw (3,3) circle(0.38cm);
    \node[fill=white,scale=0.7] at (3,4) {4}; \draw (3,4) circle(0.38cm);
    \node[fill=white,scale=0.7] at (3,5) {2}; \draw (3,5) circle(0.38cm);

    \node[fill=white,scale=0.8,rotate=45] at (4,4) {\dots};
    \node[fill=white,scale=0.8,rotate=45] at (4,5) {\dots};
    \node[fill=white,scale=0.8,rotate=45] at (4,6) {\dots};

\end{tikzpicture}
\end{center}
Notice that, if we reduce each basis element using the non-regraded map
\[
    \rho\colon H_\GG^\gr(B U(2)_+) \to H^\Z(BU(2)_+;\Z),
\]
we get the familiar basis (shown in the same order as the elements above)
\[
    \{ 1, c_1, c_2, c_1^2, c_1^3, c_1c_2, c_1^2c_2, c_1^4, c_2^2, c_1^5, c_1c_2^2, c_1^3c_2, c_1^4c_2, c_2^3, \dots \}.
\]
The equivariant basis elements
that appear on the diagonal line of gradings $a+b\sigma$ with $a + b = n$
are the ones that reduce to nonequivariant basis elements
with total degree $n$.
We can illustrate the action of $\rho$ as follows, with the lower horizontal
line representing the nonequivariant cohomology of $BU(2)$.
\begin{center}
\begin{tikzpicture}[scale=0.4] 

    \draw[step=1cm,gray,very thin] (-0.9,-0.9) grid (4.9,6.9);
    \draw[thick] (-1,0) -- (5,0);
    \draw[thick] (0,-1) -- (0,7);

    \node[fill=white,scale=0.7] at (0,0) {1}; \draw (0,0) circle(0.38cm);
    \node[fill=white,scale=0.7] at (0,1) {1}; \draw (0,1) circle(0.38cm);
    \node[fill=white,scale=0.7] at (0,2) {1}; \draw (0,2) circle(0.38cm);

    \node[fill=white,scale=0.7] at (1,1) {1}; \draw (1,1) circle(0.38cm);
    \node[fill=white,scale=0.7] at (1,2) {2}; \draw (1,2) circle(0.38cm);
    \node[fill=white,scale=0.7] at (1,3) {1}; \draw (1,3) circle(0.38cm);

    \node[fill=white,scale=0.7] at (2,2) {2}; \draw (2,2) circle(0.38cm);
    \node[fill=white,scale=0.7] at (2,3) {3}; \draw (2,3) circle(0.38cm);
    \node[fill=white,scale=0.7] at (2,4) {2}; \draw (2,4) circle(0.38cm);

    \node[fill=white,scale=0.7] at (3,3) {2}; \draw (3,3) circle(0.38cm);
    \node[fill=white,scale=0.7] at (3,4) {4}; \draw (3,4) circle(0.38cm);
    \node[fill=white,scale=0.7] at (3,5) {2}; \draw (3,5) circle(0.38cm);

    \node[fill=white,scale=0.8,rotate=45] at (4,4) {\dots};
    \node[fill=white,scale=0.8,rotate=45] at (4,5) {\dots};
    \node[fill=white,scale=0.8,rotate=45] at (4,6) {\dots};

    \draw[thick] (2,-3) -- (11,-3);
    \node[fill=white,scale=0.7] at (3,-3) {1}; \draw (3,-3) circle(0.38cm);
    \node[fill=white,scale=0.7] at (4,-3) {1}; \draw (4,-3) circle(0.38cm);
    \node[fill=white,scale=0.7] at (5,-3) {2}; \draw (5,-3) circle(0.38cm);
    \node[fill=white,scale=0.7] at (6,-3) {2}; \draw (6,-3) circle(0.38cm);
    \node[fill=white,scale=0.7] at (7,-3) {3}; \draw (7,-3) circle(0.38cm);
    \node[fill=white,scale=0.7] at (8,-3) {3}; \draw (8,-3) circle(0.38cm);
    \node[fill=white,scale=0.7] at (9,-3) {4}; \draw (9,-3) circle(0.38cm);
    \node[fill=white,scale=0.7] at (10,-3) {4}; \draw (10,-3) circle(0.38cm);

    \draw[->,gray] (-0.5,0.5) -- (2.5,-2.5);
    \draw[->,gray] (-0.5,1.5) -- (3.5,-2.5);
    \draw[->,gray] (-0.5,2.5) -- (4.5,-2.5);
    \draw[->,gray] (-0.5,3.5) -- (5.5,-2.5);
    \draw[->,gray] (-0.5,4.5) -- (6.5,-2.5);
    \draw[->,gray] (-0.5,5.5) -- (7.5,-2.5);
    \draw[->,gray] (-0.5,6.5) -- (8.5,-2.5);
    \draw[->,gray] (-0.5,7.5) -- (9.5,-2.5);
    \node[left] at (1.7,-2) {$\rho$};
    \node[right] at (11,-3) {\dots};

\end{tikzpicture}
\end{center}

The fixed sets listed above illustrate another feature of this basis. The basis elements in any vertical line
restrict to a basis for the fixed sets in a given grading. For example, the four basis elements in gradings of the form
$2 + b\sigma$ have fixed sets that give a basis for
\[
    H^2((B U(2)^\GG_+;\Z) \iso H^2(BU(2)_+;\Z)\dirsum H^2(BT^2_+;\Z)\dirsum H^2(BU(2)_+;\Z).
\]
A more familiar basis for this group would be
\[
    \{ (c_1,0,0), (0,x_1,0), (0,x_2,0), (0,0,c_1) \},
\]
but it's not hard to see that the fixed sets shown in the list above form another basis.

The fact that the basis elements restrict to a nonequivariant basis and to a basis of the fixed sets
is a reflection and illustration of Proposition~\ref{prop:proofofbasis}.

\begin{remark}
One last comment about the $RO(\GG)$-graded page. This is the more familiar $RO(\GG)$-graded cohomology
and forms a subring of the full $RO(\Pi BU(2))$-graded ring.
To describe the multiplicative structure of this subring, in terms of generators and relations,
is doable but complicated.
We get a much simpler description after enlarging the grading as we have done.
See, for example, \cite[Proposition~1.7]{DuggerGrass}, where Dugger calculates the $RO(\GG)$-graded cohomology
of the real Grassmannian with constant $\Z/2$ coefficients, an analogous situation.
He comments that the ``complexity of the \dots\ description is discouraging,''
which we believe is because he is looking at only the $RO(\GG)$-graded part of a larger algebra
whose description would be simpler (and likely very similar to the complex case we have calculated here).
\end{remark}

We give another example, both to show that the arrangement of basis elements varies depending on the page,
and for use in \S\ref{sec:char numbers}.
We look at the coset $\omega + RO(\GG) = \Omega_1 + 2\Omega_2 + RO(\GG)$.
Here is the beginning of the list of basis elements in this page:
\begin{align*}
    &x && \grad x = \Omega_1+2\Omega_2+ {} && x^\GG \\
    &\zeta_1\zeta_2^2 && 0 && (1,0,0) \\
    &\zeta_2^2\cl && 2 && (c_1,1,0) \\
    &\zeta_0\zeta_2^3\cl^2 && 2+2\sigma && (0,1,0) \\
    &\cw && 4 && (c_2,x_1,1) \\
    &\zeta_2^2\cl^2\cxl && 4+2\sigma && (c_1^2,x_1,0) \\
    &\zeta_0\zeta_2 \cl\cw && 4 + 2\sigma && (0,x_1,0) \\
    &\zeta_0^2\zeta_1\cw^2 && 4 + 4\sigma && (0,0,1) \\
    &\zeta_0\zeta_2^3\cl^3\cxl && 4 + 4\sigma && (0, x_1+x_2, 0)
\end{align*}
Here is the arrangement of the basis elements:





\begin{center}
\begin{tikzpicture}[scale=0.4] 
    \draw[step=1cm,gray,very thin] (-0.9,-0.9) grid (4.9,4.9);
    \draw[thick] (-1,0) -- (5,0);
    \draw[thick] (0,-1) -- (0,5);
    \node[right] at (5,0) {$a$};
    \node[above] at (0,5) {$b\sigma$};
    \node[below] at (2,-1) {$\Omega_1+2\Omega_2 + RO(\GG)$};

    \node[fill=white,scale=0.7] at (0,0) {1}; \draw (0,0) circle(0.38cm);

    \node[fill=white,scale=0.7] at (1,0) {1}; \draw (1,0) circle(0.38cm);
    \node[fill=white,scale=0.7] at (1,1) {1}; \draw (1,1) circle(0.38cm);

    \node[fill=white,scale=0.7] at (2,0) {1}; \draw (2,0) circle(0.38cm);
    \node[fill=white,scale=0.7] at (2,1) {2}; \draw (2,1) circle(0.38cm);
    \node[fill=white,scale=0.7] at (2,2) {2}; \draw (2,2) circle(0.38cm);

    \node[fill=white,scale=0.7] at (3,1) {1}; \draw (3,1) circle(0.38cm);
    \node[fill=white,scale=0.7] at (3,2) {3}; \draw (3,2) circle(0.38cm);
    \node[fill=white,scale=0.7] at (3,3) {2}; \draw (3,3) circle(0.38cm);

    \node[fill=white,scale=0.8,rotate=45] at (4,2) {\dots};
    \node[fill=white,scale=0.8,rotate=45] at (4,3) {\dots};
    \node[fill=white,scale=0.8,rotate=45] at (4,4) {\dots};

\end{tikzpicture}
\end{center}
The number of elements on the diagonal lines where $a+b$ is constant is the same as before, 
but the number on vertical lines
is different. This time, the fixed-set map has the form
\begin{multline*}
    (-)^\GG\colon H_\GG^{\Omega_1+2\Omega_2+a+b\sigma}(B U(2)_+) \to \\
    H^{a}(BU(2)_+;\Z) \dirsum H^{a-2}(BT^2_+;\Z) \dirsum H^{a-4}(BU(2)_+;\Z).
\end{multline*}
So, for example, the five basis elements in gradings of the form $\Omega_1+2\Omega_2+4+b\sigma$ give a basis of
\[
    H^{4}(BU(2)_+;\Z) \dirsum H^{2}(BT^2_+;\Z) \dirsum H^{0}(BU(2)_+;\Z),
\]
for which a more familiar basis would be
\[
    \{ (c_1^2,0,0), (c_2,0,0), (0,x_1,0), (0,x_2,0), (0,0,1) \}.
\]

Finally, an example to dispel the idea that may be raised by the preceding two examples,
that the basis elements always lie along three diagonally rising lines.
A similar analysis in the gradings $\Omega_0+RO(\GG)$ shows the following
arrangement of basis elements.
\begin{center}
\begin{tikzpicture}[scale=0.4] 
    \draw[step=1cm,gray,very thin] (-0.9,-0.9) grid (4.9,5.9);
    \draw[thick] (-1,0) -- (5,0);
    \draw[thick] (0,-1) -- (0,6);
    \node[right] at (5,0) {$a$};
    \node[above] at (0,6) {$b\sigma$};
    \node[below] at (2,-1) {$\Omega_0 + RO(\GG)$};

    \node[fill=white,scale=0.7] at (0,0) {1}; \draw (0,0) circle(0.38cm);
    \node[fill=white,scale=0.7] at (0,1) {1}; \draw (0,1) circle(0.38cm);

    \node[fill=white,scale=0.7] at (1,1) {2}; \draw (1,1) circle(0.38cm);
    \node[fill=white,scale=0.7] at (1,2) {2}; \draw (1,2) circle(0.38cm);

    \node[fill=white,scale=0.7] at (2,2) {3}; \draw (2,2) circle(0.38cm);
    \node[fill=white,scale=0.7] at (2,3) {3}; \draw (2,3) circle(0.38cm);

    \node[fill=white,scale=0.7] at (3,3) {4}; \draw (3,3) circle(0.38cm);
    \node[fill=white,scale=0.7] at (3,4) {4}; \draw (3,4) circle(0.38cm);

    \node[fill=white,scale=0.8,rotate=45] at (4,4) {\dots};
    \node[fill=white,scale=0.8,rotate=45] at (4,5) {\dots};

\end{tikzpicture}
\end{center}

\section{Units and dual elements}\label{sec:units}

We now determine the group of units of $H_\GG^\gr(BU(2)_+)$.
We begin with the following, which narrows down where we need to look.

\begin{proposition}
The only units of $H_\GG^\gr(BU(2)_+)$ live in grading $0$.
\end{proposition}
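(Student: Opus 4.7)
The plan is to combine constraints from the two ring homomorphisms $\rho$ and $\phi$ studied in the preceding subsections. If $u \in H_\GG^\alpha(BU(2)_+)$ is a unit with inverse $v \in H_\GG^{-\alpha}(BU(2)_+)$, then $\rho(u)\rho(v)=1$ and $\phi(u)\phi(v)=1$. Hence $\rho(u)$ is a unit in $H_\rho^\alpha(BU(2)_+;\Z)$, and each of the three components of $\phi(u)$ is a unit in its summand of $H_\phi^\alpha(BU(2)^\GG_+;\Z)$. I will show that each of these four conditions forces $\alpha$ into a subgroup of $RO(\Pi BU(2))$, and that the intersection of these subgroups is $\{0\}$.

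First I would identify the units of each target ring. By (\ref{eqn:nonequivariant}), $H_\rho^\gr(BU(2)_+;\Z)$ is a polynomial ring in $c_1, c_2$ over the Laurent ring $\Z[\iota^{\pm 1},\zeta_0^{\pm 1},\zeta_1^{\pm 1},\zeta_2^{\pm 1}]/\langle \zeta_0\zeta_1\zeta_2 = \iota^2\rangle$. Since $c_1$ and $c_2$ are not units, any unit is of the form $\pm\iota^a\zeta_0^{b_0}\zeta_1^{b_1}\zeta_2^{b_2}$, forcing $\alpha$ into $\ker\rho = \langle\sigma-1,\Omega_0,\Omega_1,\Omega_2\rangle$. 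By (\ref{eqn:fixedpoints}), each summand of $H_\phi^\gr(BU(2)^\GG_+;\Z)$ has the same shape: a polynomial ring in two positive-degree generators ($c_1,c_2$ or $x_1,x_2$) over a Laurent ring in $e$ and in two of the $\zeta_j$. The units therefore lie in $\langle\sigma,\Omega_j: j\ne i\rangle$ for the $i$-th summand.

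Now I would solve the resulting linear system. Using the free basis $\{1,\sigma,\Omega_1,\Omega_2\}$ for $RO(\Pi BU(2))$ obtained by substituting $\Omega_0 = 2\sigma - 2 - \Omega_1 - \Omega_2$, write
\[
    \alpha = a' + b'\sigma + m_1\Omega_1 + m_2\Omega_2.
\]
The $\phi_0$-condition ($\alpha \in \langle\sigma,\Omega_1,\Omega_2\rangle$) forces $a'=0$. Rewriting $\Omega_0$ in the chosen basis shows that $\langle\sigma,\Omega_0,\Omega_2\rangle = \{\alpha : a' = 2m_1\}$ and $\langle\sigma,\Omega_0,\Omega_1\rangle = \{\alpha : a' = 2m_2\}$, so the $\phi_1$- and $\phi_2$-conditions then pin down $m_1=m_2=0$. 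Finally, the $\rho$-condition gives $a'+b' = \rho(\alpha) = 0$, so $b'=0$. Hence $\alpha = 0$.

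The only potential obstacle is the bookkeeping around the relation $\Omega_0+\Omega_1+\Omega_2 = 2\sigma-2$: one must pick a free basis and translate each subgroup into linear conditions on the chosen coordinates. Once that is done, the intersection of the four subgroups is immediate.
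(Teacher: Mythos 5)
Your proof is correct, but it takes a genuinely different route from the paper's. The paper also starts by applying $\rho$ to force the grading onto the zero diagonal of its $RO(\GG)$-page, but from there it leans on the explicit free $\HS$-basis of Proposition~\ref{prop:freeness}: on that page there is exactly one basis element $z=\zeta_0^{m_0-m}\zeta_1^{m_1-m}\zeta_2^{m_2-m}$ sitting on the zero diagonal, and the structure of $\HS$ (that $\xi^\GG=0$ in the positive wedge, and that $\rho(\tau(\iota^{-n}))$ is $0$ or $2$ in the negative wedge) rules out a unit lying strictly above or strictly below $z$; running the same argument for $u^{-1}$ then forces the grading to be $0$. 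You instead observe that a homogeneous unit must map to homogeneous units under both ring maps $\rho$ and $\phi$, that the targets \eqref{eqn:nonequivariant} and \eqref{eqn:fixedpoints} are (Laurent extensions of) polynomial rings over $\Z$ whose homogeneous units can only occur in gradings lying in the kernel of the relevant regrading homomorphism, and that the four resulting linear conditions on $RO(\Pi BU(2))$ intersect in $\{0\}$. This is arguably more elementary: it uses only the classical nonequivariant computations of $BU(2)$ and its fixed set together with the multiplicativity of $\rho$ and $\phi$, and in particular does not depend on the freeness of $P^\gr$ or on Theorem~\ref{thm:A}. What the paper's version buys is that it stays entirely within the basis picture that is used immediately afterwards to enumerate the sixteen units in grading $0$, whereas your argument would hand off to that computation at the same point. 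The only bookkeeping to watch in your write-up is the translation of each $\ker\gamma_i$ into coordinates for a chosen free basis of $RO(\Pi BU(2))$, which you have done correctly.
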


\begin{proof}
Suppose that $u$ is a unit and lives in grading
\[
    \gamma = \grad u = m_0\Omega_0 + m_1\Omega_1 + m_2\Omega_2 + \alpha
\]
where $\alpha\in RO(\GG)$.
Because $\rho(u)$ must be a unit in $H^\Z(BU(2)_+)$, so $\rho(u) = \pm 1\in H^0(BU(2)_+)$,
we must have $\rho(\alpha) = 0$,
hence $\alpha = k(1-\sigma)$ for some $k\in\Z$.
That is, $u$ is on the ``0'' diagonal line in $H_\GG^{\gamma+RO(\GG)}(BU(2)_+)$.

Let $m = \min\{m_0,m_1,m_2\}$. There is one basis element in $H_\GG^{\gamma+RO(\GG)}(BU(2)_+)$
on the 0 diagonal line, which is $z = \zeta_0^{m_0-m}\zeta_1^{m_1-m}\zeta_2^{m_2-m}$.
All other basis elements are above and to the right of that, strictly so in at least one direction.

If $u$ were strictly above and to the left of $z$, then, when we express $u$ in terms of the basis,
only $z$ could appear, and we would have $u = a\xi^n z$ for some $a\in\Z$ and $n\geq 1$.
But then we would have
\[
    u^\GG = (a\xi^n z)^\GG = 0,
\]
because $\xi^\GG = 0$,
while $u^\GG$ must be a unit because $u$ is. Hence $u$ cannot be strictly above and to the left of $z$.

If $u$ were strictly below and to the right of $z$, again think of writing $u$ in terms
of the basis. Several basis elements may be involved in that linear combination, but the
coefficients of all except $z$ would vanish on applying $\rho$. The coefficient
of $z$ would have to have the form $a\tau(\iota^{-n})$ for some $a\in\Z$ and $n\geq 1$.
But
\[
    \rho(\tau(\iota^{-n})) =
    \begin{cases}
        2 & \text{if $n$ is even} \\
        0 & \text{if $n$ is odd}
    \end{cases}
\]
so we cannot have $\rho(u) = \pm 1$ as we need because $u$ is a unit.

Hence, $u$ must be in the same grading as $z$, which implies that $k = 2m$.

Applying the same argument to $u^{-1}$,
which lives in grading $-m_0\Omega_0-m_1\Omega_1-m_2\Omega_2-k(1-\sigma)$,
we see that $k = 2M$ where
$M = \max\{m_0,m_1,m_2\}$. Hence, $m = M = m_0 = m_1 = m_2$ and $\gamma = 0$.
That is, $u$ must live in grading 0.
\end{proof}

Now we identify the units in $H_\GG^0(BU(2)_+)$.

\begin{definition}
Let
\begin{align*}
    \epsilon_\lambda &= e^{-2}\kappa \zeta_0\zeta_2 \cl \\
\intertext{and}
    \epsilon_\omega &= e^{-4}\kappa\zeta_0^2\zeta_1\cw.
\end{align*}
\end{definition}

One reason for picking out these elements is the following:
If we look at the basis for $H_\GG^{RO(\GG)}(BU(2)_+)$ given in \S\ref{sec:bases},
we can see that
\[
    H_\GG^0(BU(2)_+) \iso A(\GG)\dirsum\Z\dirsum\Z
\]
with generators $1$, $\epsilon_\lambda$, and $\epsilon_\omega$.
We can rewrite this as $\Z^4$ with generators $1$, $g$, $\epsilon_\lambda$, and $\epsilon_\omega$.

\begin{proposition}
There are exactly 16 units in $H_\GG^\gr(BU(2)_+)$, given by
\[
    \pm(1-\kappa)^{a_1}(1-\epsilon_\lambda)^{a_2}(1-\epsilon_\omega)^{a_3}
\]
where $a_1, a_2, a_3\in \{0,1\}$.
\end{proposition}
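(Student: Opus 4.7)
The plan is to work entirely inside $H_\GG^0(BU(2)_+)$ and classify units directly from an explicit multiplication table. By the preceding proposition, every unit lies in grading $0$, and the identification
\[
 H_\GG^0(BU(2)_+) \iso A(\GG) \dirsum \Z\{\epsilon_\lambda\} \dirsum \Z\{\epsilon_\omega\}
\]
noted immediately before the statement lets us write every element uniquely as $u = a + bg + c\epsilon_\lambda + d\epsilon_\omega$ with $a,b,c,d \in \Z$.

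The first task is to pin down the multiplication table. Applying $\eta$ and using Proposition~\ref{prop:fixedformulas} together with the identities $\kappa\xi = 0$, $\kappa e^2 = 2e^2$, and $ge^2 = 0$ that appeared in the proof of Corollary~\ref{cor:relations}, one computes $\eta(\epsilon_\lambda) = (0,\kappa,0)$ and $\eta(\epsilon_\omega) = (0,0,\kappa)$. Since $\eta$ is injective in grading $0$ by Corollary~\ref{cor:even degrees}, these formulas together with $\kappa^2 = 2\kappa$ and $g\kappa = 0$ in $A(\GG)$ immediately give the full multiplication table
\[
 \epsilon_\lambda^2 = 2\epsilon_\lambda,\quad \epsilon_\omega^2 = 2\epsilon_\omega,\quad \epsilon_\lambda\epsilon_\omega = 0,\quad g\epsilon_\lambda = g\epsilon_\omega = 0,\quad g^2 = 2g.
\]
From this, each of $1-\kappa$, $1-\epsilon_\lambda$, $1-\epsilon_\omega$ squares to $1$ (via the identity $(1-x)^2 = 1$ whenever $x^2 = 2x$) and they pairwise commute, so together with $-1$ they generate a subgroup of the unit group isomorphic to $(\Z/2)^4$, verifying that all $16$ listed elements are units.

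For the reverse inclusion, expand the relation $uu' = 1$ for $u = a + bg + c\epsilon_\lambda + d\epsilon_\omega$ and $u' = a' + b'g + c'\epsilon_\lambda + d'\epsilon_\omega$ against the multiplication table to obtain the system
\[
 aa' = 1,\quad ab' + a'b + 2bb' = 0,\quad ac' + a'c + 2cc' = 0,\quad ad' + a'd + 2dd' = 0.
\]
The first equation forces $a = a' = \pm 1$, and after possibly negating $u$ we may assume $a = 1$. Each of the remaining three equations then reads $x + x' + 2xx' = 0$, i.e.\ $(1+2x)x' = -x$; since $(1+2x)$ must divide both $x$ and $1+2x$ it divides $1$, so $x \in \{0,-1\}$. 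Thus $(b,c,d) \in \{0,-1\}^3$, yielding exactly $8$ units with $a = 1$ and $16$ units in total; a short verification, using $1-g = -(1-\kappa)$ and $(1-\kappa)(1-\epsilon_\lambda) = -(1-g-\epsilon_\lambda)$, matches them bijectively with the listed expressions $\pm(1-\kappa)^{a_1}(1-\epsilon_\lambda)^{a_2}(1-\epsilon_\omega)^{a_3}$.

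The main obstacle I anticipate is the clean determination of $\eta(\epsilon_\lambda)$ and $\eta(\epsilon_\omega)$. The formal symbol $e^{-2}\kappa\zeta_0\zeta_2\cl$ does not on its own specify a unique element of $H_\GG^0(BU(2)_+)$, because $e^2$-multiplication has kernel $\Z\{g\}$ in each relevant Mackey component; one must use $\kappa^2 = 2\kappa$ (equivalently $g\kappa = 0$) to replace the naive coefficient $2$ by $\kappa$ in the middle component of the fixed-point restriction. Once that subtlety is handled, the remainder of the argument is elementary algebra.
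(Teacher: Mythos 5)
Your proposal is correct, and its second half --- expanding $uu'=1$ against the multiplication table, forcing $a=a'=\pm1$, and solving $(1+2x)(1+2x')=1$ coefficientwise --- is exactly the argument in the paper. The one genuine difference is how you establish the multiplication table: the paper proves $\epsilon_\omega^2 = 2\epsilon_\omega$ and $\epsilon_\lambda\epsilon_\omega = 0$ by direct manipulation of the defining relations of $P^\gr$ (and cites the $BU(1)$ computation for $\epsilon_\lambda^2 = 2\epsilon_\lambda$), whereas you compute $\eta(\epsilon_\lambda) = (0,\kappa,0)$ and $\eta(\epsilon_\omega) = (0,0,\kappa)$ and invoke injectivity of $\eta$ in even gradings. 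Both routes work; yours is arguably cleaner in that it makes the idempotent-like structure visible at once (the three units $1-\kappa$, $1-\epsilon_\lambda$, $1-\epsilon_\omega$ restrict to $(1-\kappa,1-\kappa,1-\kappa)$, $(1,1-\kappa,1)$, $(1,1,1-\kappa)$ under $\eta$, from which everything is transparent), at the cost of leaning on Proposition~\ref{prop:fixedformulas} rather than staying internal to the presentation. One small correction: your worry that $e^{-2}\kappa\zeta_0\zeta_2\cl$ is not a well-defined element is unfounded --- $e^{-2}\kappa$ is a specific generator of $\HS^{-2\sigma}$ (characterized by $e^2\cdot e^{-2}\kappa = \kappa$), so $\epsilon_\lambda$ is an honest product, and the middle component of $\eta(\epsilon_\lambda)$ is $e^{-2}\kappa\cdot\bigl(e^2+\xi(x_1+x_2)\bigr) = \kappa$ with no ambiguity to resolve.
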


\begin{proof}
The preceding proposition tells us that any units must live in $H_\GG^0(BU(2)_+)$,
so we just need to find the units in that ring.
We first verify that the elements listed in the proposition are units.

From the fact that $g\cdot e^{-n}\kappa = 0$ we get $g\epsilon_\lambda = 0$ and
$g\epsilon_\omega = 0$.
We also have the following relations:
\begin{align*}
    \epsilon_\lambda^2 &= 2\epsilon_\lambda \\
    \epsilon_\omega^2 &= 2\epsilon_\omega \\
    \epsilon_\lambda \epsilon_\omega &= 0.
\end{align*}
The first line follows from the similar result in the cohomology of $BU(1)$
given in \cite[Proposition~11.5]{Co:InfinitePublished}.
For the second, we compute
\begin{align*}
    \epsilon_\omega^2
    &= (e^{-4}\kappa\zeta_0^2\zeta_1\cw)^2 \\
    &= 2e^{-8}\kappa \zeta_0^2\zeta_1^2\cw \cdot \zeta_0^2\cw \\
    &= 2e^{-8}\kappa \zeta_0^2\zeta_1^2\cw((1-\kappa)\zeta_2^2\cxw + e^2\cxl) \\
    &= 2e^{-6}\kappa \zeta_0^2\zeta_1\cw \cdot \zeta_1\cxl \\
    &= 2e^{-6}\kappa \zeta_0^2\zeta_1\cw((1-\kappa)\zeta_0\zeta_2\cl + e^2) \\
    &= 2e^{-4}\kappa \zeta_0^2\zeta_1\cw \\
    &= 2\epsilon_\omega.
\end{align*}
For the vanishing of the product, we compute
\[
    \epsilon_\lambda \epsilon_\omega
    = e^{-2}\kappa \zeta_0\zeta_2 \cl \cdot e^{-4}\kappa\zeta_0^2\zeta_1\cw 
    = 2e^{-6}\kappa \xi \zeta_0^2 \cl\cw 
    = 0.
\]
With the fact that $\kappa^2 = 2\kappa$, we then see that each of $1-\kappa$,
$1-\epsilon_\lambda$, and $1-\epsilon_\omega$ squares to 1, hence the elements
listed in the proposition are indeed units.

Now we show that these are the only units.
As mentioned before the proposition, $H_\GG^0(BU(2)_+)\iso \Z^4$
with generators $1$, $g$, $\epsilon_\lambda$, and $\epsilon_\omega$,
so any units would have the form
\[
    u = a_1 + a_2g + a_3\epsilon_\lambda + a_4\epsilon_\omega
\]
for some integers $a_i\in\Z$.
We set the product of two of these equal to 1:
\begin{align*}
    1 &= (a_1 + a_2g + a_3\epsilon_\lambda + a_4\epsilon_\omega)
          (b_1 + b_2g + b_3\epsilon_\lambda + b_4\epsilon_\omega) \\
    &= a_1b_1 + (a_1b_2 + a_2b_1 + 2a_2b_2)g \\
    &\qquad\qquad + (a_1b_3 + a_3b_1 + 2a_3b_3)\epsilon_\lambda \\
    &\qquad\qquad + (a_1b_4 + a_4b_1 + 2a_4b_4)\epsilon_\omega
\end{align*}
Because $1$, $g$, $\epsilon_\lambda$, and $\epsilon_\omega$ are linearly independent,
we must have $a_1b_1 = 1$, hence $a_1 = b_1 = \pm 1$. Consider the case where they are equal to 1.
Then we have
\[
    b_2 + a_2 + 2a_2b_2 = 0.
\]
Multiplying by 2 and adding 1 gives
\[
    (1 + 2a_2)(1 + 2b_2) = 1.
\]
Since the two factors are integers, we must have $1 + 2a_2 = 1 + 2b_2 = \pm 1$,
from which we get $a_2 = b_2 = 0$ or $-1$.

Similarly, still assuming $a_1 = b_1 = 1$, we must have $a_3 = b_3 = 0$ or $-1$ and
$a_4 = b_4 = 0$ or $-1$.
If $a_1 = b_1 = -1$, we would get $a_2 = b_2 = 0$ or $1$, and similarly for the others.

Let $z_i = 0$ or $1$ for $i=1$, 2, and 3, then our units are the 16 elements
\[
    \pm(1 - z_1g - z_2\epsilon_\lambda - z_3\epsilon_\omega)
    = \pm(1-z_1g)(1-z_2\epsilon_\lambda)(1-z_3\epsilon_\omega).
\]
Rewriting $1-g = -(1-\kappa)$, we get the units listed in the proposition.
\end{proof}

\begin{remark}
We noted in the proof that $1-g = -(1-\kappa)$, so we could use either notation for this unit.
We tend to prefer $1-\kappa$ because it restricts to 1 nonequivariantly, whereas $1-g$ restricts to $-1$.
\end{remark}

For many purposes, it is useful to look at generators
associated to the dual bundle $\omega\dual$ rather than $\omega$ itself.
There is a $\GG$-involution
\[
    \delta\colon BU(2)\to BU(2)
\]
classifying $\omega\dual$, and we let
\begin{align*}
    \cwd &= \delta^*\cw = e(\omega\dual) \\
    \cxwd &= \delta^*\cxw = e(\chi\omega\dual) \\
    \cld &= \delta^*\cl = e(\lambda\dual) \\
\intertext{and}
    \cxld &= \delta^*\cxl = e(\chi\lambda\dual).
\end{align*}
Because $\delta^*$ is an algebra isomorphism, these elements can be used as generators
in place of $\cw$, etc.,
satisfying the relations listed in Theorem~\ref{thm:main}, \textit{mutatis mutandis.}

On the other hand, we should be able to write these dual elements in terms of the original
generators and we now work out those expressions.
Recall that, nonequivariantly, the Chern classes of the dual of the tautological bundle are
\[
    \cd_2 = c_2 \qquad\text{and}\qquad \cd_1 = -c_1.
\]
Here are the analogous results in the equivariant case.

\begin{proposition}
The dual classes can be written in terms of the usual generators as
\begin{align*}
    \cwd &= (1-\epsilon_\lambda)\cw \\
    \cxwd &= (1-\epsilon_\lambda)\cxw \\
    \cld &= -(1-\epsilon_\lambda)\cl \\
    \cxld &= -(1-\kappa)(1-\epsilon_\lambda)\cxl.
\end{align*}
\end{proposition}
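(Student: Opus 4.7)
The plan is to reduce to fixed points via Corollary~\ref{cor:even degrees}: all four identities involve only classes of even grading (Euler classes, together with elements of $\HS$), so it suffices to verify each after applying $\eta\colon H_\GG^\gr(BU(2)_+)\to H_\GG^\gr(BU(2)^\GG_+)$.

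The first step is to compute $\eta(\epsilon_\lambda)$ directly from its definition $\epsilon_\lambda = e^{-2}\kappa\zeta_0\zeta_2\cl$, using the formulas of Proposition~\ref{prop:fixedformulas} together with the relations in $\HS$ recorded in the proof of Corollary~\ref{cor:relations}, namely $(1-\kappa)\xi=\xi$ (so $\kappa\xi=0$) and $(1-\kappa)e^2=-e^2$ (so $\kappa e^2 = 2e^2$). A short calculation, done component by component in the triple decomposition of Proposition~\ref{prop:fixedpoints}, gives $\eta(\epsilon_\lambda) = (0,\kappa,0)$, and hence $\eta(1-\epsilon_\lambda) = (1,\,1-\kappa,\,1)$.

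The second step is to compute $\eta$ of each of the four dual classes directly. The key geometric observation is that both one-dimensional irreducible complex $\GG$-representations are self-dual, so the dual of any complex $\GG$-representation has the same isotypic decomposition into trivial and sign summands. Consequently the involution $\delta\colon BU(2)\to BU(2)$ preserves the component decomposition $B^0\sqcup B^1\sqcup B^2$, and on each component restricts to ordinary nonequivariant dualization on $BU(2)$ or on $BT^2$. Repeating the arguments of Proposition~\ref{prop:fixedformulas} for the dual bundles, or equivalently applying $\delta^*$ to the triples there, I obtain expressions for $\eta(\cwd)$, $\eta(\cxwd)$, $\eta(\cld)$, $\eta(\cxld)$ that differ from $\eta(\cw)$, $\eta(\cxw)$, $\eta(\cl)$, $\eta(\cxl)$ by the substitutions $c_1\mapsto -c_1$, $c_2\mapsto c_2$, $x_i\mapsto -x_i$.

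The final step is to compare these triples to the images of the right-hand sides under $\eta$, component by component. On the middle component, the factor $1-\kappa$ supplied by $\eta(1-\epsilon_\lambda)$ produces exactly the sign change $e^2\mapsto -e^2$ needed to match the substitution $x_i\mapsto -x_i$, and $(1-\kappa)^2 = 1$ absorbs the double appearance in the formula for $\cxld$. On the outer components, where $\eta(1-\epsilon_\lambda)$ equals $1$, the formulas for $\cld$ and $\cxld$ match immediately (using also $(1-\kappa)e^2 = -e^2$ to account for the leading $-$ and the extra $(1-\kappa)$ in the $\cxld$ formula), while the formulas for $\cwd$ and $\cxwd$ differ on their face by a term of the form $\pm 2e^2\xi c_1\cdot(\text{unit})$; these terms vanish thanks to $2e^2\xi = \kappa e^2\xi = 0$. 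The main point where care is required is precisely this interplay between the signs introduced by $\delta^*$ on the outer components and the relations $(1-\kappa)\xi = \xi$, $(1-\kappa)e^2 = -e^2$ in $\HS$, which is what dictates exactly when the factor $(1-\kappa)$ must or must not appear in the four formulas.
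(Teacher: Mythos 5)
Your proof is correct and follows essentially the same route as the paper's: both rely on the injectivity of $\eta$ in even gradings (Corollary~\ref{cor:even degrees}) and on computing the fixed-set images of the dual Euler classes, which is exactly how the paper handles $\cwd$ and $\cxwd$ (including the use of $2e^2\xi=0$ to absorb the sign on $c_1$ in the outer components). The only difference is that you treat $\cld$ and $\cxld$ by the same direct fixed-set comparison, whereas the paper imports those two formulas from the $BU(1)$ case in \cite{Co:InfinitePublished} by pulling back along the classifying map of $\lambda$; your explicit identification $\eta(\epsilon_\lambda)=(0,\kappa,0)$ and the substitution $c_1\mapsto -c_1$, $x_i\mapsto -x_i$ package the same computation somewhat more uniformly.
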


\begin{proof}
The formulas for $\cld$ and $\cxld$ follow from the similar formulas
in the cohomology of $BU(1)$ given in \cite[Proposition~11.6]{Co:InfinitePublished},
by pulling back along the classifying map for $\lambda$.

For $\cwd$ and $\cxwd$, these elements are in even gradings,
where we know that $\eta$ is injective. By an argument similar to
that used in Proposition~\ref{prop:fixedformulas} and in \cite{Co:InfinitePublished}, we have
\begin{align*}
    \eta(\cwd) &= (c_2\zeta_1\zeta_2^2, -x_2(e^2-\xi x_2)\zeta_0^{-1}\zeta_2,
                    (e^4-e^2\xi c_1 + \xi^2c_2)\zeta_0^{-2}\zeta_1^{-1}) \\
        &= \eta(\cw) - (0, 2e^2x_2, 0) \\
        &= \eta(\cw) - \eta(e^{-2}\kappa \zeta_0\zeta_2\cl\cw) \\
\intertext{using that $2e^2\xi = 0$, so}
        \cwd &= \cw - e^{-2}\kappa \zeta_0\zeta_2\cl\cw \\
            &= (1-\epsilon_\lambda)\cw.
\end{align*}
When we apply $\chi$, $\cl$ is fixed because $\ext^2(\chi\omega) = \ext^2\omega$, so we get
\[
    \cxwd = (1-\epsilon_\lambda)\cxw
\]
as well.
\end{proof}

Note that $\rho(1-\epsilon_\lambda) = 1$ and $\rho(1-\kappa) = 1$, so these relations restrict
to the nonequivariant ones recalled just before the proposition.

The units can be rewritten in terms of the dual classes using the following.

\begin{corollary}
We have
\begin{align*}
    \epsilon_\lambda &= e^{-2}\kappa\zeta_0\zeta_2\cld \\
\intertext{and}
    \epsilon_\omega &= e^{-4}\kappa\zeta_0^2\zeta_1\cwd.
\end{align*}
\end{corollary}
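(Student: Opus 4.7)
The plan is to derive both identities by direct substitution of the expressions for $\cld$ and $\cwd$ furnished by the preceding proposition into the right-hand sides, and then simplify using the multiplicative relations already established for $\epsilon_\lambda$ and $\epsilon_\omega$ in the proof of the units proposition.

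First I would address $\epsilon_\lambda$. Starting from $\cld = -(1-\epsilon_\lambda)\cl$, I substitute to get
\[
    e^{-2}\kappa\zeta_0\zeta_2\cld = -(1-\epsilon_\lambda)\bigl(e^{-2}\kappa\zeta_0\zeta_2\cl\bigr) = -(1-\epsilon_\lambda)\epsilon_\lambda,
\]
using the definition $\epsilon_\lambda = e^{-2}\kappa\zeta_0\zeta_2\cl$. Expanding and invoking the identity $\epsilon_\lambda^2 = 2\epsilon_\lambda$ from the proof of the units proposition gives $-\epsilon_\lambda + \epsilon_\lambda^2 = \epsilon_\lambda$, as required.

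Next I would handle $\epsilon_\omega$ in the same manner. From $\cwd = (1-\epsilon_\lambda)\cw$, I obtain
\[
    e^{-4}\kappa\zeta_0^2\zeta_1\cwd = (1-\epsilon_\lambda)\bigl(e^{-4}\kappa\zeta_0^2\zeta_1\cw\bigr) = (1-\epsilon_\lambda)\epsilon_\omega,
\]
which reduces to $\epsilon_\omega - \epsilon_\lambda\epsilon_\omega = \epsilon_\omega$ after applying the vanishing relation $\epsilon_\lambda\epsilon_\omega = 0$, again established in the proof of the units proposition.

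There is no real obstacle here: the corollary is just a matter of recognizing that multiplying $\cld$ by the same $e^{-2}\kappa\zeta_0\zeta_2$ that defined $\epsilon_\lambda$ (and likewise for $\cwd$) converts the dual formulas into polynomial expressions in $\epsilon_\lambda$ and $\epsilon_\omega$, which then collapse via the two quadratic identities already in hand. The only thing to check is that the gradings match up, which they do because $\cld$ and $\cwd$ lie in the same gradings as $\cl$ and $\cw$ respectively (pullback along the involution $\delta$ preserves grading), so the formulas make sense term by term.
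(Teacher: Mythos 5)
Your proof is correct and is essentially identical to the paper's: both substitute the dual-class formulas from the preceding proposition, recognize the resulting products as $-(1-\epsilon_\lambda)\epsilon_\lambda$ and $(1-\epsilon_\lambda)\epsilon_\omega$, and collapse them via $\epsilon_\lambda^2 = 2\epsilon_\lambda$ and $\epsilon_\lambda\epsilon_\omega = 0$. Nothing further is needed.
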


\begin{proof}
\begin{align*}
    e^{-2}\kappa\zeta_0\zeta_2\cld
    &= -e^{-2}\kappa\zeta_0\zeta_2\cdot (1-\epsilon_\lambda)\cl \\
    &= -\epsilon_\lambda(1-\epsilon_\lambda) \\
    &= \epsilon_\lambda
\end{align*}
using that $\epsilon_\lambda^2 = 2\epsilon_\lambda$, and
\begin{align*}
    e^{-4}\kappa\zeta_0^2\zeta_1\cwd
    &= e^{-4}\kappa\zeta_0^2\zeta_1\cdot (1-\epsilon_\lambda)\cw \\
    &= \epsilon_\omega(1-\epsilon_\lambda) \\
    &= \epsilon_\omega
\end{align*}
using that $\epsilon_\lambda\epsilon_\omega = 0$.
\end{proof}

\section{Characteristic numbers of lines and surfaces}\label{sec:char numbers}

Nonequivariantly, once we know 
the cohomology of $BU(n)$ we may define characteristic classes
for $n$-dimensional (complex) vector bundles.
From those, we can define
characteristic numbers of stably almost complex manifolds.
These are known to characterize the (nonequivariant) bordism classes of such manifolds.

Equivariantly, we make the following definition.

\begin{definition}\label{def:char numbers}
Let $M$ be a complex $\GG$-manifold of (complex) dimension $n$, so that its tangent bundle $\tau_M$
has a given complex structure. Let 
\[
    \tau_M\colon M\to B U(n)
\]
be the classifying map for the tangent bundle,
and let $c\in H_\GG^{\omega(n)+RO(\GG)}(B U(n)_+)$ be a cohomology class. We call
\[
    c[M] := \eval{\tau_M^*(c)}{[M]} \in \HS
\]
a \emph{(tangential) characteristic number} of $M$.
(Here, $\eval{-}{-}$ denotes evaluation of cohomology on homology, as in
\cite[Definition~3.10.18]{CostenobleWanerBook}.)
\end{definition}

A characteristic ``number'' here will be an element of the cohomology of a point rather than an integer.
One particular example is the \emph{Euler characteristic} of a manifold, which is
\[
    c_{\omega(n)}[M] = \eval{\tau_M^*(c_{\omega(n)})}{[M]} = \eval{e(\tau_M)}{[M]} \in \HS^0 = A(\GG).
\]
The nonequivariant proof that characteristic numbers are cobordism invariants generalizes
to show that equivariant characteristic numbers are equivariant cobordism invariants.

As examples, we compute the characteristic numbers for the (complex)
lines $\Xpq pq$ with $p+q = 2$ and the
surfaces $\Xpq pq$ with $p+q = 3$.
We will use the notations from and computations of the cohomologies of these spaces
made in \cite{CHTFiniteProjSpace}.
Note that $\Xp 2 \homeo \Xq 2$, $\Xp 3 \homeo \Xq 3$ and $\Xpq 2{} \homeo \Xpq 12$, so there are really only two
lines and two surfaces to consider.

\subsection{Characteristic numbers for lines}
We begin by identifying the relevant characteristic classes $c$ for complex lines.
As in Definition~\ref{def:char numbers}, they will be elements in $H_\GG^{\omega(1)+RO(\GG)}(B U(1)_+)$.
Write $\omega = \omega(1)$ for the remainder of this subsection.

Rewrite
\[
    \eval{\tau_M^*(c)}{[M]} = \eval{c}{(\tau_M)_*[M]},
\]
and notice that $\grad(\tau_M)_*[M] = \omega$,
so we need only consider classes $c$ that can give non-zero elements of the cohomology of a point
when evaluated on a homology class in grading $\omega$. These will, first of all, be elements
in gradings in the coset $\omega+RO(\GG)$.

In \cite{Co:InfinitePublished}, the first author gave a basis for the cohomology of $B U(1)$
which, in gradings $\omega+RO(\GG) = \Omega_1 + RO(\GG)$, begins
\[
    \{ \zeta_1, \cw, \zeta_0\cw^2, \cw^2\cxw, \zeta_0\cw^3\cxw, \dots \}.
\]
These are arranged in a step pattern as in the following diagram.
\begin{center}
\begin{tikzpicture}[scale=0.4] 
    \draw[step=1cm,gray,very thin] (-0.9,-0.9) grid (3.9,3.9);
    \draw[thick] (-1,0) -- (4,0);
    \draw[thick] (0,-1) -- (0,4);

    \draw[fill] (0,0) circle(0.2cm);
    \draw[fill] (1,0) circle(0.2cm);
    \draw[fill] (1,1) circle(0.2cm);
    \draw[fill] (2,1) circle(0.2cm);
    \draw[fill] (2,2) circle(0.2cm);

    \node[fill=white,scale=0.8,rotate=45] at (3,2) {\dots};
    \node[fill=white,scale=0.8,rotate=45] at (3,3) {\dots};

    \fill[gray,opacity=0.2] (1,0) -- (1,3.9) -- (-0.9,3.9) -- (-0.9,1.9) -- (1,0);

\end{tikzpicture}
\end{center}
The only basis elements that can evaluate to something nonzero on an element in grading $\omega$
are those that fall into the shaded positive wedge above that grading
(there are no basis elements in the negative wedge below it).
The two elements in that wedge are $\cw$ and $\zeta_0\cw^2$, so those are the only two
characteristic numbers we need to compute.
All other characteristic numbers of a line will be linear combinations of these two.

\subsection{Characteristic numbers of $\Xp 2$}
This line has trivial $\GG$-action. Recall that, grading on $RO(\Pi BU(1))$, its cohomology is
\[
    H_\GG^{RO(\Pi BU(1))}(\Xp 2_+) \iso \HS[\cwd,\zeta_1^{\pm 1}]/\rels{\cwd[2]}.
\]
Because the action of $\GG$ is trivial, its tangent bundle $\tau_2$ has (real) dimension 2,
which is the grading in which its fundamental class $[\Xp 2]$ lives.
We will therefore be evaluating cohomology elements in the $RO(\GG)$ grading on $[\Xp 2]$.
A basis in the $RO(\GG)$ grading is given by the two elements $\{1, \zeta_1^{-1}\cwd\}$
and we now evaluate each on the fundamental class.
(We use the dual classes here because, as Milnor and Stasheff put it,
they are the generators ``compatible with the preferred orientation'' of the projective spaces
\cite[\S14.10]{MilnorStasheff}.
They will arise below as the pullbacks of the usual characteristic classes along the
classifying map of the tangent bundle.)

We have $\eval{1}{[\Xp 2]} = 0$ for dimensional reasons---it lives in $\HS^{-2} = 0$.

We must have $\eval{\zeta_1^{-1}\cwd}{[\Xp 2]} = a + b\kappa \in A(\GG)$ for some
integers $a$ and $b$.
Applying $\rho$,
\[
    \rho(a+b\kappa) = a
\]
but
\[
    \rho\eval{\zeta_1^{-1}\cwd}{[\Xp 2]} = \eval{\cd_1}{[\Xp 2]} = 1
\]
when we use the dual class $\cd_1$.
Therefore, $a = 1$. Taking fixed points, we have
\[
    (a+b\kappa)^\GG = a + 2b = 1 + 2b
\]
but
\[
    \eval{\zeta_1^{-1}\cwd}{[\Xp 2]}^\GG = \eval{\cd_1}{[\Xp 2]} = 1
\]
again, hence $b = 0$. Therefore,
\[
    \eval{\zeta_1^{-1}\cwd}{[\Xp 2]} = 1.
\]

Let $\tau_2\colon \Xp 2\to B U(1)$ be the classifying map of the tangent bundle.
In order to calculate characteristic numbers, we need to calculate $\tau_2^*(\cw)$
and $\tau_2^*(\zeta_0^2\cw^2)$. We first notice that the induced map on representation rings has
\[
    \tau_2^*(\Omega_0) = 2\sigma - 2 \quad\text{and}\quad \tau_2^*(\Omega_1) = 0,
\]
which tells us that
\[
    \tau_2^*(\zeta_0) = \xi \quad\text{and}\quad \tau_2^*(\zeta_1) = 1.
\]
To calculate $\tau_2^*(\cw)$, we know this element lives in grading 2, hence
\[
    \tau_2^*(\cw) = (a+b\kappa)\zeta_1^{-1}\cwd
\]
for some $a,b \in \Z$. The restriction $\rho\tau_2^*(\cw)$ is the nonequivariant Euler class of the tangent
bundle, which is $2\cd_1$, hence $a = 2$.
Taking fixed sets must give us the same Euler class, so $a + 2b = 2$ as well, which tells us that $b = 0$.
Hence, $\tau_2^*(\cw) = 2\zeta_1^{-1}\cwd$.

From this, we can compute
\[
    \tau_2^*(\zeta_0\cw^2) = \xi\cdot 4\zeta_1^{-2}\cwd[2] = 0.
\]

Finally, we can compute the two characteristic numbers:
\begin{equation}\label{eqn:char numbers 20}
\begin{aligned}
    \cw{[\Xp 2]} &= \eval{2\zeta_1^{-1}\cwd}{[\Xp 2]} = 2 \\
    \zeta_0\cw^2[\Xp 2] &= \eval{0}{[\Xp 2]} = 0.
\end{aligned}
\end{equation}

\subsection{Characteristic numbers of $\Xpq 1{}$}
This line has nontrivial action, and its dimension is $2\sigma$.
That means that we are again interested in cohomology elements in the $RO(\GG)$ grading,
where we have the basis $\{1, \zeta_0\cwd\}$.
Again, we start by evaluating each of these on the fundamental class $[\Xpq 1{}]$.
\[
    \eval{1}{[\Xpq 1{}]} = a e^{-2}\kappa \in \HS^{-2\sigma}
\]
for some $a\in\Z$. Applying $\rho$ will give 0 on both sides, so we look instead at
fixed points:
\[
    (ae^{-2}\kappa)^\GG = 2a,
\]
and
\[
    \eval{1}{[\Xpq 1{}]}^\GG = \eval{1}{[\Xp{}]} + \eval{1}{[\Xq{}]} = 1 + 1 = 2.
\]
Therefore, $a = 1$ and $\eval{1}{[\Xpq 1{}]} = e^{-2}\kappa$.

\[
    \eval{\zeta_0\cwd}{[\Xpq 1{}]} = a + b\kappa \in A(\GG)
\]
for some $a,b\in Z$.
Nonequivariantly, we have
\[
    \rho\eval{\zeta_0\cwd}{[\Xpq 1{}]} = \eval{\cd_1}{[\PP^1]} = 1,
\]
so we must have $a = 1$. Taking fixed points,
\[
    \eval{\zeta_0\cwd}{[\Xpq 1{}]}^\GG = \eval{0}{[\Xp{}]} + \eval{1}{[\Xq{}]} = 0 + 1 = 1,
\]
hence $b = 0$ and $\eval{\zeta_0\cwd}{[\Xpq 1{}]} = 1$.

Let $\tau_{1,1}\colon\Xpq 1{}\to B U(1)$ be the classifying map for the tangent bundle.
We have
\[
    \tau_{1,1}^*(\Omega_0) = 0 \quad\text{and}\quad \tau_{1,1}^*(\Omega_1) = 2\sigma - 2,
\]
so
\[
    \tau_{1,1}(\zeta_0) = 1 \quad\text{and}\quad \tau_{1,1}^*(\zeta_1) = \xi.
\]
We must have
\[
    \tau_{1,1}^*(\cw) = \alpha \zeta_0\cwd + be^2
\]
for some $\alpha\in A(\GG)$ and $b\in\Z$. Applying $\rho$, we get
\[
    \rho(\alpha \zeta_0\cwd + be^2) = \rho(\alpha)\cd_1
\]
and
\[
    \rho\tau_{1,1}^*(\cw) = c_1(\tau_{1,1}) = 2\cd_1,
\]
the Euler class. Hence, $\rho(\alpha) = 2$.
Taking fixed points, we get
\[
    (\alpha \zeta_0\cwd + be^2)^\GG = \alpha^\GG(0,1) + b(1,1)
\]
and
\[
    \tau_{1,1}^*(\cw)^\GG = (1,1),
\]
hence $\alpha^\GG = 0$ and $b = 1$. This tells us that $\alpha = g$, hence
\[
    \tau_{1,1}^*(\cw) = g \zeta_0\cwd + e^2.
\]
We can then compute
\[
    \tau_{1,1}^*(\zeta_0\cw^2) = (g \zeta_0\cwd + e^2)^2 = e^4
\]
using the relations in the cohomology of $\Xpq 1{}$.

We can now put this all together to compute the two characteristic numbers:
\begin{equation}\label{eqn:char numbers 11}
\begin{aligned}
    \cw{[\Xpq 1{}]} &= \eval{g \zeta_0\cwd + e^2}{[\Xpq 1{}]} = g + \kappa = 2 \\
    \zeta_0\cw^2[\Xpq 1{}] &= \eval{e^4}{[\Xpq 1{}]} = 2e^2.
\end{aligned}
\end{equation}

\begin{corollary}
The lines $\Xp 2$ and $\Xpq 1{}$ are not equivariantly cobordant.
\end{corollary}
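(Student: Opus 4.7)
The plan is to use the characteristic numbers just computed together with the cobordism invariance of these numbers, as remarked immediately after Definition~\ref{def:char numbers}. Since both manifolds are complex $\GG$-lines, any equivariant cobordism between them would respect the complex structures and so preserve all tangential characteristic numbers valued in $\HS$. Thus it suffices to exhibit a single characteristic class $c \in H_\GG^{\omega+RO(\GG)}(BU(1)_+)$ whose value on the two fundamental classes disagrees.

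The first class we computed, $\cw$, gives the same value $2$ on both manifolds (equations~(\ref{eqn:char numbers 20}) and~(\ref{eqn:char numbers 11})), so it does not separate them. This matches expectations, since $\cw$ restricts nonequivariantly to $c_1$, and both $\Xp 2$ and $\Xpq 1{}$ are nonequivariantly diffeomorphic to $S^2$ with the same nonequivariant Euler characteristic.

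However, the second characteristic number distinguishes them: we have
\[
    \zeta_0\cw^2[\Xp 2] = 0 \qquad\text{while}\qquad \zeta_0\cw^2[\Xpq 1{}] = 2e^2,
\]
and $2e^2 \neq 0$ in $\HS$ (this element is detected on fixed points, as $e$ is invertible in $H_\phi^{RO(\GG)}(S^0;\Z)$). Consequently the two lines have different characteristic numbers in $\HS$, and by cobordism invariance they cannot lie in the same equivariant cobordism class. The only real content beyond citing the two equations is the nonvanishing of $2e^2$, which is immediate from the standard description of $\HS$, so there is no substantial obstacle here---the work has already been done in the preceding two subsections.
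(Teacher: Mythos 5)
Your proof is correct and is essentially identical to the paper's: both simply compare the characteristic numbers in (\ref{eqn:char numbers 20}) and (\ref{eqn:char numbers 11}) and invoke cobordism invariance. Your added observation that $2e^2\neq 0$ in $\HS$ (the group in degree $2\sigma$ is a copy of $\Z$ generated by $e^2$) is the only detail the paper leaves implicit, and it is right.
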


\begin{proof}
Their characteristic numbers, computed in (\ref{eqn:char numbers 20}) and (\ref{eqn:char numbers 11}),
are different.
\end{proof}

This result is not suprising if we look at fixed points, but it's interesting that
the characteristic numbers detect the difference. Also note that the first number
is the same for both lines (and is just the Euler characteristic), 
so we need the second as well (which reduces to 0 nonequivariantly), contrary to the nonequivariant
case where the number $\cd_1[M]$ suffices to determine the cobordism class of a 
complex one-dimensional manifold $M$.

\subsection{Characteristic numbers for surfaces}
As we did for lines,
we begin by identifying the relevant characteristic classes for a surface. As in Definition~\ref{def:char numbers},
they will be elements $c\in H_\GG^{\omega(2)+RO(\GG)}(B U(2)_+)$.
In the remainder of this section we will write $\omega$ for $\omega(2)$
or for the tautological line bundle over a finite projective space, and the context should
make clear which we mean.

Once again, we need only consider classes $c$ that can give non-zero elements of the cohomology of a point
when evaluated on a homology class in grading $\omega$.
In \S\ref{sec:bases}, we listed the first few elements of the basis in gradings 
$\omega+RO(\GG) = \Omega_1 + 2\Omega_2 + RO(\GG)$
given by Theorem~\ref{thm:main}.
The ones we need are those in the wedge above position $\omega = \Omega_1+2\Omega_2 + 4$
shown shaded below, corresponding to the positive wedge in the cohomology of a point.
(There are no basis elements in the negative wedge below grading $\omega$.)
\begin{center}
\begin{tikzpicture}[scale=0.4] 
    \draw[step=1cm,gray,very thin] (-0.9,-0.9) grid (4.9,4.9);
    \draw[thick] (-1,0) -- (5,0);
    \draw[thick] (0,-1) -- (0,5);

    \node[fill=white,scale=0.7] at (0,0) {1}; \draw (0,0) circle(0.38cm);

    \node[fill=white,scale=0.7] at (1,0) {1}; \draw (1,0) circle(0.38cm);
    \node[fill=white,scale=0.7] at (1,1) {1}; \draw (1,1) circle(0.38cm);

    \node[fill=white,scale=0.7] at (2,0) {1}; \draw (2,0) circle(0.38cm);
    \node[fill=white,scale=0.7] at (2,1) {2}; \draw (2,1) circle(0.38cm);
    \node[fill=white,scale=0.7] at (2,2) {2}; \draw (2,2) circle(0.38cm);

    \node[fill=white,scale=0.7] at (3,1) {1}; \draw (3,1) circle(0.38cm);
    \node[fill=white,scale=0.7] at (3,2) {3}; \draw (3,2) circle(0.38cm);
    \node[fill=white,scale=0.7] at (3,3) {2}; \draw (3,3) circle(0.38cm);

    \node[fill=white,scale=0.8,rotate=45] at (4,2) {\dots};
    \node[fill=white,scale=0.8,rotate=45] at (4,3) {\dots};
    \node[fill=white,scale=0.8,rotate=45] at (4,4) {\dots};

    \fill[gray,opacity=0.2] (2.3,-0.2) -- (2.3,4.9) -- (-0.9,4.9) -- (-0.9,2.5) -- (1.8,-0.2) -- (2.3,-0.2);

\end{tikzpicture}
\end{center}
Thus, we are interested in the following six basic elements, which live in the listed gradings:
\begin{equation}\label{eqn:char classes}
\begin{aligned}
    & c &\qquad& \grad c \\
    &\zeta_0\zeta_2^3\cl^2 && \omega - 2 + 2\sigma \\
    &\cw && \omega \\
    &\zeta_2^2\cl^2\cxl && \omega + 2\sigma \\
    &\zeta_0\zeta_2\cl\cw && \omega + 2\sigma \\
    &\zeta_0^2\zeta_1\cw^2 && \omega + 4\sigma \\
    &\zeta_0\zeta_2^3\cl^3\cxl && \omega + 4\sigma
\end{aligned}
\end{equation}
It suffices, for any particular surface $M$, to calculate the characteristic numbers
determined by these classes, as all nonzero characteristic numbers will be linear combinations of these six.

\subsection{Characteristic numbers of $\Xp 3$}

This surface has trivial $\GG$-action.
Recall that, if we grade on $RO(\Pi B) = RO(\Pi BU(1))$, we get
\[
    H_\GG^{RO(\Pi B)}(\Xp 3_+) \iso \HS[\cwd,\zeta_1^{\pm 1}] / \rels{\cwd[3]}.
\]
As the action is trivial, the tangent bundle $\tau_3$ has (real) dimension 4,
and this is the grading in which the fundamental class $[\Xp 3]$ lives.
We will therefore want to evaluate cohomology elements in the $RO(\GG)$ grading on $[\Xp 3]$.
A basis in this grading is given by
\[
    \{ 1, \zeta_1^{-1}\cwd, \zeta_1^{-2}\cwd[2] \}.
\]
The calculations are similar to those we did for lines; we suppress the details
and just give the results:
\begin{align*}
    \eval{1}{[\Xp 3]} &= 0 \\
    \eval{\zeta_1^{-1}\cwd}{[\Xp 3]} &= 0 \\
    \eval{\zeta_1^{-2}\cwd[2]}{[\Xp 3]} &= 1.
\end{align*}
If $\tau_3\colon \Xp 3\to B U(2)$ is the classifying map of the tangent bundle,
we get the following computations.
\begin{align*}
    \tau_3^*(\zeta_0) &= \xi \\
    \tau_3^*(\zeta_1) &= \tau_3^*(\zeta_2) = 1 \\
    \tau_3^*(\cw) &= 3\zeta_1^{-2}\cwd[2] \\
    \tau_3^*(\cl) &= 3\zeta_1^{-1}\cwd \\
    \tau_3^*(\cxl) &= e^2 + 3\xi\zeta_1^{-1}\cwd.
\end{align*}
which give the pullbacks of the elements from (\ref{eqn:char classes}):
\begin{align*}
    \tau_3^*(\zeta_0\zeta_2^3\cl^2) &= 9\xi\zeta_1^{-2}\cwd[2] \\
    \tau_3^*(\cw) &= 3\zeta_1^{-2}\cwd[2] \\
    \tau_3^*(\zeta_2^2\cl^2\cxl) &= 9e^2\zeta_1^{-2}\cwd[2] \\
    \tau_3^*(\zeta_0\zeta_2\cl\cw) &= 0 \\
    \tau_3^*(\zeta_0^2\zeta_1\cw^2) &= 0 \\
    \tau_3^*(\zeta_0\zeta_2^3\cl^3\cxl) &= 0
\end{align*}
Putting all these calculations together, we get the following characteristic numbers.
\begin{equation}\label{eqn:char numbers 30}
\begin{aligned}
    \zeta_0\zeta_2^3\cl^2[\Xp 3] &= 9\xi \\
    \cw[\Xp 3] &= 3 \\
    \zeta_2^2\cl^2\cxl[\Xp 3] &= 9e^2 \\
    \zeta_0\zeta_2\cl\cw[\Xp 3] &= 0 \\
    \zeta_0^2\zeta_1\cw^2[\Xp 3] &= 0 \\
    \zeta_0\zeta_2^3\cl^3\cxl[\Xp 3] &= 0
\end{aligned}
\end{equation}

\subsection{Characteristic numbers of $\Xpq 2{}$}

The tangent bundle of $\Xpq 2{}$ is
\[
    \tau_{2,1} \iso \Hom(\omega(1), \Cpq 2{} / \omega(1)) \iso (2\omega(1)\dual \dirsum \chi\omega(1)\dual) / \C,
\]
which has dimension
\[
    \dim\tau_{2,1} = \grad[\Xpq 2{}] = 2\omega(1) + \chi\omega(1) - 2 = \omega(1) + 2\sigma.
\]
We will therefore want to evaluate elements in the cohomology of $\Xpq 2{}$ in gradings 
$\omega(1) + RO(\GG) = \Omega_1+RO(\GG)$ on $[\Xpq 2{}]$.
The cohomology in those gradings has basis
\[
    \{ \zeta_1, \cwd, \zeta_0\cwd[2] \}.
\]

Once again suppressing the details of the computations, we get the following results.
\begin{align*}
    \eval{\zeta_1}{[\Xpq 2{}]} &= 0 \\
    \eval{\cwd}{[\Xpq 2{}]} &= e^{-2}\kappa \\
    \eval{\zeta_0\cwd[2]}{[\Xpq 2{}]} &= 1.
\end{align*}
The pullbacks of the elements from (\ref{eqn:char classes}) are
\begin{align*}
    \tau_{2,1}^*(\zeta_0\zeta_2^3\cl^2) &= 9\xi\zeta_0\cwd[2] + e^4\zeta_1 \\
    \tau_{2,1}^*(\cw) &= (3-2\kappa)\zeta_0\cwd[2] + 2e^2\cwd \\
    \tau_{2,1}^*(\zeta_2^2\cl^2\cxl) &= -3e^2\zeta_0\cwd[2] + 3e^4\cwd \\
    \tau_{2,1}^*(\zeta_0\zeta_2\cl\cw) &= -2e^2\zeta_0\cwd[2] + 2e^4\cwd \\
    \tau_{2,1}^*(\zeta_0^2\zeta_1\cw^2) &= e^4\zeta_0\cwd[2] \\
    \tau_{2,1}^*(\zeta_0\zeta_2^3\cl^3\cxl) &= -3e^4\zeta_0\cwd[2] + 3e^6\cwd
\end{align*}
These calculations give the following characteristic numbers.
\begin{equation}\label{eqn:char numbers 21}
\begin{aligned}
    \zeta_0\zeta_2^3\cl^2[\Xpq 2{}] &= 9\xi \\
    \cw[\Xpq 2{}] &= 3 \\
    \zeta_2^2\cl^2\cxl[\Xpq 2{}] &= 3e^2 \\
    \zeta_0\zeta_2\cl\cw[\Xpq 2{}] &= 2e^2 \\
    \zeta_0^2\zeta_1\cw^2[\Xpq 2{}] &= e^4 \\
    \zeta_0\zeta_2^3\cl^3\cxl[\Xpq 2{}] &= 3e^4
\end{aligned}
\end{equation}
Comparing the characteristic numbers in (\ref{eqn:char numbers 30}) and (\ref{eqn:char numbers 21}),
we get the following.

\begin{corollary}
The surfaces $\Xp 3$ and $\Xpq 2{}$ are not equivariantly cobordant.
\qed
\end{corollary}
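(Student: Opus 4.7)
The plan is to invoke cobordism invariance of equivariant characteristic numbers and then point to any single entry where the two columns of numbers disagree. First I would recall that, as noted after Definition~\ref{def:char numbers}, the nonequivariant argument (that the boundary of a compact manifold with boundary has vanishing characteristic numbers, via the restriction of its stable tangential structure) goes through verbatim in the equivariant setting, so $c[M]\in\HS$ depends only on the equivariant cobordism class of $M$. Consequently, to separate $\Xp 3$ and $\Xpq 2{}$ it is enough to exhibit one class $c\in H_\GG^{\omega(2)+RO(\GG)}(BU(2)_+)$ for which $c[\Xp 3]\neq c[\Xpq 2{}]$ in $\HS$.

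Next I would compare the tables (\ref{eqn:char numbers 30}) and (\ref{eqn:char numbers 21}) entry by entry. The classes $\zeta_0\zeta_2^3\cl^2$ and $\cw$ give the same values $9\xi$ and $3$ on both surfaces and so do not help (the $\cw$ number is essentially the Euler characteristic, and both surfaces indeed have Euler characteristic $3$). However, the remaining four rows all differ: $\zeta_2^2\cl^2\cxl$ gives $9e^2$ versus $3e^2$, while $\zeta_0\zeta_2\cl\cw$, $\zeta_0^2\zeta_1\cw^2$, and $\zeta_0\zeta_2^3\cl^3\cxl$ all vanish on $\Xp 3$ but are nonzero ($2e^2$, $e^4$, $3e^4$ respectively) on $\Xpq 2{}$. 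Since $e$ is not a zero-divisor in $\HS$, these are genuine inequalities in $\HS$, not artifacts of torsion.

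Any one of these suffices; the cleanest to cite is $\zeta_0^2\zeta_1\cw^2$, which gives $0$ on $\Xp 3$ and $e^4\neq 0$ on $\Xpq 2{}$. Combining this with cobordism invariance yields the corollary immediately. The only subtle point, and the thing I would be careful to justify in the written proof, is that the $c[M]$ computed via $\eval{\tau_M^*(c)}{[M]}$ is actually a cobordism invariant in the expanded-grading theory: one needs that the fundamental class of a compact $\GG$-manifold with boundary restricts properly under the classifying map of the stable tangent bundle, which follows from the general framework in \cite{CostenobleWanerBook} for ordinary cohomology graded on $RO(\Pi(-))$ and the naturality of the evaluation pairing. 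Given that, the corollary is a one-line consequence of the tables already computed.
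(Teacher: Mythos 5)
Your proposal is correct and takes essentially the same approach as the paper, which presents the corollary as an immediate consequence of comparing the tables \eqref{eqn:char numbers 30} and \eqref{eqn:char numbers 21}, relying on the cobordism invariance of equivariant characteristic numbers noted after Definition~\ref{def:char numbers}. One small slip: $e$ \emph{is} a zero-divisor in $\HS$ (for instance $ge=0$), but the inequalities you need hold anyway because $\HS^{n\sigma}\iso\Z$ is generated by $e^n$ for $n\geq 1$, so $9e^2\neq 3e^2$ and $e^4\neq 0$.
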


Again, this result is not suprising if we look at fixed sets, but the point is that
the characteristic numbers do detect the difference. 
What seems to be happening is that the first two characteristic numbers,
which are the same for both surfaces, are simply giving us the
nonequivariant calculations $c_1^2[\Xp 3] = 9$ and $c_2[\Xp 3] = 3$, while the others
(which all reduce to 0 nonequivariantly) depend on the fixed-point structures, so
distinguish between the two.

\appendix

\section{Equivariant ordinary cohomology}\label{app:ordinarycohomology}

In this paper
we use $\GG$-equivariant ordinary cohomology with the extended grading developed in \cite{CostenobleWanerBook}.
This is an extension of Bredon's ordinary cohomology to be graded on representations of the fundamental groupoids
of $\GG$-spaces.
We review here some of the notation and computations we use.
More detailed summaries of this theory can be found in
\cite{Beaudry:Guide},
\cite{Co:InfinitePublished}, and \cite{CHTFiniteProjSpace}.

For an ex-$\GG$-space $Y$ over $X$, we write $H_\GG^{RO(\Pi X)}(Y;\Mackey T)$ for the
ordinary cohomology of $Y$ with coefficients in a Mackey functor $\Mackey T$, graded
on $RO(\Pi X)$, the representation ring of the fundamental groupoid of $X$.
Through most of this paper we use the Burnside ring Mackey functor $\Mackey A$ as the coefficients,
and write simply $H_\GG^{RO(\Pi X)}(Y)$.
If $X$ is simply connected and
\[
    X^\GG = X^0\disjunion X^1 \disjunion \cdots \disjunion X^n
\]
with each $X^i$ simply connected (which is true for our main example, $BU(2)$), then
\[
    RO(\Pi X) \iso \Z\{1,\sigma,\Omega_0,\dots,\Omega_n\}/\rels{\Omega_0 + \cdots + \Omega_n = 2\sigma - 2}
\]
where $1 = [\R]$ and $\sigma = [\R^\sigma]$ are the generators of $RO(\GG)$,
the proof being essentially the same as \cite[Proposition~6.1]{Co:InfinitePublished} and the discussion following.
In this case, elements of $RO(\Pi X)$ are determined by their restrictions to the components
of $X^\GG$, which are virtual representations all of the same nonequivariant dimension and whose
fixed representations all have the same parity. The particular representation $\Omega_i$
is characterized by the fact that its restriction to $X^j$ is 0 if $i\neq j$ and $2\sigma - 2$ if $i = j$.

In \cite{CostenobleWanerBook} and \cite{CHTFiniteProjSpace} we considered cohomology to be Mackey functor--valued,
which is useful for many computations, and wrote $\Mackey H_\GG^{RO(\Pi X)}(Y)$ for the resulting theory.
In this paper we concentrate on the values at level $\GG/\GG$, and write
$H_\GG^{RO(\Pi X)}(Y) = \Mackey H_\GG^{RO(\Pi X)}(Y)(\GG/\GG)$. However, we 
use extensively the structure maps of the Mackey functor structure, namely the
restriction functor $\rho$ from equivariant cohomology to nonequivariant cohomology, and the transfer
map $\tau$ going in the other direction.
We will also treat nonequivariant cohomology as graded on $RO(\Pi X)$ via the forgetful
map $RO(\Pi_\GG X)\to RO(\Pi_e X)$ from the representation ring of the equivariant fundamental groupoid
of $X$ to the representation ring of its nonequivariant fundamental groupoid.
Another way of saying that is that we view nonequivariant cohomology as
\[
    H^{RO(\Pi X)}(X;\Z) = \Mackey H^{RO(\Pi X)}_\GG(X;\Mackey A)(\GG/e).
\]
(See \S\ref{sec:change of grading} for more details about regrading graded rings like $H^\Z(X;\Z)$.)

One of the most important facts about this theory is that we have Thom isomorphisms for every
vector bundle over $X$, hence Euler classes for every vector bundle.
(See \cite[Theorem~3.11.3]{CostenobleWanerBook}.)
If $\omega$ is a vector bundle over $X$, the representations given by each fiber
determine a representation of $\Pi X$, that is, an element of $RO(\Pi X)$,
which we call the equivariant \emph{dimension} of $\omega$, and
this is the grading in which the Euler class of $\omega$ lives.
This notion of dimension was introduced in \cite{CMW:orientation}
and the idea of grading on $RO(\Pi X)$ is to encompass the dimensions
of bundles, providing natural locations in which Thom and Euler classes can live.

For all $X$ and $Y$, $H_\GG^{RO(\Pi X)}(Y)$ is a graded module over the $RO(\GG)$-graded cohomology of a point,
\[
    \HS = \HS^{RO(\GG)} = H_\GG^{RO(\GG)}(S^0).
\]
The cohomology of a point was calculated by Stong in an unpublished
manuscript and first published by Lewis in \cite{LewisCP}. We can picture the calculation as 
in Figure~\ref{fig:cohompt},
in which a group in grading $a+b\sigma$ is plotted at the point $(a,b)$, and the spacing of the grid lines
is 2 (which is more convenient for other graphs in this paper).
The square box at the origin is a copy of $A(\GG)$, the Burnside ring of $\GG$,
closed circles are copies of $\Z$, and open circles are copies of $\Z/2$.
The elements in the second quadrant are often referred to as the \emph{positive wedge},
while those in the fourth quadrant are the \emph{negative wedge.}
\begin{figure}
\begin{tikzpicture}[x=4mm, y=4mm]
	\draw[step=2, gray, very thin] (-7.8, -7.8) grid (7.8, 7.8);
	\draw[thick] (-8, 0) -- (8, 0);
	\draw[thick] (0, -8) -- (0, 8);
    \node[right] at (8,0) {$a$};
    \node[above] at (0,8) {$b\sigma$};

    \fill (-0.3, -0.3) rectangle (0.3, 0.3);
    \fill (0, -7) circle(0.2);
    \fill (0, -6) circle(0.2);
    \fill (0, -5) circle(0.2);
    \fill (0, -4) circle(0.2);
    \fill (0, -3) circle(0.2);
    \fill (0, -2) circle(0.2);
    \fill (0, -1) circle(0.2);
    \fill (0, 1) circle(0.2);
    \fill (0, 2) circle(0.2);
    \fill (0, 3) circle(0.2);
    \fill (0, 4) circle(0.2);
    \fill (0, 5) circle(0.2);
    \fill (0, 6) circle(0.2);
    \fill (0, 7) circle(0.2);

    \fill (-2, 2) circle(0.2);
    \fill (-4, 4) circle(0.2);
    \fill (-6, 6) circle(0.2);

    \draw[fill=white] (-2, 3) circle(0.2);
    \draw[fill=white] (-2, 4) circle(0.2);
    \draw[fill=white] (-2, 5) circle(0.2);
    \draw[fill=white] (-2, 6) circle(0.2);
    \draw[fill=white] (-2, 7) circle(0.2);
    \draw[fill=white] (-4, 5) circle(0.2);
    \draw[fill=white] (-4, 6) circle(0.2);
    \draw[fill=white] (-4, 7) circle(0.2);
    \draw[fill=white] (-6, 7) circle(0.2);

    \fill (2, -2) circle (0.2);
    \fill (4, -4) circle (0.2);
    \fill (6, -6) circle (0.2);
    \draw[fill=white] (3, -3) circle(0.2);
    \draw[fill=white] (5, -5) circle(0.2);
    \draw[fill=white] (7, -7) circle(0.2);
    
    \draw[fill=white] (3, -4) circle(0.2);
    \draw[fill=white] (3, -5) circle(0.2);
    \draw[fill=white] (3, -6) circle(0.2);
    \draw[fill=white] (3, -7) circle(0.2);
    \draw[fill=white] (5, -6) circle(0.2);
    \draw[fill=white] (5, -7) circle(0.2);

    \node[right] at (0,1) {$e$};
    \node[below left] at (-2,2) {$\xi$};
    \node[above right] at (2,-2) {$\tau(\iota^{-2})$};
    \node[left] at (0,-1) {$e^{-1}\kappa$};

    \draw[fill] (11,1) rectangle (11.6,1.6); \node[right] at (11.3,1.3) {${}= A(\GG)$};
    \draw[fill] (11.3,0) circle(0.2) node[right] {${}=\Z$};
    \draw[fill=white] (11.3,-1.3) circle(0.2) node[right] {${}=\Z/2$};

\end{tikzpicture}
\caption{$\HS = H_\GG^{RO(\GG)}(S^0)$}\label{fig:cohompt}
\end{figure}

Recall that $A(\GG)$ is the Grothendieck group of finite $\GG$-sets, with multiplication given by products of sets.
Additively, it is free abelian on the classes of the orbits of $\GG$, for which we write
$1 = [\GG/\GG]$ and $g = [\GG/e]$. The multiplication is given by $g^2 = 2g$.
We also write $\kappa = 2 - g$. Other important elements in $\HS$ are shown in the figure:
The group in degree $\sigma$ is generated by an element $e$,
which can be thought of as the Euler class of $\R^\sigma\to *$,
while the group in degree $-2 + 2\sigma$ is generated by an element $\xi$.
The groups in the second quadrant are generated by the products $e^m\xi^n$, with $2e\xi = 0$.
We have $g\xi = 2\xi$ and $ge = 0$ (so $\kappa e = 2e$).
The groups in gradings $-m\sigma$, $m\geq 1$, are generated by elements $e^{-m}\kappa$, so named
because $e^m\cdot e^{-m}\kappa = \kappa$. We also have $ge^{-m}\kappa = 0$.

To explain $\tau(\iota^{-2})$, we think for moment about the nonequivariant cohomology
of a point. If we grade it on $RO(\GG)$, we get
$H^{RO(\GG)}(S^0;\Z) \iso \Z[\iota^{\pm 1}]$, where $\deg \iota = -1 + \sigma$.
(Nonequivariantly, we cannot tell the difference between $\R$ and $\R^\sigma$.)
We have $\rho(\xi) = \iota^2$ and $\tau(\iota^2) = g\xi = 2\xi$.
Note also that $\tau(1) = g$.
In the fourth quadrant we have that the group in grading $n(1-\sigma)$, $n\geq 2$, is
generated by $\tau(\iota^{-n})$.
The remaining groups in the fourth quadrant will not concern us here.
For more details, see \cite{Co:InfinitePublished} or \cite{CHTFiniteProjSpace}.

\section{Resolution of ambiguities}\label{app:resolution}

We give here the verification that we can resolve all ambiguities in the reduction system
used to prove Proposition~\ref{prop:freeness}. 
That is, we think of each reduction $W\mapsto f$ as a way of rewriting a monomial
$AW$ as $Af$, which we extend to a way of rewriting polynomials.
To resolve ambiguities means that, if a monomial can be written both as $AW_1$ and $BW_2$,
and we reduce to $Af_1$ and $Bf_2$, respectively, then we can apply a sequence of further reductions to each
that lead to the same polynomial at the end.

So we examine each pair of reductions $(W_1,f_1)$ and $(W_2,f_2)$.
It suffices to begin with the least common multiple of $W_1$ and $W_2$, apply the two reductions
to this monomial, and  show that a series of further reductions can be
applied to bring the polynomials to the same place.
One simplification we can make: If $W_1$ and $W_2$ have greatest common divisor 1,
then the verification is trivial: Applying one reduction and then the other in either order leads
to the same polynomial. Thus, we can take those pairings as verified. We now list all the pairings
together with the resolutions of their ambiguities.
The numbering of the reductions is as in the proof of Proposition~\ref{prop:freeness}.
\begin{itemize}

\item \ref{red:1} and \ref{red:2}: The least common multiple is $\zeta_0\zeta_1\zeta_2 \cxl $.
For the remaining resolutions we will simply start with the greatest common divisor without further comment.
\begin{align*}
	\zeta_0\zeta_1\zeta_2 \cxl 
		&\xmapsto{\text{\ref{red:1}}} \xi \cxl  \\
	\zeta_0\zeta_1\zeta_2 \cxl 
		&\xmapsto{\text{\ref{red:2}}} (1-\kappa)\zeta_0^2\zeta_2^2 \cl + e^2\zeta_0\zeta_2 \\
		&\xmapsto{\text{\ref{red:3}}} \xi \cxl 
\end{align*}

\item \ref{red:1} and \ref{red:3}:
\begin{align*}
	\zeta_0^2\zeta_1\zeta_2^2 \cl
		&\xmapsto{\text{\ref{red:1}}} \xi\zeta_0\zeta_2 \cl \\
	\zeta_0^2\zeta_1\zeta_2^2 \cl
		&\xmapsto{\text{\ref{red:3}}} \xi\zeta_1 \cxl  + e^2\zeta_0\zeta_1\zeta_2 \\
		&\xmapsto{\text{\ref{red:1}}} \xi\zeta_1 \cxl  + e^2\xi \\
		&\xmapsto{\text{\ref{red:2}}} \xi\zeta_0\zeta_2 \cl
\end{align*}

\item \ref{red:1} and \ref{red:4}:
\begin{align*}
	\zeta_0\zeta_1\zeta_2^2 \cxw
		&\xmapsto{\text{\ref{red:1}}} \xi\zeta_2 \cxw \\
	\zeta_0\zeta_1\zeta_2^2 \cxw
		&\xmapsto{\text{\ref{red:4}}} (1-\kappa)\zeta_0^3\zeta_1 \cw  + e^2\zeta_0\zeta_1 \cxl  \\
		&\xmapsto{\text{\ref{red:2}}} (1-\kappa)\zeta_0^3\zeta_1 \cw 
			- e^2\zeta_0^2\zeta_2 \cl + e^4\zeta_0 \\
		&\xmapsto{\text{\ref{red:5}}} \xi\zeta_2 \cxl 
\end{align*}

\item \ref{red:1} and \ref{red:5}:
\begin{align*}
	\zeta_0^3\zeta_1\zeta_2 \cw 
		&\xmapsto{\text{\ref{red:1}}} \xi\zeta_0^2 \cw  \\
	\zeta_0^3\zeta_1\zeta_2 \cw 
		&\xmapsto{\text{\ref{red:5}}} \xi\zeta_2^2 \cxw - e^2\zeta_0^2\zeta_2^2 \cl + e^4\zeta_0\zeta_2 \\
		&\xmapsto{\text{\ref{red:4}}} \xi\zeta_0^2 \cw  + e^2\xi \cxl 
			 - e^2\zeta_0^2\zeta_2^2 \cl + e^4\zeta_0\zeta_2 \\
		&\xmapsto{\text{\ref{red:3}}} \xi\zeta_0^2 \cw 
\end{align*}

\item \ref{red:1} and \ref{red:6}:
\begin{align*}
	\zeta_0^4\zeta_1\zeta_2 \cl\cw 
		&\xmapsto{\text{\ref{red:1}}} \xi\zeta_0^3 \cl\cw  \\
	\zeta_0^4\zeta_1\zeta_2 \cl\cw 
		&\xmapsto{\text{\ref{red:6}}} \xi\zeta_1\zeta_2 \cxl \cxw 
			+ e^2\zeta_0^2\zeta_1\zeta_2 \cl \cxl  - e^2\zeta_0\zeta_1\zeta_2^2 \cxw \\
		&\xmapsto{\text{\ref{red:1}}} \xi\zeta_1\zeta_2 \cxl \cxw 
			+ e^2\xi\zeta_0 \cl \cxl  - e^2\xi\zeta_2 \cxw \\
		&\xmapsto{\text{\ref{red:2}}} \xi\zeta_0\zeta_2^2 \cl\cxw  
			+ e^2\xi\zeta_0 \cl \cxl  \\
		&\xmapsto{\text{\ref{red:4}}} \xi\zeta_0^3 \cl\cw 
\end{align*}

\item \ref{red:2} and \ref{red:3}: In this case, the greatest common divisor is 1, so the resolution is immediate.

\item \ref{red:2} and \ref{red:4}: The greatest common divisor is 1.

\item \ref{red:2} and \ref{red:5}:
\begin{align*}
	\zeta_0^3\zeta_1 \cxl \cw 
		&\xmapsto{\text{\ref{red:2}}} (1-\kappa)\zeta_0^4\zeta_2 \cl\cw  + e^2\zeta_0^3 \cw  \\
		&\xmapsto{\text{\ref{red:6}}} \xi\zeta_2 \cxl \cxw - e^2\zeta_0^2\zeta_2 \cl \cxl 
			+ e^2\zeta_0\zeta_2^2 \cxw + e^2\zeta_0^3 \cw  \\
		&\xmapsto{\text{\ref{red:4}}} \xi\zeta_2 \cxl \cxw - e^2\zeta_0^2\zeta_2 \cl \cxl 
			+ e^4\zeta_0 \cxl  \\
	\zeta_0^3\zeta_1 \cxl \cw 
		&\xmapsto{\text{\ref{red:5}}} \xi\zeta_2 \cxl \cxw 
			- e^2\zeta_0^2\zeta_2 \cl \cxl  + e^4\zeta_0 \cxl 
\end{align*}

\item \ref{red:2} and \ref{red:6}: The greatest common divisor is 1.

\item \ref{red:3} and \ref{red:4}: 
\begin{align*}
	\zeta_0^2\zeta_2^2 \cl\cxw
		&\xmapsto{\text{\ref{red:3}}} \xi \cxl \cxw + e^2\zeta_0\zeta_2 \cxw \\
	\zeta_0^2\zeta_2^2 \cl\cxw
		&\xmapsto{\text{\ref{red:4}}} (1-\kappa)\zeta_0^4 \cl\cw  + e^2\zeta_0^2 \cl \cxl  \\
		&\xmapsto{\text{\ref{red:6}}} \xi \cxl \cxw + e^2\zeta_0\zeta_2 \cxw
\end{align*}

\item \ref{red:3} and \ref{red:5}: 
\begin{align*}
	\zeta_0^3\zeta_1\zeta_2^2 \cl\cw 
		&\xmapsto{\text{\ref{red:3}}} \xi\zeta_0\zeta_1 \cxl \cw  + e^2\zeta_0^2\zeta_1\zeta_2 \cw  \\
		&\xmapsto{\text{\ref{red:1}}} \xi\zeta_0\zeta_1 \cxl \cw  + e^2\xi\zeta_0 \cw  \\
		&\xmapsto{\text{\ref{red:2}}} \xi\zeta_0^2\zeta_2 \cl\cw  \\
	\zeta_0^3\zeta_1\zeta_2^2 \cl\cw 
		&\xmapsto{\text{\ref{red:5}}} \xi\zeta_2^3 \cl\cxw - e^2\zeta_0^2\zeta_2^3 \cl^2
			+ e^4\zeta_0\zeta_2^2 \cl \\
		&\xmapsto{\text{\ref{red:4}}} \xi\zeta_0^2\zeta_2 \cl\cw  + e^2\xi\zeta_2 \cl \cxl 
			- e^2\zeta_0^2\zeta_2^3 \cl^2 + e^4\zeta_0\zeta_2^2 \cl \\
		&\xmapsto{\text{\ref{red:3}}} \xi\zeta_0^2\zeta_2 \cl\cw  
\end{align*}

\item \ref{red:3} and \ref{red:6}: 
\begin{align*}
	\zeta_0^4\zeta_2^2 \cl\cw 
		&\xmapsto{\text{\ref{red:3}}} \xi\zeta_0^2 \cxl \cw  + e^2\zeta_0^3\zeta_2 \cw  \\
	 \zeta_0^4\zeta_2^2 \cl\cw 
		&\xmapsto{\text{\ref{red:6}}} \xi\zeta_2^2 \cxl \cxw + e^2\zeta_0^2\zeta_2^2 \cl \cxl 
			- e^2\zeta_0\zeta_2^3 \cxw \\
		&\xmapsto{\text{\ref{red:4}}} \xi\zeta_0^2 \cxl \cw  + e^2\xi \cxl ^2
			+ e^2\zeta_0^2\zeta_2^2 \cl \cxl 
			+ e^2\zeta_0^3\zeta_2 \cw  - e^4\zeta_0\zeta_2 \cxl  \\
		&\xmapsto{\text{\ref{red:3}}} \xi\zeta_0^2 \cxl \cw  + e^2\zeta_0^3\zeta_2 \cw 
\end{align*}

\item \ref{red:4} and \ref{red:5}: The greatest common divisor is 1.

\item \ref{red:4} and \ref{red:6}: The greatest common divisor is 1.

\item \ref{red:5} and \ref{red:6}:
\begin{align*}
	\zeta_0^4\zeta_1 \cl\cw 
		&\xmapsto{\text{\ref{red:5}}} \xi\zeta_0\zeta_2 \cl\cxw - e^2\zeta_0^3\zeta_2 \cl^2
			+ e^4\zeta_0^2 \cl \\
	\zeta_0^4\zeta_1 \cl\cw 
		&\xmapsto{\text{\ref{red:6}}} \xi\zeta_1 \cxl \cxw + e^2\zeta_0^2\zeta_1 \cl \cxl 
			- e^2\zeta_0\zeta_1\zeta_2 \cxw \\
		&\xmapsto{\text{\ref{red:1}}} \xi\zeta_1 \cxl \cxw + e^2\zeta_0^2\zeta_1 \cl \cxl 
			- e^2\xi \cxw \\
		&\xmapsto{\text{\ref{red:2}}} \xi\zeta_0\zeta_2 \cl\cxw
			- e^2\zeta_0^3\zeta_2 \cl^2 + e^4\zeta_0^2 \cl
\end{align*}

\end{itemize}
This resolves all of the ambiguities of the reduction system, completing the proof of
Proposition~\ref{prop:freeness}.

\bibliography{Bibliography}{}
\bibliographystyle{amsplain}

\end{document}